\newlength{\standardunitlength}
\newcommand{\bea}{\begin{eqnarray}}
\newcommand{\ena}{\end{eqnarray}}
\newcommand{\beas}{\begin{eqnarray*}}
\newcommand{\enas}{\end{eqnarray*}}
\newcommand{\ignore}[1]{}
\newtheorem{prop}{Proposition}[section]
\newtheorem{lemma}[prop]{Lemma}
\newtheorem{cor}[prop]{Corollary}
\newtheorem{theorem}[prop]{Theorem}
\begin{document}

\title [The number of involutions] {Asymptotics of the number of involutions in finite classical groups}

\author{Jason Fulman}
\address{Department of Mathematics\\
University of Southern California\\
Los Angeles, CA 90089-2532} \email{fulman@usc.edu}

\author{Robert Guralnick}
\address{Department of Mathematics\\
University of Southern California\\
Los Angeles, CA 90089-2532} \email{guralnic@usc.edu}

\author{Dennis Stanton}
\address{School of Mathematics\\
University of Minnesota\\
Minneapolis, MN 55455} \email{stanton@math.umn.edu}

\keywords{involution, finite classical group}

\date{Submitted February 23, 2016; revised February 18, 2017}

\thanks{{\it 2010 AMS Subject Classification}: 05E99, 20G40}

\begin{abstract} Answering a question of Geoff Robinson, we compute the large $n$
limiting proportion of $i_{GL}(n,q)/q^{\lfloor n^2/2 \rfloor}$, where $i_{GL}(n,q)$
denotes the number of
involutions in $GL(n,q)$. We give similar results for the finite unitary,
symplectic, and orthogonal groups, in both odd and even characteristic.
At the heart of this work are certain new ``sum=product" identities. Our self-contained
treatment of the enumeration of involutions in even characteristic symplectic and
orthogonal groups may also be of interest.
\end{abstract}

\maketitle

\section{Introduction}

On math overflow, Dima Pasechnik asked about the asymptotic behavior of the number of involutions
in $GL(n,2)$, as $n \rightarrow \infty$. Again on math overflow, Geoff Robinson observed that for $n$ even, the
number of involutions in $GL(n,2)$ is ``unreasonably close'' to the estimate $2^{n^2/2}$. In an email to us in October
2015, Robinson asked whether for $n$ even we could compute the limiting value of $i_{GL}(n,2)/2^{n^2/2}$, where $i_{GL}(n,2)$
is the number of involutions in $GL(n,2)$, and asserted that he could show that this limiting value should be between
$3/4$ and something probably larger than $2$.

Letting $i_{GL}(n,q)$ denote the number of involutions in $GL(n,q)$, we are able to compute the more general limiting
proportion $i_{GL}(n,q)/q^{\lfloor n^2/2 \rfloor}$. The answer depends on whether $n$ is even or odd and on whether $q$
is even or odd. For example if $n$ is even, and if $q$ is even and fixed, it is shown in Section \ref{GL} that

\begin{eqnarray*}
\lim_{n \rightarrow \infty} \frac{i_{GL}(n,q)}{q^{n^2/2}} & = & \frac{1}{2} \left[ \prod_{i \geq 1} (1+\sqrt{q}/q^i) +
 \prod_{i \geq 1} (1-\sqrt{q}/q^i) \right] \\
 & = & \prod_{i\ge 1} \frac{(1+q^{5-8i})(1+q^{3-8i})(1-q^{-8i})}
{(1-q^{-2i})}.
\end{eqnarray*} When $q=2$ this is $1.6793..$, which is indeed between $3/4$ and something probably larger than $2$.
The reason for dividing by $q^{n^2/2}$ is that for $n$ even, $n^2/2$ is the dimension (in the algebraic group)
of the largest conjugacy class of involutions. Similarly when $n$ is odd, one divides by $q^{(n^2-1)/2}$.
Our asymptotic results are cited by Ginzburg and Pasechnik in their recent work on random chain complexes \cite{GP}.

At the heart of these results for $GL(n,q)$ are the following ``sum=product" identities, proved in \cite{FV}.
These identities state that:

\[  \sum_{n \geq 0} u^n q^{{n \choose 2}} \sum_{r=0}^{\lfloor n/2 \rfloor}
\frac{1}{q^{r(2n-3r)} |GL(r,q)| |GL(n-2r,q)|}  = \frac{\prod_{i \geq 1} (1+u/q^i)}{\prod_{i \geq 1} (1-u^2/q^i)}.\]

\[ \sum_{n \geq 0} u^n q^{{n \choose 2}} \sum_{r=0}^{n}
\frac{1}{|GL(r,q)| |GL(n-r,q)|} = \frac{\prod_{i \geq 1} (1+u/q^i)^2}{\prod_{i \geq 1} (1-u^2/q^i)} .\]

The first identity is useful for even characteristic, and the second identity is useful for odd characteristic.
We give new proofs of these identities, and derive analogous identities for the other finite
classical groups, which we believe to be interesting and new. The proof of the even characteristic
symplectic identity is tricky, as are the orthogonal group identities.

The point of these identities is that it allows us to obtain product formulae for the generating functions for proportions of
involutions in finite classical groups. These product formulae are perfectly suited for asymptotic analysis, using
the following elementary lemma of Darboux (given an exposition in \cite{O}).

\begin{lemma} \label{poles} Suppose that $f(u)$ is analytic for $|u|<r, r>0$ and has a finite number of simple poles on $|u|=r$.
Let $\{ w_j \}$ denote the poles, and suppose that $f(u)=\sum_j \frac{g_j(u)}{1-u/w_j}$ with $g_j(u)$ analytic near $w_j$.
Then the coefficient of $u^n$ in the Taylor expansion of $f(u)$ around $0$ is
\[ \sum_j \frac{g_j(w_j)}{w_j^n} + o(1/r^n) .\]
\end{lemma}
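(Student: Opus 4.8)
The plan is to reduce to the trivial case by subtracting off the principal parts at the boundary poles, leaving a function whose power series converges in a strictly larger disk; the two stated contributions then come from a geometric series and from a Cauchy estimate.

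First I would set
\[ \tilde f(u) := f(u) - \sum_j \frac{g_j(w_j)}{1 - u/w_j} \]
(a function analytic on $\{|u|<r\}$, since each subtracted term has its only pole at the boundary point $w_j$) and show that $\tilde f$ extends analytically to $\{|u| < r'\}$ for some $r' > r$. The point is a local computation at each pole $w_k$: from the hypothesis $f(u) = \sum_j g_j(u)/(1-u/w_j)$ with $g_j$ analytic near $w_j$, the only non-analytic summand of $f$ near $u=w_k$ is $g_k(u)/(1-u/w_k)$, and
\[ \frac{g_k(u)}{1-u/w_k} - \frac{g_k(w_k)}{1 - u/w_k} = \frac{g_k(u) - g_k(w_k)}{1 - u/w_k} \]
is analytic at $w_k$, because the numerator vanishes at $u=w_k$ and hence cancels the simple pole of $1/(1-u/w_k)$; the remaining subtracted terms $g_j(w_j)/(1-u/w_j)$ with $j \ne k$ are analytic at $w_k$. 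So each $w_k$ is a removable singularity of $\tilde f$. Since $\tilde f$ is analytic on $\{|u| < r\}$ and extends analytically across each of the finitely many points of the circle $|u| = r$, it is analytic on an open set containing the compact set $\{|u| \le r\}$ — here the finiteness of the pole set is what lets us conclude — hence on some disk $\{|u| < r'\}$ with $r' > r$.

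Next I would read off Taylor coefficients. Expanding geometric series gives $[u^n]\, g_j(w_j)/(1-u/w_j) = g_j(w_j) w_j^{-n}$. Since $\tilde f$ is analytic on $\{|u| < r'\}$, fixing $r''$ with $r < r'' < r'$ and applying Cauchy's estimate on $|u| = r''$ yields, for its coefficients $\tilde a_n$,
\[ |\tilde a_n| \le \frac{\max_{|u| = r''}|\tilde f(u)|}{(r'')^n} = O\!\left((r'')^{-n}\right) = o(r^{-n}), \]
the last step because $r/r'' < 1$. Summing the two contributions gives $[u^n] f(u) = \sum_j g_j(w_j) w_j^{-n} + o(1/r^n)$. (Equivalently, one can write $[u^n]f = \frac{1}{2\pi i}\oint_{|u|=\rho} f(u) u^{-n-1}\,du$ for $\rho < r$, push the contour out to radius $r''$, and collect the residues $-g_j(w_j) w_j^{-n}$ at the simple poles $w_j$.)

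The content here is modest: this is a standard instance of Darboux's method and no step is hard. Where one should be careful is (i) the reading of the hypothesis "$f(u)=\sum_j g_j(u)/(1-u/w_j)$ with $g_j$ analytic near $w_j$" — all that is needed, and all that one extracts, is the local statement that $f(u) - g_k(u)/(1-u/w_k)$ is analytic near each $w_k$; (ii) the use of compactness of the circle $|u|=r$ to pass from "removable at each $w_j$" to "analytic on a strictly larger disk"; and (iii) the distinction between $O(1/r^n)$ and the stated $o(1/r^n)$, which is exactly why one estimates on a circle of radius $r''$ strictly between $r$ and $r'$ rather than on $|u|=r$ itself.
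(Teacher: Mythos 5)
Your proof is correct, and it is the standard argument (subtract the principal parts $\sum_j g_j(w_j)/(1-u/w_j)$, observe the difference continues to a disk of radius $r'>r$, read off the geometric-series coefficients, and bound the remainder by a Cauchy estimate on a circle of radius $r''$ with $r<r''<r'$, which is exactly what upgrades $O(1/r^n)$ to $o(1/r^n)$). There is nothing in the paper to compare it against: the authors state this as ``an elementary lemma of Darboux'' and cite Odlyzko's survey for an exposition rather than proving it, so your write-up genuinely fills in the omitted argument; the equivalent contour-pushing version you mention in parentheses is the form in which Darboux's method is usually presented in that reference. The one point that deserves the emphasis you give it is the reading of the hypotheses: as literally stated, ``$f$ has a finite number of simple poles on $|u|=r$'' together with the local decomposition only controls $f$ near the $w_j$, and to conclude that $\tilde f$ lives on a strictly larger disk one must also take the hypothesis to mean that $f$ continues analytically across every non-pole point of the circle (otherwise only $\limsup |\tilde a_n|^{1/n}\le 1/r$ is available, which does not give $o(r^{-n})$). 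You flag this correctly; in all of the paper's applications $f$ is a ratio of infinite products that is meromorphic on a disk of radius strictly larger than $1$, so the stronger reading is automatic there.
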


For each of the families of groups we consider, we get an infinite product formula for the limiting value
of $i(n,q)/q^{d(n,q)}$ for $q$ fixed and $n \rightarrow \infty$. Here $d(n,q)$ is the dimension of the
variety of involutions in the corresponding algebraic group. It turns out that the limit
of this formula as $q \rightarrow \infty$
is precisely the number of components of maximal dimension in the variety (in every case this is either $1$
or $2$).

This paper is organized as follows. Section \ref{Identities} derives many ``sum=product" identities which
are crucial to our work. The remaining sections study asymptotics of the number of involutions.
Section \ref{GL} treats the general linear groups, and as an easy corollary, the unitary groups.
Section \ref{Sp} treats the symplectic groups, and Section \ref{O} treats the orthogonal groups.
We mention that Sections \ref{Sp} and \ref{O} give self-contained derivations of the number of involutions
in the symplectic and orthogonal groups. While this is easy in odd characteristic, the enumeration of
involutions in even characteristic is subtle (see \cite{AS}, \cite{D1}, \cite{D2}, \cite{LS} for related results).
Our treatment of involutions is used in a recent article of Garibaldi and Guralnick \cite{GG}.

To close the introduction, we make three remarks about future work. First, it would be interesting to find proofs of
our results which do not require generating functions. Second, our methods will work for the subgroups
$SO$ and $\Omega$ of the orthogonal groups, and should work for the spin groups.
Third, enumeration of involutions is connected
to representation theory via Frobenius-Schur indicators. This is developed for $GL$ and $U$ in \cite{FV},
but much remains to be done for other finite classical groups. For some results for $Sp$ and $O$ (in odd
characteristic), see \cite{V} and the references therein.

\section{Identities} \label{Identities}

This section derives some ``sum=product'' identities which will be crucially applied later in the paper.

Throughout this section (and the rest of this paper) we use the following notation:
$$
(A;q)_n=\prod_{k=0}^{n-1} (1-Aq^k),\quad
(A;q)_\infty=\prod_{k=0}^\infty (1-Aq^k) {\text{ if }} -1<q<1.
$$

Theorem \ref{q-binthm} (the $q$-binomial theorem) will be used throughout. See
page 17 of \cite{A} for a proof.

\begin{theorem}
\label{q-binthm}
If $-1<q<1$ and $|x|<1$, then
$$
\sum_{n=0}^\infty \frac{(A;q)_n}{(q;q)_n} x^n=
\frac{(Ax;q)_\infty}{(x;q)_\infty}.
$$
\end{theorem}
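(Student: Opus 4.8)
The plan is to prove the identity by the classical functional–equation (Cauchy) method. Write $F(x) = (Ax;q)_\infty/(x;q)_\infty$. Since $-1<q<1$, both infinite products converge, and the zeros of the denominator $(x;q)_\infty=\prod_{k\geq 0}(1-xq^k)$ occur at the points $x=q^{-k}$, all of modulus at least $1$; hence $F$ is analytic on the disc $|x|<1$ and admits a Taylor expansion $F(x)=\sum_{n\geq 0}c_n x^n$ there, with $c_0=F(0)=1$. The strategy is to pin down the coefficients $c_n$ via a $q$-difference equation.

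First I would extract that $q$-difference equation. Telescoping the products gives
\[
\frac{F(x)}{F(qx)}=\frac{(Ax;q)_\infty}{(qAx;q)_\infty}\cdot\frac{(qx;q)_\infty}{(x;q)_\infty}=\frac{1-Ax}{1-x},
\]
so that $(1-x)F(x)=(1-Ax)F(qx)$ as analytic functions near $0$. Substituting the Taylor series and comparing the coefficient of $x^n$ on both sides of the relation $\sum_n c_n x^n-\sum_n c_n x^{n+1}=\sum_n c_n q^n x^n-A\sum_n c_n q^n x^{n+1}$ yields, for $n\geq 1$, the recurrence $c_n-c_{n-1}=q^n c_n-Aq^{n-1}c_{n-1}$, i.e. $(1-q^n)c_n=(1-Aq^{n-1})c_{n-1}$.

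Since $|q|<1$ the factor $1-q^n$ never vanishes, so $c_n=\frac{1-Aq^{n-1}}{1-q^n}\,c_{n-1}$, and iterating from $c_0=1$ gives $c_n=\prod_{k=1}^{n}\frac{1-Aq^{k-1}}{1-q^k}=\frac{(A;q)_n}{(q;q)_n}$. This identifies $F(x)=\sum_{n\geq 0}\frac{(A;q)_n}{(q;q)_n}x^n$; and since $c_n/c_{n-1}\to 1$ the series on the right has radius of convergence exactly $1$, so the identity is valid throughout $|x|<1$, as claimed (noting $|x|<1$ is the hypothesis). As an alternative route, one could instead start from the finite $q$-binomial theorem (expansion of $\prod_{k=0}^{N-1}(1+xq^k)$) and pass to a limit, but the functional-equation argument is the most self-contained.

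The only genuinely delicate points are bookkeeping: verifying that $F$ is analytic at $0$ so that a power series exists and coefficient comparison is legitimate, and checking that the recurrence can be solved without dividing by zero — both secured by $-1<q<1$. I expect no serious obstacle; the one place to be careful is the telescoping identity for the infinite products and the index shift when equating coefficients of $x^n$.
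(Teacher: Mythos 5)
Your proof is correct. The paper does not prove this statement itself---it simply cites Andrews (p.~17 of \cite{A})---and the functional-equation argument you give, deriving $(1-x)F(x)=(1-Ax)F(qx)$ and solving the resulting coefficient recurrence, is essentially the standard proof found there (Andrews runs the same $q$-difference equation starting from the series side rather than the product side, but the content is identical). The only microscopic quibble is your parenthetical claim that the radius of convergence is \emph{exactly} $1$: if $A=q^{-m}$ for a non-negative integer $m$ the series terminates, but this is irrelevant to the validity of the identity on $|x|<1$.
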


The following two well known special cases are used.

\begin{cor}
\label{cor1}
If $-1<q<1$ and $|x|<1$, then
$$
\sum_{n=0}^\infty \frac{x^n}{(q;q)_n}=
\frac{1}{(x;q)_\infty}.
$$
\end{cor}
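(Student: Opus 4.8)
The plan is to obtain Corollary \ref{cor1} as the immediate $A=0$ specialization of the $q$-binomial theorem (Theorem \ref{q-binthm}). First I would observe that the hypotheses $-1<q<1$ and $|x|<1$ are precisely those under which Theorem \ref{q-binthm} is valid, so no new convergence issues arise: both sides are the relevant specializations of expressions already known to be analytic and equal in this range.

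Next I would evaluate the two $q$-Pochhammer symbols at $A=0$. On the left, $(0;q)_n=\prod_{k=0}^{n-1}(1-0\cdot q^k)=1$ for every $n\ge 0$ (the empty product when $n=0$, and each factor equal to $1$ otherwise), so the summand $\frac{(A;q)_n}{(q;q)_n}x^n$ becomes $\frac{x^n}{(q;q)_n}$. On the right, $(Ax;q)_\infty=(0;q)_\infty=\prod_{k=0}^\infty(1-0\cdot q^k)=1$, so $\frac{(Ax;q)_\infty}{(x;q)_\infty}$ becomes $\frac{1}{(x;q)_\infty}$. Substituting these back into Theorem \ref{q-binthm} yields the claimed identity.

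There is essentially no obstacle here; the only point worth a sentence is the justification that setting $A=0$ is legitimate, i.e.\ that the identity of Theorem \ref{q-binthm} holds as an identity of analytic functions (indeed, as an identity of formal power series in $x$ with coefficients rational in $q$), so we may freely specialize the parameter $A$. If one preferred a self-contained argument not invoking the full $q$-binomial theorem, an alternative would be to set $F(x)=\sum_{n\ge0}x^n/(q;q)_n$, verify the functional equation $(1-x)F(x)=F(qx)$ by comparing coefficients of $x^n$ (using $(q;q)_n=(1-q^n)(q;q)_{n-1}$), note that $1/(x;q)_\infty$ satisfies the same equation with the same value $1$ at $x=0$, and iterate; but the one-line specialization above is the natural route given that Theorem \ref{q-binthm} is already available.
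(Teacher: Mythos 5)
Your proof is correct and is exactly the paper's argument: the paper's proof is simply ``Set $A=0$ in Theorem \ref{q-binthm},'' and your proposal carries out this specialization (with the evaluations $(0;q)_n=1$ and $(0;q)_\infty=1$ spelled out explicitly).
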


\begin{proof} Set $A=0$ in Theorem \ref{q-binthm}.
\end{proof}

\begin{cor}
\label{cor2}
If $-1<q<1$, then
$$
\sum_{n=0}^\infty \frac{x^n q^{\binom{n}{2}}}{(q;q)_n}=
(-x;q)_\infty.
$$
\end{cor}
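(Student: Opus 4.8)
The plan is to derive this as a limiting case of the $q$-binomial theorem (Theorem \ref{q-binthm}), exactly as Corollary \ref{cor1} was obtained by the substitution $A=0$. Here the relevant substitution is the opposite extreme: I would replace $x$ by $x/A$ and then let $A\to\infty$ (equivalently, let $1/A\to 0$). Under this substitution the right-hand side $(Ax;q)_\infty/(x;q)_\infty$ becomes $(x;q)_\infty/(x/A;q)_\infty$, whose denominator tends to $1$, leaving $(x;q)_\infty$; but a sign bookkeeping shows it is cleaner to send $x\mapsto -x/A$, so that the right-hand side becomes $(-x;q)_\infty/(-x/A;q)_\infty \to (-x;q)_\infty$, matching the claimed answer.

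The main work is then to track what happens to the general term on the left. With $x$ replaced by $-x/A$, the summand is
\[
\frac{(A;q)_n}{(q;q)_n}\,\frac{(-x)^n}{A^n}
= \frac{(-x)^n}{(q;q)_n}\cdot\frac{(A;q)_n}{A^n}.
\]
Now $(A;q)_n = \prod_{k=0}^{n-1}(1-Aq^k)$, and factoring $-Aq^k$ out of each of the $n$ factors gives
\[
(A;q)_n = (-A)^n q^{0+1+\cdots+(n-1)} \prod_{k=0}^{n-1}\bigl(1 - q^{-k}/A\bigr)
= (-A)^n q^{\binom{n}{2}} \prod_{k=0}^{n-1}\bigl(1 - q^{-k}/A\bigr).
\]
Hence $(A;q)_n/A^n = (-1)^n q^{\binom{n}{2}}\prod_{k=0}^{n-1}(1-q^{-k}/A)$, which tends to $(-1)^n q^{\binom{n}{2}}$ as $A\to\infty$ (the product is a fixed finite product of factors each tending to $1$). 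Therefore the $n$-th term of the series tends to
\[
\frac{(-x)^n}{(q;q)_n}\cdot(-1)^n q^{\binom{n}{2}} = \frac{x^n q^{\binom{n}{2}}}{(q;q)_n},
\]
which is precisely the summand on the left-hand side of Corollary \ref{cor2}.

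The one genuine obstacle is justifying the interchange of the limit $A\to\infty$ with the infinite sum over $n$. Since $|q|<1$, the factor $q^{\binom{n}{2}}$ forces the terms to decay super-exponentially in $n$, and for $|A|$ large (say $|A|\ge 2$) one has the uniform bound $|1-q^{-k}/A|\le 1 + |q|^{-k}/|A|$; but $k$ ranges only up to $n-1$, so some care is needed to get a bound summable in $n$ uniformly in $A$. I would handle this by noting that for $|A| \ge |q|^{-N}$ on any fixed compact range and using that $(-x;q)_\infty$ and the series both converge absolutely and define entire functions of $x$ for $|q|<1$; alternatively, one can invoke that both sides are analytic in $1/A$ near $0$ and compare, or simply cite that this is the standard ``$A\to\infty$'' specialization of the $q$-binomial theorem (Euler's identity), as recorded on page 17 of \cite{A}. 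Either route closes the argument; the term-by-term computation above is the substantive content.
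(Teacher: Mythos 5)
Your proposal is correct and is exactly the paper's proof: the authors also obtain Corollary~\ref{cor2} by substituting $x \mapsto -x/A$ in Theorem~\ref{q-binthm} and letting $A \to \infty$, stating this in one line where you have written out the termwise computation and the (routine) justification of the limit interchange.
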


\begin{proof} Replace $x$ by $-x/A$ in Theorem \ref{q-binthm}, and let $A \rightarrow \infty$.
\end{proof}

Lemma \ref{ourmainlemma} is one of our main lemmas.

\begin{lemma}
\label{ourmainlemma}
If $1<|q|,$ and $|ab/q|<1$, then
$$
\begin{aligned}
\sum_{m=0}^\infty q^{\binom{m}{2}}
& \sum_{k=0}^m \frac{(-a)^{m-k} (-b)^k}{(q;q)_k
q^{\binom{k}{2}}(q;q)_{m-k}q^{\binom{m-k}{2}}}\\
=&
\frac{(-a/q;1/q)_\infty(-b/q;1/q)_\infty }{(ab/q;1/q)_\infty}
=H(a,b,q).
\end{aligned}
$$
\end{lemma}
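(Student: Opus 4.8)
The plan is to evaluate the double sum by first performing the inner sum over $k$ for fixed $m$, recognizing it as a $q$-binomial convolution, and then summing the resulting single series using Corollary \ref{cor2}. First I would change variables to work with the base $p = 1/q$, so that $|p| < 1$ and the hypotheses of Theorem \ref{q-binthm} and its corollaries become directly applicable; note that $q^{\binom{m}{2}} = p^{-\binom{m}{2}}$ and $(q;q)_k q^{\binom{k}{2}}$ rewrites cleanly in terms of $(p;p)_k$ up to an explicit power of $p$, using the standard identity $(q;q)_n = (-1)^n q^{\binom{n}{2}} (1/q;1/q)_n$ (equivalently $q^{\binom n 2}(q;q)_n$ has a symmetric form). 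After this bookkeeping, the summand should collapse to something of the shape $\sum_m \sum_k \frac{a^{m-k}b^k}{(p;p)_k (p;p)_{m-k}} \cdot p^{(\text{something quadratic})}$, and the key is that the quadratic exponent factors so that the inner sum over $k$ becomes a finite $q$-binomial sum (a $q$-Vandermonde / $q$-binomial theorem instance), or better, that the whole thing is a product of two independent geometric-type series after an appropriate substitution.

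The cleanest route, which I would try first: interchange the order and set $j = m-k$, so the double sum over $m \geq 0$, $0 \le k \le m$ becomes an unrestricted double sum over $k \geq 0$ and $j \geq 0$. The obstacle is that $q^{\binom{m}{2}} = q^{\binom{j+k}{2}}$ does \emph{not} factor as $q^{\binom j 2} q^{\binom k 2}$ — there is a cross term $q^{jk}$. So the sum is
\[
\sum_{j,k \geq 0} \frac{(-a)^j (-b)^k \, q^{\binom j 2 + \binom k 2 + jk}}{(q;q)_k\, q^{\binom k 2}\, (q;q)_j\, q^{\binom j 2}}
= \sum_{j,k \geq 0} \frac{(-a)^j (-b)^k \, q^{jk}}{(q;q)_j\, (q;q)_k}.
\]
Now the cross term $q^{jk}$ is exactly what Corollary \ref{cor2} is built to absorb: fix $k$, and sum over $j$ the series $\sum_j \frac{(-a q^k)^j}{(q;q)_j}$ — wait, that has no $q^{\binom j 2}$, so it is Corollary \ref{cor1}, giving $1/(-aq^k; ?)$... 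I need to be careful about which base. Since $|q| > 1$ here, the convergent series is in $1/q$; so I would instead write $q^{jk}$ and recognize, after replacing $q \to 1/q$ throughout, that $\sum_j \frac{x^j q^{\binom j 2}}{(q;q)_j} = (-x;q)_\infty$ applies to the $j$-sum once the cross term is distributed. Concretely: I expect that after the substitution the inner sum over $j$ yields a factor $(-a p^{\,\cdot}; p)_\infty$-type product with a $k$-dependent argument, and then the outer sum over $k$ is again a $q$-binomial-theorem sum (now with the nontrivial numerator $(A;q)_n$, i.e. Theorem \ref{q-binthm} proper rather than a corollary), producing the ratio $\frac{(-a/q;1/q)_\infty (-b/q;1/q)_\infty}{(ab/q;1/q)_\infty}$.

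The main obstacle is purely the exponent bookkeeping: getting the powers of $q$ (equivalently $p=1/q$) to line up so that the inner sum is \emph{exactly} in the form of Corollary \ref{cor2} (yielding a Pochhammer infinite product) and the outer sum is \emph{exactly} in the form of Theorem \ref{q-binthm} (yielding the quotient of two such products, with the $(ab/q;1/q)_\infty$ in the denominator coming from the $(x;q)_\infty$ in the denominator of the $q$-binomial theorem). I would verify the exponent arithmetic by checking the identity $\binom{j+k}{2} = \binom j 2 + \binom k 2 + jk$ and tracking one boundary case (say $b=0$, where the sum should reduce to $(-a/q;1/q)_\infty$ by Corollary \ref{cor2} directly, giving a consistency check on signs and shifts). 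The convergence hypothesis $|ab/q| < 1$ is precisely the condition $|x| < 1$ needed to apply Theorem \ref{q-binthm} in the final outer summation, which is a good sign the strategy is the intended one.
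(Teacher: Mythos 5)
Your proposal is correct and follows essentially the same route as the paper: reindex via $j=m-k$ to get the unrestricted double sum $\sum_{j,k}\frac{(-a)^j(-b)^kq^{jk}}{(q;q)_j(q;q)_k}$, pass to the base $Q=1/q$, evaluate the inner sum by Corollary \ref{cor2} to an infinite product with a $k$-dependent argument, and then recognize the outer sum as an instance of Theorem \ref{q-binthm} with $x=abQ$, which is exactly where the hypothesis $|ab/q|<1$ enters. The only step you leave as ``bookkeeping'' --- rewriting $(-aQ^{1-k};Q)_\infty$ as $(-1/a;Q)_k\,(-aQ;Q)_\infty\,a^kQ^{-\binom{k}{2}}$ so the outer sum takes the form $\sum_k\frac{(-1/a;Q)_k}{(Q;Q)_k}(abQ)^k$ --- is precisely what the paper's proof does.
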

\begin{proof} Let $Q=1/q.$ Replacing $m$ by $m+k,$ the double sum is
\[
D=\sum_{k=0}^\infty \sum_{m=0}^\infty
\frac{a^{m}b^k}{(Q;Q)_k(Q;Q)_m} Q^{\binom{k+1}{2}+\binom{m+1}{2}-mk}.
\]
The $m$-sum is evaluable by Corollary~\ref{cor2} to
\begin{eqnarray*}
(-aQ^{1-k};Q)_\infty & = & (-aQ^{1-k};Q)_k (-aQ;Q)_\infty \\
& = & (-1/a;Q)_k (-aQ;Q)_\infty a^k Q^{-\binom{k}{2}}.
\end{eqnarray*}
So
$$
D= (-aQ;Q)_\infty \sum_{k=0}^\infty \frac{(-1/a;Q)_k}{(Q;Q)_k} (abQ)^k=
 \frac{(-aQ;Q)_\infty(-bQ;Q)_\infty }{(abQ;Q)_\infty}
$$
by Theorem~\ref{q-binthm}.
\end{proof}

There is a companion lemma which is also useful.

\begin{lemma}
\label{ourmainlemma2}
Suppose that $|q|>1,$ that $s$ is a non-negative even integer, and that
$t$ is a non-negative integer. Let $a,b$ be complex and suppose that
$|bq^{-s/2}|<1$. Then
$$
\begin{aligned}
\sum_{n=0}^\infty q^{\binom{n}{2}}&
\sum_{r=0}^{\lfloor (n-t)/s \rfloor} \frac{a^{n-sr} b^r}{q^{r^2}(1/q;1/q)_r
q^{(n-sr-t)^2}(1/q;1/q)_{n-rs-t}q^{r(sn-(1+s^2/2)r)}}\\
=&
a^tq^{\binom{t}{2}}\frac{(-aq^{t-1};1/q)_\infty}{(bq^{-s/2};1/q)_\infty}
=G(a,b,q,s,t).
\end{aligned}
$$
\end{lemma}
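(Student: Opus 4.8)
The plan is to follow the template of the proof of Lemma~\ref{ourmainlemma}: pass to the base $Q=1/q$ (so $|Q|<1$), decouple the double sum by a change of the outer summation variable, and recognize the two resulting single sums as instances of Corollaries~\ref{cor1} and~\ref{cor2}.

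First I would introduce $m=n-sr-t$, the index already appearing in $(1/q;1/q)_{n-rs-t}$; then $m\ge 0$, the constraint $0\le r\le\lfloor(n-t)/s\rfloor$ becomes simply $r,m\ge 0$, and $n=sr+t+m$ sets up a bijection between the old and new ranges of summation. Writing $(Q;Q)_k=(1/q;1/q)_k$, the double sum becomes
\[
\sum_{r\ge 0}\sum_{m\ge 0}\frac{a^{m+t}b^r}{(Q;Q)_r(Q;Q)_m}\,q^{E(r,m)},\qquad
E(r,m)=\binom{sr+t+m}{2}-r^2-m^2-r\!\left(s(sr+t+m)-(1+s^2/2)r\right).
\]

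The crux — and the step I expect to be the only real obstacle — is the bookkeeping for $E(r,m)$. Expanding $\binom{sr+t+m}{2}$ and collecting by monomials, the $r^2$-terms cancel identically (this is the point of the seemingly artificial exponent $r(sn-(1+s^2/2)r)$), and so do the cross-terms in $rt$ and in $rm$; what remains is
\[
E(r,m)=\binom{t}{2}+tm-\binom{m+1}{2}-\frac{sr}{2},
\]
where evenness of $s$ ensures $sr/2\in\mathbb{Z}$. Using $\binom{m+1}{2}-tm=\binom{m}{2}+m(1-t)$ together with $Q^{1-t}=q^{t-1}$ and $Q^{s/2}=q^{-s/2}$, the double sum factors as
\[
a^{t}q^{\binom{t}{2}}\left(\sum_{r\ge 0}\frac{\bigl(bq^{-s/2}\bigr)^r}{(Q;Q)_r}\right)\!\left(\sum_{m\ge 0}\frac{\bigl(aq^{t-1}\bigr)^m Q^{\binom{m}{2}}}{(Q;Q)_m}\right).
\]

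Finally I would evaluate the two factors: Corollary~\ref{cor1} gives the $r$-sum as $1/(bq^{-s/2};1/q)_\infty$, where the hypothesis $|bq^{-s/2}|<1$ is exactly what is needed for convergence (and, via absolute convergence of the full double sum, legitimizes the interchange of summations); Corollary~\ref{cor2} gives the $m$-sum as $(-aq^{t-1};1/q)_\infty$ with no further restriction. Multiplying by the prefactor $a^{t}q^{\binom{t}{2}}$ yields $G(a,b,q,s,t)$, as claimed. Apart from verifying the cancellations in $E(r,m)$, every step is a direct quotation of the corollaries, mirroring the proof of Lemma~\ref{ourmainlemma}.
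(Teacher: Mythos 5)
Your proof is correct and takes essentially the same route as the paper's: the substitution $m=n-sr-t$ (the paper phrases it as replacing $n$ by $n+rs+t$) decouples the double sum, and the two resulting single sums are evaluated by Corollaries~\ref{cor1} and~\ref{cor2}, exactly as you do. Your exponent bookkeeping $E(r,m)=\binom{t}{2}+tm-\binom{m+1}{2}-sr/2$ is the computation the paper leaves implicit, and it checks out.
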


\begin{proof} Replacing $n$ by $n+rs+t$ one obtains
$$
G(a,b,q,s,t)= a^tq^{\binom{t}{2}}
\sum_{r=0}^\infty \frac{\left(bq^{-s/2}\right)^r}{(1/q;1/q)_r}
\sum_{n=0}^\infty \frac{q^{-\binom{n}{2}}\left(aq^{t-1}\right)^n}{(1/q;1/q)_n}.
$$
Then apply Corollaries~\ref{cor1} and ~\ref{cor2}.
\end{proof}

Proposition \ref{sieveJTP} will be useful for writing certain sums and
differences of infinite products as a single infinite product.

\begin{prop}
\label{sieveJTP}
Let $-1<R<1$ and
$$
F(X,R)=(R;R)_\infty (-X;R)_\infty (-R/X;R)_\infty.
$$
Then
$$
\begin{aligned}
\frac{1}{2}\left( F(X,R)+F(-X,R)\right)=& F(RX^2,R^4),\\
\frac{1}{2}\left( F(X,R)-F(-X,R)\right)=& XF(R^3X^2,R^4).
\end{aligned}
$$
\end{prop}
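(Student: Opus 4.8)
The plan is to recognize $F(X,R)$ as (up to normalization) the classical Jacobi triple product, and then to extract even-index terms. By the Jacobi triple product identity,
\[
F(X,R) = (R;R)_\infty (-X;R)_\infty (-R/X;R)_\infty = \sum_{k=-\infty}^{\infty} X^k R^{\binom{k}{2}},
\]
where I am using $\binom{k}{2} = k(k-1)/2$ extended to all integers $k \in \mathbb{Z}$ in the usual way (so $R^{\binom{k}{2}}$ is the coefficient attached to $X^k$). First I would verify the form of the exponent against the normalization in the statement; the key point is simply that $F(X,R)$ has a Laurent expansion $\sum_k c_k X^k$ with $c_k = R^{\binom{k}{2}}$ and, crucially, $c_{k}/c_{k-1}$ behaving so that the series is the stated product.

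Next I would apply the elementary device of separating a Laurent series into its even and odd parts:
\[
\frac{1}{2}\bigl(F(X,R) + F(-X,R)\bigr) = \sum_{k \text{ even}} c_k X^k, \qquad
\frac{1}{2}\bigl(F(X,R) - F(-X,R)\bigr) = \sum_{k \text{ odd}} c_k X^k.
\]
For the even part I substitute $k = 2m$, getting $\sum_{m} R^{\binom{2m}{2}} X^{2m} = \sum_m R^{m(2m-1)} X^{2m}$. I then want to match this against $F(RX^2, R^4) = \sum_m (RX^2)^m (R^4)^{\binom{m}{2}} = \sum_m R^{m} X^{2m} R^{2m(m-1)} = \sum_m R^{2m^2 - m} X^{2m}$, and indeed $m(2m-1) = 2m^2 - m$, so the two series agree term by term. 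For the odd part, substitute $k = 2m+1$: $\sum_m R^{\binom{2m+1}{2}} X^{2m+1} = \sum_m R^{m(2m+1)} X^{2m+1} = X \sum_m R^{2m^2+m} X^{2m}$, and compare with $X F(R^3 X^2, R^4) = X \sum_m (R^3 X^2)^m (R^4)^{\binom{m}{2}} = X \sum_m R^{3m} X^{2m} R^{2m^2 - 2m} = X \sum_m R^{2m^2 + m} X^{2m}$, which again matches term by term.

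The only real content beyond bookkeeping is justifying that these manipulations are legitimate: one needs $-1 < R < 1$ so that all the triple-product series converge absolutely (uniformly on compact subsets of $X \neq 0$), which licenses rearranging, splitting into even/odd parts, and re-indexing, and one should note that both sides of each claimed identity are the $F(\cdot,\cdot)$ of parameters with $|R^4| < 1$, so everything stays in the convergent regime. I expect the main obstacle — really the only subtle point — to be getting the exponent arithmetic exactly right under the re-indexing, in particular checking that the convention used for $\binom{k}{2}$ with negative $k$ in the Jacobi triple product matches $F$'s normalization; once the identity $\binom{2m+\epsilon}{2}$ is expanded correctly for $\epsilon \in \{0,1\}$, the two sides collapse onto each other with no further work.
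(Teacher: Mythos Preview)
Your proof is correct and follows essentially the same approach as the paper: invoke the Jacobi triple product identity to write $F(X,R)=\sum_{n\in\mathbb{Z}} R^{\binom{n}{2}}X^n$, then split into even and odd parts and check the exponents match. The paper's proof is a one-line sketch of exactly this argument, and you have simply filled in the straightforward bookkeeping.
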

\begin{proof} Use the Jacobi triple product identity (\cite{A}, p.21)
$$
F(X,R)=\sum_{n=-\infty}^\infty R^{\binom{n}{2}}X^n
$$
to find the even and odd parts of $F(X,R).$
\end{proof}

Theorem \ref{sumprodGLeven} will be useful in treating even
characteristic general linear and unitary groups.
Recall that in any characteristic, $|GL(j,q)|=q^{{j \choose 2}} (q^j-1) \cdots (q-1)$.
Theorem \ref{sumprodGLeven} appeared as Corollary 3.5 of \cite{FV},
but for completeness we give a different proof here.

\begin{theorem}
\label{sumprodGLeven} For $q>1$ and $u^2<q$,
\[  \sum_{n \geq 0} u^n q^{{n \choose 2}} \sum_{r=0}^{\lfloor n/2 \rfloor}
\frac{1}{q^{r(2n-3r)} |GL(r,q)| |GL(n-2r,q)|}  =
\frac{\prod_{i \geq 1} (1+u/q^i)}{\prod_{i \geq 1} (1-u^2/q^i)}.\]
\end{theorem}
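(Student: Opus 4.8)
The plan is to recognize the inner sum over $r$ as a convolution that matches the left-hand side of Lemma~\ref{ourmainlemma2}, and then simplify the resulting product. First I would rewrite everything in terms of $Q = 1/q$ and the $q$-Pochhammer notation. Using $|GL(j,q)| = q^{\binom{j}{2}} (q^j - 1) \cdots (q-1)$, one has $|GL(j,q)| = q^{j^2} (1/q;1/q)_j$ after collecting powers of $q$; so $\tfrac{1}{|GL(r,q)|} = \tfrac{Q^{-r^2}}{(Q;Q)_r}$ (absorbing the sign by noting $(1/q;1/q)_j = \prod_{k=1}^j(1-q^{-k})$ has the same sign conventions throughout). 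The key bookkeeping step is to combine the exponent $\binom{n}{2}$ from the outer sum with the exponents $-r(2n-3r)$, $-\binom{r}{2}$, $-\binom{n-2r}{2}$ coming from the three $|GL|$ factors, and check that the total power of $q$ matches the exponent appearing in Lemma~\ref{ourmainlemma2} with the specialization $s = 2$, $t = 0$.

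With $s=2$, $t=0$, Lemma~\ref{ourmainlemma2} reads
\[
\sum_{n \geq 0} q^{\binom{n}{2}} \sum_{r=0}^{\lfloor n/2 \rfloor}
\frac{a^{n-2r} b^r}{q^{r^2}(1/q;1/q)_r \, q^{(n-2r)^2}(1/q;1/q)_{n-2r} \, q^{r(2n-3r)}}
= \frac{(-a/q;1/q)_\infty}{(bq^{-1};1/q)_\infty},
\]
since $G(a,b,q,2,0) = (-aq^{-1};1/q)_\infty / (bq^{-1};1/q)_\infty$ and the exponent $r(sn - (1+s^2/2)r) = r(2n - 3r)$ is exactly the exponent $r(2n-3r)$ visible in Theorem~\ref{sumprodGLeven}. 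So after the rewriting in the previous paragraph, the left-hand side of the theorem is precisely this sum with $a = u$ and $b = u^2$ (the power $u^n = u^{n-2r} (u^2)^r$ distributes correctly over the two $GL$ blocks and the $r$-block). Hence the left-hand side equals $(-u/q;1/q)_\infty / (u^2/q;1/q)_\infty$.

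It remains to identify $(-u/q;1/q)_\infty / (u^2/q;1/q)_\infty$ with $\prod_{i\geq 1}(1+u/q^i) / \prod_{i\geq 1}(1-u^2/q^i)$. By definition $(-u/q;1/q)_\infty = \prod_{k\geq 0}(1 + (u/q) q^{-k}) = \prod_{i\geq 1}(1+u/q^i)$, and similarly $(u^2/q;1/q)_\infty = \prod_{i\geq 1}(1 - u^2/q^i)$; this is immediate from the definition of $(A;q)_\infty$ stated in the paper, applied with base $1/q$ (which lies in $(-1,1)$ since $q>1$). The convergence hypothesis $|bq^{-s/2}| = |u^2/q| < 1$ needed for Lemma~\ref{ourmainlemma2} is exactly the hypothesis $u^2 < q$ of the theorem, and $|aq^{t-1}| = |u/q| < 1$ follows as well.

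The main obstacle is purely the exponent bookkeeping in the first step: one must carefully expand $\binom{n}{2} - \binom{r}{2} - \binom{n-2r}{2} - r(2n-3r)$ against $r^2 + (n-2r)^2 + r(2n-3r)$ (the exponents in $G$) and verify these agree identically in $n$ and $r$, so that the term-by-term match with Lemma~\ref{ourmainlemma2} is exact rather than merely up to a reindexing. Once that identity of exponents is confirmed, the rest is a direct appeal to Lemma~\ref{ourmainlemma2} and the definition of the infinite $q$-product.
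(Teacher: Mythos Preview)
Your proposal is correct and follows exactly the paper's approach: use $|GL(j,q)| = q^{j^2}(1/q;1/q)_j$ to recognize the double sum as $G(u,u^2,q,2,0)$ from Lemma~\ref{ourmainlemma2}, then read off the product. The exponent bookkeeping you flag as the ``main obstacle'' is in fact immediate once that formula for $|GL(j,q)|$ is used, since $q^{r^2}(1/q;1/q)_r$, $q^{(n-2r)^2}(1/q;1/q)_{n-2r}$, and $q^{r(2n-3r)}$ are already literally the three denominator factors in the theorem (and note a small slip: $1/|GL(r,q)| = Q^{r^2}/(Q;Q)_r$, not $Q^{-r^2}/(Q;Q)_r$, though this does not affect your subsequent argument).
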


\begin{proof} Because for any $r \geq 0$,
\begin{equation}
\label{glrcard}
|GL(r,q)|= q^{r^2}(1/q;1/q)_r,
\end{equation}
we use Lemma~\ref{ourmainlemma2} to evaluate the double sum,
which is $G(u,u^2,q,2,0).$
\end{proof}

Theorem \ref{sumprodGLodd} will be helpful in treating odd characteristic general linear and unitary groups.
This result was Corollary 3.7 of \cite{FV}, but we give a different (and simpler) proof here.

\begin{theorem} For $q>1$ and $u^2<q$,
\label{sumprodGLodd}
\[ \sum_{n \geq 0} u^n q^{{n \choose 2}} \sum_{r=0}^{n}
\frac{1}{|GL(r,q)| |GL(n-r,q)|} =
\frac{\prod_{i \geq 1} (1+u/q^i)^2}{\prod_{i \geq 1} (1-u^2/q^i)}.\]
\end{theorem}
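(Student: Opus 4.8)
The plan is to mimic the proof of Theorem~\ref{sumprodGLeven}, applying the companion Lemma~\ref{ourmainlemma} rather than Lemma~\ref{ourmainlemma2}. First I would substitute the cardinality formula \eqref{glrcard}, namely $|GL(j,q)| = q^{j^2}(1/q;1/q)_j$, into the left-hand side. The inner sum $\sum_{r=0}^n \frac{1}{|GL(r,q)||GL(n-r,q)|}$ becomes $\sum_{r=0}^n \frac{1}{q^{r^2}(1/q;1/q)_r\, q^{(n-r)^2}(1/q;1/q)_{n-r}}$, and the exponent $r^2 + (n-r)^2$ combines with the overall $q^{\binom{n}{2}}$ from the outer sum. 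Writing $Q = 1/q$ and tracking the powers of $q$ carefully, one should find that the double sum matches exactly the left-hand side of Lemma~\ref{ourmainlemma} with a suitable choice of parameters $a,b$.

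The key point is to identify those parameters. The outer variable $u^n$ together with $q^{\binom{n}{2}}$ must distribute into the $(-a)^{m-k}(-b)^k$ shape of Lemma~\ref{ourmainlemma}, where $m$ plays the role of $n$ and $k$ the role of $r$. Comparing the symmetric form of the $GL$-identity (it is visibly symmetric under $r \mapsto n-r$) with the symmetric product $H(a,b,q) = \frac{(-a/q;1/q)_\infty (-b/q;1/q)_\infty}{(ab/q;1/q)_\infty}$, the natural guess is $a = b = u$, giving $H(u,u,q) = \frac{(-u/q;1/q)_\infty^2}{(u^2/q;1/q)_\infty}$. Rewriting $(-u/q;1/q)_\infty = \prod_{i\ge 1}(1 + u/q^i)$ and $(u^2/q;1/q)_\infty = \prod_{i\ge 1}(1 - u^2/q^i)$ then yields precisely the claimed right-hand side $\frac{\prod_{i\ge1}(1+u/q^i)^2}{\prod_{i\ge1}(1-u^2/q^i)}$. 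The hypothesis $u^2 < q$ is exactly what is needed to ensure $|ab/q| = |u^2/q| < 1$, so Lemma~\ref{ourmainlemma} applies.

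The main obstacle I anticipate is purely bookkeeping: verifying that the exponent of $q$ coming from $\binom{n}{2} - r^2 - (n-r)^2$ in the $GL$-sum agrees with the exponent $\binom{m}{2} - \binom{k}{2} - \binom{m-k}{2}$ appearing in Lemma~\ref{ourmainlemma} after the $(q;q)_k q^{\binom{k}{2}}$ factors are cleared. Since $\binom{m}{2} - \binom{k}{2} - \binom{m-k}{2} = k(m-k)$ and similarly $\binom{n}{2} - \tfrac12(r^2+(n-r)^2)$ needs to be reconciled with a factor of $q^{\binom k2}$-type terms, one must be slightly careful about whether the factors $q^{r^2}$ versus $q^{\binom{r}{2}}(q^r-1)\cdots(q-1)$ are being used consistently; using \eqref{glrcard} uniformly avoids this. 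Once the indices are aligned, the identity is immediate from Lemma~\ref{ourmainlemma}, so there is no deeper difficulty — this theorem is genuinely a corollary of the main lemma, just as Theorem~\ref{sumprodGLeven} is a corollary of Lemma~\ref{ourmainlemma2}.

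\begin{proof}[Proof sketch to be completed]
By \eqref{glrcard}, the inner sum equals $\sum_{r=0}^n \frac{1}{q^{r^2 + (n-r)^2}(1/q;1/q)_r (1/q;1/q)_{n-r}}$. Multiplying by $u^n q^{\binom n2}$ and summing over $n$, one checks that the resulting double sum is $H(u,u,q)$ in the notation of Lemma~\ref{ourmainlemma}, whose hypothesis $|u^2/q|<1$ is the assumption $u^2<q$. Since $H(u,u,q) = \frac{(-u/q;1/q)_\infty^2}{(u^2/q;1/q)_\infty} = \frac{\prod_{i\ge1}(1+u/q^i)^2}{\prod_{i\ge1}(1-u^2/q^i)}$, the theorem follows.
\end{proof}
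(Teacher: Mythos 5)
Your proposal is correct and is exactly the paper's proof: substitute $|GL(j,q)| = q^{j^2}(1/q;1/q)_j$ from \eqref{glrcard} and recognize the double sum as $H(u,u,q)$ via Lemma~\ref{ourmainlemma}. Your identification of the parameters $a=b=u$, the role of the hypothesis $u^2<q$ as $|ab/q|<1$, and the rewriting of $H(u,u,q)$ as the stated product are all as in the paper, just spelled out in more detail.
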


\begin{proof} Using \eqref{glrcard}, we can apply Lemma~\ref{ourmainlemma}
to evaluate the double sum,
which is $H(u,u,q).$
\end{proof}

Theorem \ref{prodSp} gives a useful ``sum=product'' formula for the symplectic groups
in odd characteristic. This can also be obtained by replacing $q$ by $q^2$ in
Theorem \ref{sumprodGLodd}.

Recall that in any characteristic, for $n \geq 0$,
\begin{equation}
\label{spcard}
|Sp(2n,q)|=q^{n^2} \prod_{i=1}^n (q^{2i}-1)=q^{2n^2+n}(1/q^2;1/q^2)_n.
\end{equation}

\begin{theorem}
\label{prodSp} For $q>1$ and $|u|<q$,
\[ \sum_{n \geq 0} u^n q^{n^2}
\sum_{r=0}^n \frac{1}{|Sp(2r,q)||Sp(2n-2r,q)|} =
\frac{ \prod_{i \geq 1} (1+u/q^{2i})^2 }{\prod_{i \geq 1} (1-u^2/q^{2i}) } .
\]
\end{theorem}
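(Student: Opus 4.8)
The plan is to reduce Theorem~\ref{prodSp} to Theorem~\ref{sumprodGLodd} via the substitution $q \mapsto q^2$, exactly as the remark preceding the statement suggests. First I would record, from \eqref{spcard}, that $|Sp(2r,q)| = q^{2r^2+r}(1/q^2;1/q^2)_r$, and, from \eqref{glrcard} with $q$ replaced by $q^2$, that $|GL(r,q^2)| = q^{2r^2}(1/q^2;1/q^2)_r$. Dividing these gives the clean relation $|Sp(2r,q)| = q^r\,|GL(r,q^2)|$, and hence $|Sp(2r,q)|\,|Sp(2n-2r,q)| = q^n\,|GL(r,q^2)|\,|GL(n-r,q^2)|$ for all $0 \le r \le n$.

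Substituting this into the left-hand side of the claimed identity, the inner sum over $r$ contributes an overall factor $q^{-n}$, so the left-hand side becomes $\sum_{n \ge 0} u^n q^{n^2-n} \sum_{r=0}^n \frac{1}{|GL(r,q^2)|\,|GL(n-r,q^2)|}$. Since $n^2 - n = 2\binom{n}{2}$, this is precisely the left-hand side of Theorem~\ref{sumprodGLodd} with $q$ replaced by $q^2$, and that theorem evaluates it to $\prod_{i\ge 1}(1+u/q^{2i})^2 \big/ \prod_{i\ge 1}(1-u^2/q^{2i})$, which is exactly the desired right-hand side. To close, I would check the hypothesis: after $q \mapsto q^2$, Theorem~\ref{sumprodGLodd} requires $u^2 < q^2$, i.e. $|u| < q$, which is precisely the hypothesis stated here, so every rearrangement is legitimate.

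There is essentially no obstacle; the only care needed is the bookkeeping of powers of $q$ — chiefly the identities $|Sp(2r,q)| = q^r|GL(r,q^2)|$ and $n^2 - n = 2\binom{n}{2}$ — after which the conclusion is immediate. (Alternatively one could bypass Theorem~\ref{sumprodGLodd} entirely and feed $\frac{1}{|Sp(2r,q)|} = \frac{q^{-2r^2-r}}{(1/q^2;1/q^2)_r}$ directly into Lemma~\ref{ourmainlemma} applied with $q$ replaced by $q^2$ and a suitable choice of $a,b$, obtaining the product side as a value of $H$; but the $q \mapsto q^2$ route is the cleanest.)
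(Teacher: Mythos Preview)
Your proof is correct and essentially coincides with the paper's: the paper's formal proof simply invokes \eqref{spcard} and Lemma~\ref{ourmainlemma} to identify the double sum as $H(u,u,q^2)$, which is exactly the ``alternative'' you sketch in your parenthetical remark, while your main $q\mapsto q^2$ reduction to Theorem~\ref{sumprodGLodd} is precisely the route the paper itself flags just before the statement. Since Theorem~\ref{sumprodGLodd} was proved by recognizing its left side as $H(u,u,q)$, the two packagings are the same computation.
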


\begin{proof}
We can apply \eqref{spcard} and
Lemma~\ref{ourmainlemma} to evaluate the double sum, which is
$H(u,u,q^2).$
\end{proof}

The following rather tricky identity will be useful for treating even characteristic
symplectic groups.

\begin{theorem} \label{Guralsum} For $q>1$ and $|u|<1$,
$$
\sum_{n\ge 0} u^nq^{n^2} \left(
\sum_{r=0 \atop r \ even}^n \frac{1}{A_r}
+\sum_{r=1 \atop r \ even}^n \frac{1}{B_r}+
\sum_{r=1 \atop r \ odd}^n \frac{1}{C_r}
\right)=
\frac{1}{1-u} \frac{\prod_{i \geq 1} (1+u/q^{2i})}{\prod_{i \geq 1} (1-u^2/q^{2i})}.
$$
where
$$
\begin{aligned}
A_r= &q^{r(r+1)/2+r(2n-2r)}|Sp(r,q)|\ |Sp(2n-2r,q)|,\\
B_r=& q^{r(r+1)/2+r(2n-2r)}q^{r-1}|Sp(r-2,q)|\ |Sp(2n-2r,q)|,\\
C_r=& q^{r(r+1)/2+r(2n-2r)}|Sp(r-1,q)|\ |Sp(2n-2r,q)|.
\end{aligned}
$$
\end{theorem}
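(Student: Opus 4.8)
The plan is to split the inner parenthesized sum according to the three families $A_r$, $B_r$, $C_r$, and to recognize each resulting double sum as an instance of Lemma~\ref{ourmainlemma2} (or a minor variant of it), just as in the proofs of Theorems~\ref{sumprodGLeven} and \ref{prodSp}. First I would rewrite $|Sp(2m,q)|$ using \eqref{spcard} as $q^{2m^2+m}(1/q^2;1/q^2)_m$, and similarly express $|Sp(r,q)|$, $|Sp(r-2,q)|$, $|Sp(r-1,q)|$ in the $A_r,B_r,C_r$ terms in closed form. A subtlety here is that $|Sp(r,q)|$ only makes sense for $r$ even, which is exactly why the $A_r$ and $B_r$ sums range over even $r$ while the $C_r$ sum (which involves $|Sp(r-1,q)|$) ranges over odd $r$; after substituting $r=2s$ in the first two sums and $r=2s+1$ in the third, every symplectic order in sight is of a genuine symplectic group. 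The exponents $q^{r(r+1)/2}$ and $q^{r(2n-2r)}$ should then combine with the powers of $q$ coming from \eqref{spcard} to produce, after the substitution, precisely the quadratic-in-$s$ and bilinear-in-$(s,n)$ exponents appearing in $G(a,b,q^2,s_0,t_0)$ for appropriate choices of the parameters.

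Concretely, I expect the $A_r$ sum (with $r=2s$) to reproduce the Theorem~\ref{prodSp}-type double sum, i.e. an $H(u,u,q^2)$ or equivalently a $G(u,u^2,q^2,2,0)$ evaluation, giving a contribution of the form $\prod_{i\ge1}(1+u/q^{2i})^2/\prod_{i\ge1}(1-u^2/q^{2i})$ — or a piece of it. The $C_r$ sum (with $r=2s+1$) should, after pulling out the extra $|Sp(2s,q)|$ versus $|Sp(2s+1,q)|$ discrepancy, again be a $G$-evaluation but with shift parameter $t$ nonzero (reflecting the odd offset), contributing an extra factor that accounts for the $\tfrac{1}{1-u}$ on the right-hand side. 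The $B_r$ sum, which carries the anomalous extra $q^{r-1}$ and the lowered rank $r-2$, is the one that needs the most care: I would factor out the $q^{r-1}$ and the shift in rank, re-index, and check that it too collapses to a $G$-type sum whose generating function is a rational-times-product expression. The three pieces are then added; the main computation is verifying that the three closed forms sum to $\tfrac{1}{1-u}\,\prod_{i\ge1}(1+u/q^{2i})/\prod_{i\ge1}(1-u^2/q^{2i})$, which after clearing the common product factor $\prod_{i\ge1}(1+u/q^{2i})/\prod_{i\ge1}(1-u^2/q^{2i})$ reduces to an elementary identity among a few simple geometric-type series in $u$ and $u/q^{2i}$.

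The hard part will be the bookkeeping of exponents: matching $q^{r(r+1)/2 + r(2n-2r)}$ together with the $q^{2m^2+m}$ from \eqref{spcard} to the template exponent $q^{r^2}q^{(n-sr-t)^2}q^{r(sn-(1+s^2/2)r)}$ in Lemma~\ref{ourmainlemma2} after $q\mapsto q^2$ and the reindexing $r=2s$ (resp. $2s+1$), and correctly tracking where the extra $q^{r-1}$ in $B_r$ and the rank shifts $r-2$, $r-1$ land. Once the parameter dictionary is pinned down, each of the three sums is a direct citation of Lemma~\ref{ourmainlemma2}, and the remaining algebra is the routine verification that the sum of the three $G$-values telescopes to the claimed product. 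I would organize the writeup as: (i) rewrite orders via \eqref{spcard}; (ii) reindex and identify each of the three sums as a specific $G(\cdot)$; (iii) add and simplify.
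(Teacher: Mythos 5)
Your plan follows the paper's proof essentially verbatim: rewrite the symplectic orders via \eqref{spcard}, reindex the three sums (the paper takes $r\mapsto 2r$, $2r+2$, $2r+1$), evaluate each by Lemma~\ref{ourmainlemma2} as a $G$-value --- namely $G(u,u^2,q^2,2,0)$, $q^6G(u/q^4,u^2q^2,q^2,2,2)$ and $q^2G(u/q^2,u^2q^2,q^2,2,1)$ --- and then add, the last step being exactly the elementary rational identity in $u$ that you anticipate (it is this addition, not the $C_r$ piece alone, that produces the factor $\tfrac{1}{1-u}$). One small correction to your dictionary: the $A_r$ sum is $G(u,u^2,q^2,2,0)=(-u/q^2;1/q^2)_\infty/(u^2/q^2;1/q^2)_\infty$, which is \emph{not} the same as $H(u,u,q^2)$ (whose numerator is squared); the extra parabolic factor $q^{r(r+1)/2+r(2n-2r)}$ is precisely what distinguishes this sum from the Theorem~\ref{prodSp} sum, so it is a $G$-type, not an $H$-type, evaluation.
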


\begin{proof}Each of the three double sums may be computed using
Lemma~\ref{ourmainlemma2}.

Replacing $r$ by $2r$, $2r+2$, and $2r+1$ respectively in these three double sums,
we see that the three sums become
$$
\begin{aligned}
G(u,u^2,& q^2,2,0)+q^6 G(u/q^4,u^2q^2,q^2,2,2) + q^2 G(u/q^2,u^2q^2,q^2,2,1)\\
=&
(-u/q^2;1/q^2)_\infty
\left(\frac{1}{(u^2/q^2;1/q^2)_\infty}+\frac{u^2}{(u^2;1/q^2)_\infty}+
\frac{u}{(u^2;1/q^2)_\infty}
\right)\\
=& \frac{1}{1-u} \frac{(-u/q^2;1/q^2)_\infty}{(u^2/q^2;1/q^2)_\infty}
\end{aligned}
$$
\end{proof}

Next we prove three ``sum=product" identities which will be useful for studying odd characteristic orthogonal groups.
Recall that in any characteristic,
\begin{equation}
\label{Ooddcard}
 |O^+(2n+1,q)| = |O^-(2n+1,q)| = 2 q^{n^2}
\prod_{i=1}^n (q^{2i}-1)=2q^{2n^2+n}(1/q^2;1/q^2)_n.
\end{equation}
and that $|O^+(1,q)|=|O^-(1,q)|=2$.
Recall also that in any characteristic, for $n \geq 1$,
\begin{equation}
\label{Oevencard}
|O^{\pm}(2n,q)|=
\frac{2q^{n^2-n}}{q^n\pm 1}\prod_{i=1}^{n} (q^{2i}-1)
=\frac{2q^{2n^2}}{q^n\pm 1}(1/q^2;1/q^2)_n.
\end{equation} It is also helpful to use \eqref{Oevencard} to define
$|O^{\pm}(2n,q)|$ when $n=0$. Thus
\[ |O^+(0,q)|=1 \ , \ 1/|O^-(0,q)|=0. \]

Our first identity is useful in treating odd characteristic even dimensional orthogonal groups of positive type.

\begin{theorem} \label{prodO1} For $q>1$ and $|u|<1/q$,
\begin{eqnarray*}
& &  \sum_{n \geq 0} u^n q^{n^2} \\
& & \cdot \left[ \sum_{r=0}^{2n} \frac{1}{|O^+(r,q)||O^+(2n-r,q)|} + \sum_{r=1}^{2n-1} \frac{1}{|O^-(r,q)||O^-(2n-r,q)|} \right]  \\
& = & \frac{1}{2 (1-uq)} \frac{\prod_{i \geq 1} (1+u/q^{2(i-1)})^2}{\prod_{i \geq 1} (1-u^2/q^{2(i-1)})}
+ \frac{1}{2} \frac{\prod_{i \geq 1} (1+u/q^{2i-1})^2}{\prod_{i \geq 1} (1-u^2/q^{2(i-1)})}.
\end{eqnarray*}
\end{theorem}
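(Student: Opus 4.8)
The plan is to collapse the bracketed double sum into two elementary pieces --- one built from the orders $|GL(\cdot,q^2)|$ and one from $|Sp(\cdot,q)|$ --- and then to recognize the two resulting generating functions as instances of Theorems \ref{sumprodGLodd} and \ref{prodSp}, which are already established.

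First split each inner sum according to the parity of $r$. In the $O^+$ sum the even part is $\sum_{a=0}^{n}1/\big(|O^+(2a,q)||O^+(2n-2a,q)|\big)$, using $|O^+(0,q)|=1$, and the odd part is $\sum_{a=0}^{n-1}1/\big(|O^+(2a+1,q)||O^+(2n-2a-1,q)|\big)$; similarly for the $O^-$ sum, whose even part may be extended to $a=0,\dots,n$ at no cost since $1/|O^-(0,q)|=0$. Now invoke the cardinality formulas \eqref{Ooddcard} and \eqref{Oevencard}. Since $|O^+(2a+1,q)|=|O^-(2a+1,q)|=2|Sp(2a,q)|$, the two odd parts coincide, and together they give $\frac{1}{2}\sum_{a=0}^{n-1}1/\big(|Sp(2a,q)||Sp(2(n-1-a),q)|\big)$. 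For the even parts, $1/|O^{\pm}(2a,q)|=(q^{a}\pm1)/\big(2|GL(a,q^2)|\big)$ after observing $|GL(a,q^2)|=q^{2a^2}(1/q^2;1/q^2)_a$; adding the $+$ and $-$ products the cross terms cancel, $(q^{a}+1)(q^{n-a}+1)+(q^{a}-1)(q^{n-a}-1)=2(q^{n}+1)$, leaving $\frac{q^{n}+1}{2}\sum_{a=0}^{n}1/\big(|GL(a,q^2)||GL(n-a,q^2)|\big)$.

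Multiplying by $u^{n}q^{n^{2}}$ and summing over $n$, the left-hand side becomes $S_{1a}+S_{1b}+S_{2}$, where $S_{1b}$ and $S_{1a}$ are $\frac{1}{2}$ times the $GL(\cdot,q^{2})$-double sum weighted respectively by $u^{n}q^{n^{2}}$ and by $u^{n}q^{n^{2}+n}$, and $S_{2}$ is $\frac{1}{2}$ times the $Sp$-double sum with its summation index shifted by one. Using $q^{n^{2}}=q^{n}q^{n^{2}-n}$, $q^{n^{2}+n}=q^{2n}q^{n^{2}-n}$, and $q^{2\binom{n}{2}}=q^{n^{2}-n}$, each of $S_{1a},S_{1b}$ is Theorem \ref{sumprodGLodd} with $q$ replaced by $q^{2}$ and $u$ replaced by $uq^{2}$ (resp.\ $uq$); and after the reindexing $m=n-1$, $S_{2}$ equals $\frac{uq}{2}$ times Theorem \ref{prodSp} with $u$ replaced by $uq^{2}$. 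The hypothesis $|u|<1/q$ is precisely what puts all three substitutions inside the stated domains of convergence.

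Finally, assemble the products. Writing $A$ and $B$ for the two products $\prod_{i\ge1}(1+u/q^{2(i-1)})^{2}/\prod_{i\ge1}(1-u^{2}/q^{2(i-1)})$ and $\prod_{i\ge1}(1+u/q^{2i-1})^{2}/\prod_{i\ge1}(1-u^{2}/q^{2(i-1)})$, one obtains $S_{1b}=\frac{1}{2}B$, $S_{1a}=A/\big(2(1-u^{2}q^{2})\big)$, and $S_{2}=uqA/\big(2(1-u^{2}q^{2})\big)$; the only nonroutine step is the telescoping of product ratios such as $\prod_{i\ge1}(1-u^{2}/q^{2(i-1)})/\prod_{i\ge1}(1-u^{2}q^{4}/q^{2i})=1/(1-u^{2}q^{2})$. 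Then $S_{1a}+S_{2}=A(1+uq)/\big(2(1-u^{2}q^{2})\big)=A/\big(2(1-uq)\big)$, and adding $S_{1b}$ yields the asserted right-hand side. I expect the main obstacle to be purely organizational: executing the parity split and the range extensions without slips, and tracking which rescaling of $(u,q)$ in Theorems \ref{sumprodGLodd} and \ref{prodSp} produces each product with the correct shifts in the indices.
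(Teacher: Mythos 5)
Your proof is correct, and it reaches the identity by a cleaner route than the paper's. The paper also begins with the parity split, but it then treats the four resulting double sums ($O^+O^+$ even/odd and $O^-O^-$ even/odd) separately: each four-term numerator such as $(1+q^R)(1+q^{n-R})$ is expanded into monomials and Lemma \ref{ourmainlemma} is invoked with four different parameter pairs $(a,b)$, producing cross terms like $(-u/q;1/q^2)_\infty(-u;1/q^2)_\infty/(qu^2;1/q^2)_\infty$ that only cancel when the $+$ and $-$ pieces are added at the end. You instead add the $O^+O^+$ and $O^-O^-$ contributions \emph{before} applying any $q$-series machinery, so the cross terms cancel at the level of the numerators via $(q^a+1)(q^{n-a}+1)+(q^a-1)(q^{n-a}-1)=2(q^n+1)$, and the odd-rank pieces are identified outright with $\tfrac12\cdot$(a shifted $Sp$ convolution). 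This reduces the whole computation to two already-proved results — Theorem \ref{sumprodGLodd} with $q\mapsto q^2$ at $v=uq$ and $v=uq^2$, and Theorem \ref{prodSp} at $v=uq^2$ — rather than six fresh applications of Lemma \ref{ourmainlemma}. The underlying engine (the $q$-binomial theorem through $H(a,b,q^2)$) is the same, but your organization buys fewer case computations and makes the final assembly $\frac{A(1+uq)}{2(1-u^2q^2)}+\frac{B}{2}=\frac{A}{2(1-uq)}+\frac{B}{2}$ transparent; the paper's version has the minor advantage of generalizing verbatim to Theorem \ref{prodO2}, where the cross terms do not cancel in the same way (there the middle terms cancel within a single case instead). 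All index ranges, the extension of the $O^-$ even sum using $1/|O^-(0,q)|=0$, and the convergence bookkeeping ($|uq^2|<q$ forcing $|u|<1/q$) check out.
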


\begin{proof} It suffices to show that the left-hand side of the statement of Theorem \ref{prodO1} is equal to
\[ \frac{1}{2}\left[
\frac{(-u/q;1/q^2)_\infty^2}{(u^2;1/q^2)_\infty} +(1+uq)
\frac{(-u;1/q^2)_\infty^2 }{(q^2u^2;1/q^2)_\infty}
\right] .\]

Because the value of $|O^{+}(r,q)|$ depends on the parity of $r$, we split each sum into two sums, depending on the parity of $r$.
\vskip5pt
CASE 1: $r=2R$, first sum:
$$
S_1=\frac{1}{4}
\sum_{n=0}^\infty (-u)^n q^{n^2} \sum_{R=0}^n \frac{(1+q^R)(1+q^{n-R})}
{q^{R^2-R}(q^2;q^2)_{R} \ q^{(n-R)^2-(n-R)} (q^2;q^2)_{n-R}}
$$
To write this in the form of Lemma~\ref{ourmainlemma}
with $q$ replaced by $q^2$, note that
$$
q^{R^2-R}=q^{2\binom{R}{2}},\quad q^{(n-R)^2-(n-R)}=q^{2\binom{n-R}{2}}
$$
$$
q^{n^2}(1+q^R)(1+q^{n-R})=q^{2\binom{n}{2}}q^n(1+q^R)(1+q^{n-R})
$$
The numerator has 4 terms $q^n, q^{n+R}, q^{2n-R}, q^{2n}.$
We pick $(a,b)$ in Lemma~\ref{ourmainlemma},
so that these four terms appear. The choices are
$(a,b)=(uq,uq)$, $(a,b)=(uq,uq^2),$ $(a,b)=(uq^2,uq),$
and $(a,b)=(uq^2,uq^2)$ respectively to obtain
$$
S_1=\frac{1}{4}\left[
\frac{(-u/q;1/q^2)_\infty^2}{(u^2;1/q^2)_\infty}+2
\frac{(-u/q;1/q^2)_\infty (-u;1/q^2)_\infty}{(qu^2;1/q^2)_\infty}+
\frac{(-u;1/q^2)_\infty^2 }{(q^2u^2;1/q^2)_\infty}\right]
$$
\vskip5pt
CASE 2: $r=2R+1$, first sum:
$$
S_2=
\frac{1}{4}
\sum_{n=1}^\infty u^n q^{n^2} \sum_{R=0}^{n-1} \frac{(-1)^R (-1)^{n-1-R}}
{q^{R^2}(q^2;q^2)_R\  q^{(n-R-1)^2} (q^2;q^2)_{n-R-1}}
$$
We use Lemma~\ref{ourmainlemma} with $q$ replaced by $q^2$ and $(a,b)=(uq^2,uq^2)$,
$$
S_2=
\frac{uq}{4}\frac{(-u;1/q^2)_\infty^2 }{(u^2q^2;1/q^2)_\infty}.
$$
\vskip5pt
CASE 3: $r=2R$, second sum:
$$
S_1=\frac{1}{4}
\sum_{n=0}^\infty (-u)^n q^{n^2} \sum_{R=0}^{n} \frac{(q^R-1)(q^{n-R}-1)}
{q^{R^2-R}(q^2;q^2)_R\  q^{(n-R)^2-(n-R)} (q^2;q^2)_{n-R}}
$$
The numerator has 4 terms $q^n, -q^{n+R}, -q^{2n-R}, q^{2n}.$ As in CASE 1 we obtain
$$
S_3=\frac{1}{4}\left[
\frac{(-u/q;1/q^2)_\infty^2}{(u^2;1/q^2)_\infty}-2
\frac{(-u/q;1/q^2)_\infty (-u;1/q^2)_\infty}{(qu^2;1/q^2)_\infty}+
\frac{(-u;1/q^2)_\infty^2 }{(q^2u^2;1/q^2)_\infty}\right]
$$
\vskip5pt
CASE 4: $r=2R+1$, second sum:
$$
S_4=
\frac{1}{4}
\sum_{n=1}^\infty u^n q^{n^2} \sum_{R=0}^{n-1} \frac{(-1)^R (-1)^{n-1-R}}
{q^{R^2}(q^2;q^2)_R\  q^{(n-R-1)^2} (q^2;q^2)_{n-R-1}}
$$
This is the same as CASE 2,
$$
S_4=\frac{uq}{4}\frac{(-u;1/q^2)_\infty^2 }{(u^2q^2;1/q^2)_\infty}.
$$

So
$$
S_1+S_2+S_3+S_4=\frac{1}{2}\left[
\frac{(-u/q;1/q^2)_\infty^2}{(u^2;1/q^2)_\infty} +(1+uq)
\frac{(-u;1/q^2)_\infty^2 }{(q^2u^2;1/q^2)_\infty}
\right]
$$
\end{proof}

Our second identity is useful in treating odd characteristic even dimensional orthogonal groups of negative type.

\begin{theorem} \label{prodO2} For $q>1$ and $|u|<1/q$,
\begin{eqnarray*}
& & \sum_{n \geq 0} u^n q^{n^2} \\
& & \cdot \left[ \sum_{r=0}^{2n-1} \frac{1}{|O^+(r,q)||O^-(2n-r,q)|} + \sum_{r=1}^{2n} \frac{1}{|O^-(r,q)||O^+(2n-r,q)|} \right] \\
& = & \frac{1}{2 (1-uq)} \frac{\prod_{i \geq 1} (1+u/q^{2(i-1)})^2}{\prod_{i \geq 1} (1-u^2/q^{2(i-1)})}
- \frac{1}{2} \frac{\prod_{i \geq 1} (1+u/q^{2i-1})^2}{\prod_{i \geq 1} (1-u^2/q^{2(i-1)})}.
\end{eqnarray*}
\end{theorem}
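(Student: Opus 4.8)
The plan is to mirror the proof of Theorem~\ref{prodO1} almost verbatim, since Theorem~\ref{prodO2} differs only in that the sums pair a positive-type factor with a negative-type factor (and vice versa). As in the previous proof, I would first reduce to showing that the left-hand side equals
\[
\frac{1}{2}\left[
\frac{(-u/q;1/q^2)_\infty^2}{(u^2;1/q^2)_\infty} - (1+uq)
\frac{(-u;1/q^2)_\infty^2 }{(q^2u^2;1/q^2)_\infty}
\right],
\]
using the product expansions \eqref{Oevencard} and \eqref{Ooddcard} (together with the conventions $|O^+(0,q)|=1$, $1/|O^-(0,q)|=0$) to convert the target product into this form.

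Next I would split each of the two inner sums according to the parity of $r$, producing four cases exactly as before. The key point is that $|O^+(2R,q)|$ and $|O^-(2R,q)|$ have the \emph{same} $q$-power prefactor $\tfrac{2q^{2R^2}}{q^R\pm 1}$, differing only in the sign in $q^R\pm1$; equivalently $1/|O^{\pm}(2R,q)|$ contributes a numerator factor $(q^R\pm 1)$. So in the mixed products $1/(|O^+(2R,q)||O^-(2n-2R,q)|)$ one gets the numerator $(q^R+1)(q^{n-R}-1)$ rather than $(q^R+1)(q^{n-R}+1)$; expanding gives the four monomials $q^n,\ q^{n+R},\ -q^{2n-R},\ -q^{2n}$. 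Feeding these into Lemma~\ref{ourmainlemma} with $q\mapsto q^2$ and the same four choices $(a,b)\in\{(uq,uq),(uq,uq^2),(uq^2,uq),(uq^2,uq^2)\}$ as in CASE~1 of the previous proof, the signs on the second and fourth terms flip, so the even-$r$ contribution from the first sum becomes
\[
\frac{1}{4}\left[
\frac{(-u/q;1/q^2)_\infty^2}{(u^2;1/q^2)_\infty} - 2\,
\frac{(-u/q;1/q^2)_\infty (-u;1/q^2)_\infty}{(qu^2;1/q^2)_\infty} -
\frac{(-u;1/q^2)_\infty^2 }{(q^2u^2;1/q^2)_\infty}\right],
\]
and the even-$r$ contribution from the second sum (where now the first factor is $O^-$, second is $O^+$) gives the numerator $(q^R-1)(q^{n-R}+1)$, yielding
\[
\frac{1}{4}\left[
\frac{(-u/q;1/q^2)_\infty^2}{(u^2;1/q^2)_\infty} + 2\,
\frac{(-u/q;1/q^2)_\infty (-u;1/q^2)_\infty}{(qu^2;1/q^2)_\infty} -
\frac{(-u;1/q^2)_\infty^2 }{(q^2u^2;1/q^2)_\infty}\right].
\]
For the odd-$r$ cases, $r=2R+1$, both factors are of odd dimension, so \eqref{Ooddcard} applies and these are identical to CASES~2 and~4 in the proof of Theorem~\ref{prodO1} \emph{except} for an overall sign: since $1/|O^{\pm}(2n-r,q)|$ now carries the opposite type, tracking the factor of $2$'s and the sign shows each odd-$r$ sum equals $-\tfrac{uq}{4}\,\tfrac{(-u;1/q^2)_\infty^2}{(u^2q^2;1/q^2)_\infty}$.

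Summing the four contributions, the cross terms $\pm 2(-u/q;1/q^2)_\infty(-u;1/q^2)_\infty/(qu^2;1/q^2)_\infty$ cancel in pairs, and one is left with
\[
\frac{1}{2}\,\frac{(-u/q;1/q^2)_\infty^2}{(u^2;1/q^2)_\infty}
- \frac{1}{2}(1+uq)\,\frac{(-u;1/q^2)_\infty^2}{(q^2u^2;1/q^2)_\infty},
\]
which is the claimed closed form; rewriting via the substitution $i\mapsto i-1$ in the index gives the stated product side. The main obstacle is purely bookkeeping rather than conceptual: one must be scrupulous about the factors of $2$ coming from $|O^{\pm}|$ and about which of the four Lemma~\ref{ourmainlemma} specializations the odd-$r$ sums use, since a single sign error would turn the minus in the statement into a plus. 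I would double-check the odd-$r$ sign by comparing the $n=1$ coefficient on both sides directly.
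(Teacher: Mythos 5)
Your overall strategy is the paper's (split each inner sum by the parity of $r$, then apply Lemma \ref{ourmainlemma} with $q\mapsto q^2$ and the four specializations $(a,b)\in\{(uq,uq),(uq,uq^2),(uq^2,uq),(uq^2,uq^2)\}$), but the execution contains compounding sign errors, and as written it establishes the \emph{negative} of the identity. First, your target is mis-transcribed: since $(q^2u^2;1/q^2)_\infty=(1-uq)(1+uq)(u^2;1/q^2)_\infty$, the right-hand side of the theorem equals
\[
\frac{1}{2}\left[-\frac{(-u/q;1/q^2)_\infty^2}{(u^2;1/q^2)_\infty}+(1+uq)\frac{(-u;1/q^2)_\infty^2}{(q^2u^2;1/q^2)_\infty}\right],
\]
which is exactly the negative of what you set out to prove. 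Second, the even-$r$ expansion is wrong: $q^n(1+q^R)(q^{n-R}-1)=-q^n-q^{n+R}+q^{2n-R}+q^{2n}$, so relative to CASE 1 of Theorem \ref{prodO1} it is the $q^n$ and $q^{n+R}$ terms whose signs flip; your monomial list is the negation of this, and your phrase ``the signs on the second and fourth terms flip'' is inconsistent even with your own list. Moreover, the two cross terms cancel \emph{within} each even-$r$ case, because $H(uq,uq^2,q^2)=H(uq^2,uq,q^2)$, not across the two sums as you describe. Third, and decisively, the odd-$r$ terms acquire no sign at all: when $r$ is odd, both $r$ and $2n-r$ are odd, and by \eqref{Ooddcard} $|O^+(2m+1,q)|=|O^-(2m+1,q)|$, so these cases are literally identical to CASE 2 of Theorem \ref{prodO1} and each contributes $+\frac{uq}{4}\frac{(-u;1/q^2)_\infty^2}{(u^2q^2;1/q^2)_\infty}$, not $-\frac{uq}{4}(\cdots)$; there is no ``opposite type'' to track in odd dimension.

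Because all three mistakes are uniform sign flips, your cases are mutually consistent and you land precisely on the negative of the correct closed form; the $n=1$ check you propose would have exposed this, since the $u$-coefficient of the stated right-hand side is $\frac{q}{2}+\frac{q}{q+1}$, which matches the direct count $q\bigl(\frac{1}{2(q+1)}+\frac14+\frac14+\frac{1}{2(q+1)}\bigr)$, while your closed form gives its negative. With the signs corrected your argument becomes the paper's proof, except that the paper also observes that the two inner sums coincide under $r\mapsto 2n-r$, so it computes only the first sum and doubles, i.e. $2(S_1+S_2)$, rather than running four separate cases.
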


\begin{proof} It suffices to prove that the left hand side of Theorem \ref{prodO2} is equal to
\[ \frac{1}{2}\left[
-\frac{(-u/q;1/q^2)_\infty^2}
{(u^2;1/q^2)_\infty} +(1+uq)\frac{(-u;1/q^2)_\infty^2 }{(u^2q^2;1/q^2)_\infty}
\right]. \]

Note that the two sums on the left hand side of the theorem are equal, so we focus on the first sum.
Because the value of $|O^{+}(r,q)|$ depends on the parity of $r$, we split the first sum into two cases,
depending on the parity of $r$.
\vskip5pt
CASE 1: $r=2R$:
$$
S_1=\frac{1}{4}
\sum_{n=0}^\infty (-u)^n q^{n^2} \sum_{R=0}^n \frac{(1+q^R)(q^{n-R}-1)}
{q^{R^2-R}(q^2;q^2)_{R} \ q^{(n-R)^2-(n-R)} (q^2;q^2)_{n-R}}
$$
The numerator has 4 terms $-q^n, -q^{n+R}, q^{2n-R}, q^{2n}.$
The middle two terms cancel,
so as in CASE 1 of
Theorem \ref{prodO1}
$$
S_1=\frac{1}{4}\left[
-\frac{(-u/q;1/q^2)_\infty^2}{(u^2;1/q^2)_\infty}+
\frac{(-u;1/q^2)_\infty^2 }{(q^2u^2;1/q^2)_\infty}\right]
$$
\vskip5pt
CASE 2: $r=2R+1$:
As in CASE 2 of Theorem \ref{prodO1}

\begin{eqnarray*}
S_2 & = &
\frac{1}{4}
\sum_{n=1}^\infty u^n q^{n^2} \sum_{R=0}^{n-1} \frac{(-1)^R (-1)^{n-1-R}}
{q^{R^2}(q^2;q^2)_R\  q^{(n-R-1)^2} (q^2;q^2)_{n-R-1}} \\
& = &
\frac{uq}{4}\frac{(-u;1/q^2)_\infty^2 }{(u^2q^2;1/q^2)_\infty}
\end{eqnarray*}

So we have
$$
2(S_1+S_2) = \frac{1}{2}\left[
-\frac{(-u/q;1/q^2)_\infty^2}
{(u^2;1/q^2)_\infty} +(1+uq)\frac{(-u;1/q^2)_\infty^2 }{(u^2q^2;1/q^2)_\infty}
\right]
$$
\end{proof}

The following ``sum=product" formula will be helpful in treating odd dimensional
orthogonal groups in odd characteristic.

\begin{theorem} \label{identlast} For $q>1$ and $|u|<1$,
\begin{eqnarray*}
& & \sum_{n \geq 0} u^n q^{n^2} \cdot \\
& & \left[ \sum_{r=0}^{2n+1} \frac{1}{|O^+(r,q)||O^+(2n+1-r,q)|}
+ \sum_{r=1}^{2n} \frac{1}{|O^-(r,q)||O^-(2n+1-r,q)|} \right]
\end{eqnarray*} is equal to
\[ \frac{1}{1-u} \frac{\prod_{i \geq 1} (1+u/q^{2i})^2}{\prod_{i \geq 1} (1-u^2/q^{2i})} .\]

\end{theorem}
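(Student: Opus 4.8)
The plan is to follow the template of the proofs of Theorems~\ref{prodO1} and~\ref{prodO2}, while exploiting the fact that the ambient dimension $2n+1$ is odd, which collapses the whole computation to a single application of Lemma~\ref{ourmainlemma}. First I would use the symmetry $r\mapsto 2n+1-r$ in each of the two inner sums. Since $2n+1$ is odd, this involution is fixed-point free on the index set and interchanges even with odd values of $r$, so the sum over even $r$ equals the sum over odd $r$; hence each inner sum equals twice its restriction to even $r=2R$. In the $O^-$ sum the range is nominally $1\le R\le n$, but the convention $1/|O^-(0,q)|=0$ lets us freely adjoin the term $R=0$. This rewrites the left-hand side as
\[
2\sum_{n\ge0}u^nq^{n^2}\sum_{R=0}^n\left(\frac{1}{|O^+(2R,q)|\,|O^+(2n+1-2R,q)|}+\frac{1}{|O^-(2R,q)|\,|O^-(2n+1-2R,q)|}\right).
\]

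Next I would substitute the explicit orders. The factor of odd dimension $2n+1-2R=2(n-R)+1$ is type-independent, equal to $2q^{2(n-R)^2+(n-R)}(1/q^2;1/q^2)_{n-R}$, while $1/|O^{\pm}(2R,q)|=(q^R\pm1)/\bigl(2q^{2R^2}(1/q^2;1/q^2)_R\bigr)$ for all $R\ge0$ under the stated conventions. Adding the $O^+$ and $O^-$ contributions cancels the ``$\pm1$'' and leaves a factor $2q^R$, so the left-hand side collapses to the single double sum
\[
\sum_{n\ge0}u^nq^{n^2}\sum_{R=0}^n\frac{q^R}{q^{2R^2+2(n-R)^2+(n-R)}\,(1/q^2;1/q^2)_R\,(1/q^2;1/q^2)_{n-R}}.
\]
I would then recognize this via Lemma~\ref{ourmainlemma} with $q$ replaced by $q^2$: using $(q^2;q^2)_kq^{2\binom{k}{2}}=(-1)^kq^{2k^2}(1/q^2;1/q^2)_k$ to put that lemma in the shape $\sum_m q^{m^2-m}\sum_k a^{m-k}b^k/\bigl(q^{2(m-k)^2+2k^2}(1/q^2;1/q^2)_{m-k}(1/q^2;1/q^2)_k\bigr)=H(a,b,q^2)$, then setting $m=n$, $k=n-R$ and comparing powers of $q$ and $u$ forces $a=uq^2$, $b=u$. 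The convergence condition $|ab/q^2|=|u^2|<1$ is precisely the hypothesis $|u|<1$, so the left-hand side equals $H(uq^2,u,q^2)=(-u;1/q^2)_\infty(-u/q^2;1/q^2)_\infty/(u^2;1/q^2)_\infty$.

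Finally I would identify this with the claimed product. Peeling the $k=0$ factors off numerator and denominator gives $(-u;1/q^2)_\infty=(1+u)(-u/q^2;1/q^2)_\infty$ and $(u^2;1/q^2)_\infty=(1-u)(1+u)(u^2/q^2;1/q^2)_\infty$, whence $H(uq^2,u,q^2)=\frac{1}{1-u}\cdot\frac{(-u/q^2;1/q^2)_\infty^2}{(u^2/q^2;1/q^2)_\infty}$, which is exactly $\frac{1}{1-u}\frac{\prod_{i\ge1}(1+u/q^{2i})^2}{\prod_{i\ge1}(1-u^2/q^{2i})}$. The main obstacle is entirely bookkeeping: getting the exponents of $q$ exactly right when passing from the orthogonal-group orders to the $(1/q^2;1/q^2)$ normalization and then to the normalization of Lemma~\ref{ourmainlemma}, and handling the boundary indices $r=0,\,2n+1$ together with the $|O^-(0,q)|$ convention consistently. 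Unlike Theorems~\ref{prodO1} and~\ref{prodO2}, there are no ``cross terms'' to cancel and no need for Proposition~\ref{sieveJTP}, precisely because the odd-dimensional factor does not see the type and the two inner sums add up immediately.
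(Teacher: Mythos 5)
Your proposal is correct, and it takes a genuinely more streamlined route than the paper's proof. The paper splits each of the two inner sums by the parity of $r$, obtaining four sums $S_1,\dots,S_4$, evaluates each by Lemma~\ref{ourmainlemma} with various choices of $(a,b)$ (eight applications in total, since each numerator contributes two terms), and then relies on the cancellation of the cross terms involving $(u^2/q;1/q^2)_\infty$ between $S_1+S_2$ and $S_3+S_4$. You instead observe two structural simplifications available only in the odd-dimensional case: the fixed-point-free involution $r\mapsto 2n+1-r$ shows each inner sum is twice its even-$r$ part, and since $|O^+(2m+1,q)|=|O^-(2m+1,q)|$ while $1/|O^{\pm}(2R,q)|=(q^R\pm 1)/\bigl(2q^{2R^2}(1/q^2;1/q^2)_R\bigr)$ (valid for $R=0$ under the stated conventions, which also absorbs the missing $R=0$ term of the $O^-$ sum), adding the two type contributions cancels the $\pm 1$ at the level of the summand. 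This reduces everything to a single application of Lemma~\ref{ourmainlemma} with $(a,b)=(uq^2,u)$ and $q\mapsto q^2$; I checked the exponent bookkeeping, the identification $a^Rb^{n-R}=u^nq^{2R}$, the convergence condition $|ab/q^2|=|u|^2<1$, and the final peeling of the $k=0$ factors to pass from $H(uq^2,u,q^2)$ to $\frac{1}{1-u}\prod_{i\ge 1}(1+u/q^{2i})^2/\prod_{i\ge 1}(1-u^2/q^{2i})$, and all of it is right. What your approach buys is conceptual clarity (the cancellation happens before, not after, the generating-function machinery) at the cost of being special to odd ambient dimension; the paper's four-case template has the advantage of applying uniformly to Theorems~\ref{prodO1} and~\ref{prodO2} as well, where no such symmetry is available.
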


\begin{proof} It suffices to prove that the left hand side is equal to
\[ \frac{(-u;1/q^2)_\infty (-u/q^2;1/q^2)_\infty}{(u^2;1/q^2)_\infty}. \]

We split each sum into two sums, depending on the parity of $r$.
\vskip5pt
CASE 1: $r=2R$, first sum:
$$
S_1=\frac{1}{4}
\sum_{n=0}^\infty (-u)^n q^{n^2} \sum_{R=0}^n \frac{q^{-(n-R)}(1+q^R)}
{q^{R^2-R}(q^2;q^2)_{R} \ q^{(n-R)^2-(n-R)} (q^2;q^2)_{n-R}}
$$
This is nearly CASE 1 of Theorem \ref{prodO1},
the numerator has 2 terms $q^{R-n}$ and $q^{2R-n}.$ We apply
Lemma~\ref{ourmainlemma} with $(a,b)=(u,uq),$ $(a,b)=(u,uq^2),$
and $q$ replaced by $q^2$ to obtain
$$
S_1=\frac{1}{4}\left[
\frac{(-u/q^2;1/q^2)_\infty (-u/q;1/q^2)_\infty}{(u^2/q;1/q^2)_\infty}+
\frac{(-u/q^2;1/q^2)_\infty (-u;1/q^2)_\infty}{(u^2;1/q^2)_\infty}
\right]
$$
\vskip5pt
CASE 2: $r=2R+1$, first sum:
$$
\begin{aligned}
S_2=&
\frac{1}{4}
\sum_{n=0}^\infty (-u)^n q^{n^2} \sum_{R=0}^{n} \frac{(1+q^{n-R})}
{q^{R^2}(q^2;q^2)_R\  q^{(n-R)^2-(n-R)} (q^2;q^2)_{n-R}} \\
=&
\frac{1}{4}\left[
\frac{(-u;1/q^2)_\infty (-u/q^2;1/q^2)_\infty}{(u^2;1/q^2)_\infty}+
\frac{(-u/q;1/q^2)_\infty (-u/q^2;1/q^2)_\infty}{(u^2/q;1/q^2)_\infty}
\right].
\end{aligned}
$$
where we used Lemma \ref{ourmainlemma} with
$(a,b)=(uq^2,u),$ $(a,b)=(uq,u)$ and $q$ replaced by $q^2$.
\vskip5pt
CASE 3: $r=2R$, second sum:
$$
S_3=\frac{1}{4}
\sum_{n=0}^\infty (-u)^n q^{n^2} \sum_{R=0}^{n} \frac{(q^R-1)}
{q^{R^2-R}(q^2;q^2)_R\  q^{(n-R)^2} (q^2;q^2)_{n-R}}
$$
The numerator has 2 terms $q^nq^{-(n-R)}(q^R-1)=q^{2R}-q^R.$
Using $(a,b)=(u,uq^2),$ $(a,b)=(u,uq)$ and $q$ replaced by $q^2$
in Lemma \ref{ourmainlemma}, we obtain
$$
S_3=\frac{1}{4}\left[
\frac{(-u/q^2;1/q^2)_\infty (-u;1/q^2)_\infty}{(u^2;1/q^2)_\infty}-
\frac{(-u/q^2;1/q^2)_\infty (-u/q;1/q^2)_\infty}{(u^2/q;1/q^2)_\infty}
\right]
$$
\vskip5pt
CASE 4: $r=2R+1$, second sum:
$$
S_4=
\frac{1}{4}
\sum_{n=0}^\infty (-u)^n q^{n^2} \sum_{R=0}^{n} \frac{(q^{n-R}-1)}
{q^{R^2}(q^2;q^2)_R\  q^{(n-R)^2-(n-R)} (q^2;q^2)_{n-R}}
$$
This is basically the same as CASE 2,
$$
S_4=\frac{1}{4}\left[
\frac{(-u;1/q^2)_\infty (-u/q^2;1/q^2)_\infty}{(u^2;1/q^2)_\infty}-
\frac{(-u/q;1/q^2)_\infty (-u/q^2;1/q^2)_\infty}{(u^2/q;1/q^2)_\infty}
\right].
$$

So we have
$$
S_1+S_2+S_3+S_4=
\frac{(-u;1/q^2)_\infty (-u/q^2;1/q^2)_\infty}{(u^2;1/q^2)_\infty}.
$$
\end{proof}

Finally, we give two identities which will be useful for analyzing the number of
involutions in even characteristic orthogonal groups.

Theorem \ref{GuralsumO+} will be useful for positive type orthogonal groups in even characteristic.

\begin{theorem} \label{GuralsumO+} For $q>1$ and $|u|<1/q$,
\begin{eqnarray*}
& & \sum_{n \geq 0} q^{n^2}u^n \left[ \sum_{r=0 \atop r \ even}^n \frac{1}{A_r} +
\sum_{r=1 \atop r \ even}^n \frac{1}{B_r} + \sum_{r=1 \atop r \ odd}^n \frac{1}{C_r}
    \right] \\
& = & \frac{1}{2(1-uq)}
\frac{\prod_{i \geq 1} (1+u/q^{2(i-1)})}{\prod_{i \geq 1} (1-u^2/q^{2(i-1)})}
+ \frac{1}{2}
\frac{\prod_{i \geq 1} (1+u/q^{2i-1})}{\prod_{i \geq 1} (1-u^2/q^{2(i-1)})},
\end{eqnarray*} where

\[ A_r = q^{r(r-1)/2+r(2n-2r)} |Sp(r,q)| |O^+(2n-2r,q)| \]

\[ B_r = 2q^{r(r+1)/2+(r-1)(2n-2r)-1} |Sp(r-2,q)| |Sp(2n-2r,q)| \]

\[ C_r = 2q^{r(r-1)/2+(r-1)(2n-2r)} |Sp(r-1,q)| |Sp(2n-2r,q)|. \]
\end{theorem}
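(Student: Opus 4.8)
The plan is to follow the template of the proof of Theorem~\ref{Guralsum}: substitute the standard cardinality formulas to turn every factorial into a $(1/q^2;1/q^2)$-Pochhammer symbol, reindex each of the three inner sums so that it matches the shape of Lemma~\ref{ourmainlemma2} with $q$ replaced by $q^2$ and step parameter $s=2$, read off the closed forms with Corollaries~\ref{cor1} and~\ref{cor2}, and add.

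First I would record, from \eqref{spcard} and \eqref{Oevencard}, that $|Sp(2m,q)|=q^{2m^2+m}(1/q^2;1/q^2)_m$ and $|O^+(2m,q)|=\tfrac{2q^{2m^2}}{q^m+1}(1/q^2;1/q^2)_m$ (the latter valid also at $m=0$, giving $|O^+(0,q)|=1$). The genuinely new feature, compared with Theorem~\ref{Guralsum}, is the factor $1/|O^+(2m,q)|=\tfrac{q^m+1}{2q^{2m^2}(1/q^2;1/q^2)_m}$ occurring in the $A_r$-sum: the binomial $q^m+1$ in the numerator forces that sum to break into two pieces, one carrying an extra factor $q^{n-r}$ and one without. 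After putting $r=2s$ in the $A_r$-sum, $r=2s+2$ in the $B_r$-sum, and $r=2s+1$ in the $C_r$-sum, and after checking that the explicit powers of $2$ coming from $|O^+|$, $B_r$ and $C_r$ all combine into a single overall factor $\tfrac12$, one is left with four instances of $G(\cdot,\cdot,q^2,2,t)$: two from the split $A_r$-sum (with $t=0$), one from the $B_r$-sum (with $t=2$), and one from the $C_r$-sum (with $t=1$). Here it is worth noting that the $s$-summations in the $B_r$- and $C_r$-pieces have the form $\sum_{s\ge0}(u^2q^2)^s/(1/q^2;1/q^2)_s$, which is exactly why the hypothesis $|u|<1/q$ is needed for convergence.

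Evaluating these four $G$'s, I expect the $A_r$-sum to collapse to $\tfrac{1}{2(u^2;1/q^2)_\infty}\bigl[(-u;1/q^2)_\infty+(-u/q;1/q^2)_\infty\bigr]$ and the $B_r$- and $C_r$-sums together, using $1-u^2q^2=(1-uq)(1+uq)$, to collapse to $\tfrac{uq\,(-u;1/q^2)_\infty}{2(1-uq)(u^2;1/q^2)_\infty}$. Adding the three and using $1+\tfrac{uq}{1-uq}=\tfrac{1}{1-uq}$ gives $\tfrac{1}{2(1-uq)}\tfrac{(-u;1/q^2)_\infty}{(u^2;1/q^2)_\infty}+\tfrac12\tfrac{(-u/q;1/q^2)_\infty}{(u^2;1/q^2)_\infty}$, which is the right-hand side after rewriting $(-u;1/q^2)_\infty=\prod_{i\ge1}(1+u/q^{2(i-1)})$, $(-u/q;1/q^2)_\infty=\prod_{i\ge1}(1+u/q^{2i-1})$, and $(u^2;1/q^2)_\infty=\prod_{i\ge1}(1-u^2/q^{2(i-1)})$.

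The main obstacle is the exponent bookkeeping. For each of the four pieces one must check that the net power of $q$ — namely $q^{n^2}$ divided by the power of $q$ hidden in $A_r$ (respectively $B_r$, $C_r$) and in the two Pochhammer factors — telescopes to a shape $q^{-M^2+cM+d}$ in the outer index $M=n-r$, so that Corollary~\ref{cor2} applies; for instance in the $A_r$-sum with $r=2s$ one needs the identity $n^2-\bigl[\tfrac{r(r-1)}{2}+r(2n-2r)\bigr]-(2s^2+s)-2M^2=-M^2$. One must also confirm that the two pieces of the split $A_r$-sum and the $B_r$/$C_r$ pieces recombine so that the spurious factor $1-u^2q^2$ cancels and the pole at $u=1/q$ is simple, as the statement requires. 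None of this is conceptually difficult, but it is the step where an arithmetic error is most likely to slip in.
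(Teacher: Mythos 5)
Your plan is correct and is essentially the paper's own proof: the same splitting of the $A_r$-sum into two pieces via $q^{n-r}+1$, the same reindexings $r=2s,\,2s+2,\,2s+1$, the same four instances of $G(\cdot,\cdot,q^2,2,t)$ from Lemma~\ref{ourmainlemma2}, and the same final recombination using $1-u^2q^2=(1-uq)(1+uq)$; your claimed intermediate closed forms all agree with the paper's. The only difference is that you defer the exponent bookkeeping, which the paper also leaves implicit.
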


\begin{proof} The first double sum, after replacing $r$ by $2r$, may be rewritten,
after using \eqref{spcard}, \eqref{Oevencard}, and Lemma~\ref{ourmainlemma2}, as
$$
\frac{1}{2}\left( G(uq^2,u^2q^2,q^2,2,0)+G(uq,u^2q^2,q^2,2,0)\right).
$$
The last two double sums, after replacing $r$ by $2r+2$ and $2r+1$
respectively, are
$$
\frac{1}{2}\left( q^4G(u/q^2,u^2q^4,q^2,2,2)+q^1 G(u,u^2q^4,q^2,2,1)\right).
$$
The identity
$$
\begin{aligned}
\frac{1}{2}&\left(G(uq^2,u^2q^2,q^2,2,0)+ q^4G(u/q^2,u^2q^4,q^2,2,2)
+q^1 G(u,u^2q^4,q^2,2,1)\right)\\
=&
\frac{1}{2(1-uq)} \frac{(-u;1/q^2)_\infty}{(u^2;1/q^2)_\infty}
\end{aligned}
$$
completes the proof.
\end{proof}

Theorem \ref{GuralsumO-} will be useful for negative type orthogonal groups in even characteristic.

\begin{theorem}
\label{GuralsumO-} For $q>1$ and $|u|<1/q$,
\begin{eqnarray*}
& & \sum_{n \geq 0} q^{n^2}u^n \left[ \sum_{r=0 \atop r \ even}^{n-1} \frac{1}{A_r} + \sum_{r=1 \atop r \ even}^n \frac{1}{B_r} + \sum_{r=1 \atop r \ odd}^n \frac{1}{C_r}
    \right] \\
& = & \frac{1}{2(1-uq)} \frac{\prod_{i \geq 1} (1+u/q^{2(i-1)})}{\prod_{i \geq 1} (1-u^2/q^{2(i-1)})}
- \frac{1}{2} \frac{\prod_{i \geq 1} (1+u/q^{2i-1})}{\prod_{i \geq 1} (1-u^2/q^{2(i-1)})},
\end{eqnarray*} where

\[ A_r = q^{r(r-1)/2+r(2n-2r)} |Sp(r,q)| |O^-(2n-2r,q)| \]

\[ B_r = 2q^{r(r+1)/2+(r-1)(2n-2r)-1} |Sp(r-2,q)| |Sp(2n-2r,q)| \]

\[ C_r = 2q^{r(r-1)/2+(r-1)(2n-2r)} |Sp(r-1,q)| |Sp(2n-2r,q)|. \]
\end{theorem}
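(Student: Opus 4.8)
The plan is to imitate the proof of Theorem~\ref{GuralsumO+} almost verbatim, taking advantage of the fact that the quantities $B_r$ and $C_r$ occurring here are \emph{identical} to those of Theorem~\ref{GuralsumO+}. Consequently the two double sums $\sum_{r\ even}1/B_r$ and $\sum_{r\ odd}1/C_r$ contribute exactly what they did before: after the substitutions $r\mapsto 2r+2$ and $r\mapsto 2r+1$ and an application of \eqref{spcard} and Lemma~\ref{ourmainlemma2}, their total is $\frac12\bigl(q^4\,G(u/q^2,u^2q^4,q^2,2,2)+q\,G(u,u^2q^4,q^2,2,1)\bigr)$. The only genuinely new input is the evaluation of the first double sum $\sum_{r\ even}^{\,n-1}1/A_r$, where now $A_r=q^{r(r-1)/2+r(2n-2r)}|Sp(r,q)|\,|O^-(2n-2r,q)|$ carries an $O^-$ factor in place of the $O^+$ factor.

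For that sum I would substitute $r\mapsto 2r$ and insert \eqref{spcard} together with the formula \eqref{Oevencard} for $|O^-(2m,q)|$. The key point is that $1/|O^-(2m,q)| = (q^m-1)/\bigl(2q^{2m^2}(1/q^2;1/q^2)_m\bigr)$, so that where the $O^+$ computation produced the factor $(q^m+1)$ — and hence, via Lemma~\ref{ourmainlemma2}, the \emph{sum} $\frac12\bigl(G(uq^2,u^2q^2,q^2,2,0)+G(uq,u^2q^2,q^2,2,0)\bigr)$ — the $O^-$ computation produces $(q^m-1)$ and hence the \emph{difference} $\frac12\bigl(G(uq^2,u^2q^2,q^2,2,0)-G(uq,u^2q^2,q^2,2,0)\bigr)$. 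One should also note that running the even-$r$ sum only up to $n-1$ rather than $n$ is harmless: the missing $r=n$ term (present only when $n$ is even) would involve $1/|O^-(0,q)|$, which is $0$ by the convention recorded just after \eqref{Oevencard}, so one may just as well sum to $n$ and apply Lemma~\ref{ourmainlemma2} exactly as in Theorem~\ref{GuralsumO+}.

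Adding the three pieces, the left-hand side becomes $\frac12\bigl(G(uq^2,u^2q^2,q^2,2,0)+q^4\,G(u/q^2,u^2q^4,q^2,2,2)+q\,G(u,u^2q^4,q^2,2,1)\bigr)-\frac12\,G(uq,u^2q^2,q^2,2,0)$. The first bracket is precisely the combination already evaluated inside the proof of Theorem~\ref{GuralsumO+}, equal to $\frac{1}{2(1-uq)}\frac{(-u;1/q^2)_\infty}{(u^2;1/q^2)_\infty}$, while a direct application of Lemma~\ref{ourmainlemma2} gives $G(uq,u^2q^2,q^2,2,0)=\frac{(-u/q;1/q^2)_\infty}{(u^2;1/q^2)_\infty}$. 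Rewriting the $q$-Pochhammer symbols as the infinite products $(-u;1/q^2)_\infty=\prod_{i\ge1}(1+u/q^{2(i-1)})$, $(-u/q;1/q^2)_\infty=\prod_{i\ge1}(1+u/q^{2i-1})$, and $(u^2;1/q^2)_\infty=\prod_{i\ge1}(1-u^2/q^{2(i-1)})$ then yields exactly the claimed right-hand side. I do not expect a serious obstacle here beyond bookkeeping; the one thing to watch is correctly tracking the $O^-$-versus-$O^+$ sign change (this is exactly what flips the $+$ of Theorem~\ref{GuralsumO+} to the $-$ of the present statement) and confirming that the degenerate value $1/|O^-(0,q)|=0$ is properly absorbed by truncating the first sum at $n-1$.
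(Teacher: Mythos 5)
Your proposal is correct and follows exactly the route the paper takes: the paper's own proof simply says that the only change from Theorem~\ref{GuralsumO+} is the sign flip coming from the $(q^m-1)$ numerator of $1/|O^-(2m,q)|$, together with the observation that the $A_r$ sum may be extended to $r=n$ since $1/|O^-(0,q)|=0$. Your write-up just spells out the resulting combination $\frac{1}{2}\bigl(G(uq^2,u^2q^2,q^2,2,0)+q^4G(u/q^2,u^2q^4,q^2,2,2)+qG(u,u^2q^4,q^2,2,1)\bigr)-\frac{1}{2}G(uq,u^2q^2,q^2,2,0)$ in more detail than the paper does.
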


\begin{proof} The only change to the proof of Theorem~\ref{GuralsumO+},
is that the first double sum has a minus sign in the numerator factor for
$|O^-(2n-2r,q)|$. The sum for $1/A_r$ can be extended to $r=n$ since
$1/|O^-(0,q)|=0$ by \eqref{Oevencard}.
\end{proof}

\section{General linear and unitary groups} \label{GL}

To begin we study asymptotics of the number of involutions in $GL(n,q)$, when $q$ is fixed and $n \rightarrow \infty$.
Throughout we let $i_{GL}(n,q)$ denote the number of involutions in $GL(n,q)$. The following explicit formula for $i_{GL}(n,q)$
can be found in Section 1.11 of \cite{M}.

\begin{prop} \label{GLinv} The number of involutions in $GL(n,q)$ is
$$
i_{GL}(n,q)=
\begin{cases}
\sum_{r=0}^{\lfloor n/2 \rfloor}
\frac{|GL(n,q)|}{q^{r(2n-3r)} |GL(r,q)| |GL(n-2r,q)|}, {\text{ for $q$ even,}}\\
\sum_{r=0}^{n} \frac{|GL(n,q)|}
{|GL(r,q)| |GL(n-r,q)|},{\text{ for $q$ odd.}}
\end{cases}
$$
\end{prop}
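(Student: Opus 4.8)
The plan is to reduce the enumeration of involutions to a sum over conjugacy classes, parametrized by the isomorphism type of the module $V = \mathbb{F}_q^n$ as a module over $\mathbb{F}_q[t]/(t^2-1)$, and then to count each class by the orbit-stabilizer theorem. The two cases ($q$ even versus $q$ odd) genuinely differ because $t^2 - 1 = (t-1)^2$ in characteristic $2$, whereas $t^2 - 1 = (t-1)(t+1)$ with distinct factors in odd characteristic.

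First I would treat the odd characteristic case, which is the easier one. Here $-1 \neq 1$ in $\mathbb{F}_q$, so an involution $g \in GL(n,q)$ is semisimple and $V$ decomposes as $V = V_+ \oplus V_-$ where $V_\pm$ is the $(\pm 1)$-eigenspace of $g$. Conversely, any ordered direct sum decomposition $V = W_+ \oplus W_-$ determines a unique involution. Writing $\dim V_- = r$ (so $\dim V_+ = n - r$, with $r$ ranging over $0, \dots, n$), the number of such decompositions is the number of ways to choose an $r$-dimensional subspace together with a complement — but it is cleaner to use orbit-stabilizer directly: $GL(n,q)$ acts transitively on the set of involutions with a fixed eigenvalue multiplicity $r$, and the centralizer of such an involution is $GL(r,q) \times GL(n-r,q)$ (it must preserve each eigenspace). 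Hence the number of involutions with $\dim V_- = r$ is $|GL(n,q)| / (|GL(r,q)|\,|GL(n-r,q)|)$, and summing over $r$ gives the stated formula.

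Next I would handle the even characteristic case, where the argument is more delicate since $g$ need not be semisimple: $g$ is an involution iff $(g - 1)^2 = 0$, i.e. $g = 1 + N$ with $N^2 = 0$. The relevant invariant is $r = \operatorname{rank}(N) = \dim(\operatorname{im} N)$, and the condition $N^2 = 0$ forces $\operatorname{im} N \subseteq \ker N$, so $\dim(\ker N) = n - r \geq r$, giving $r \leq \lfloor n/2 \rfloor$. The module-theoretic picture is that $V$, as an $\mathbb{F}_q[t]/(t-1)^2$-module, is a direct sum of $r$ copies of the indecomposable $2$-dimensional module and $n - 2r$ copies of the trivial module; two involutions are conjugate iff they have the same $r$. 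The count of involutions with a given $r$ is again $|GL(n,q)|$ divided by the order of the centralizer, so the main computational step is to verify that the centralizer of such an involution has order $q^{r(2n-3r)}\,|GL(r,q)|\,|GL(n-2r,q)|$. I would do this by choosing a basis adapted to the decomposition and computing the stabilizer of $N$ in $GL(n,q)$ directly: the centralizer is a parabolic-like subgroup with Levi part $GL(r,q) \times GL(n-2r,q)$ (acting on $\operatorname{im} N$, which is identified with a space of dimension $r$, and on a complement of $\ker N$ of dimension $n - 2r$) and a unipotent radical of the appropriate dimension, and the exponent $r(2n - 3r)$ is the dimension of that unipotent radical. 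This centralizer order computation is the main obstacle; it is a standard but somewhat fiddly linear-algebra calculation, and the exact shape of the exponent must be matched carefully. Summing $|GL(n,q)|/(q^{r(2n-3r)}|GL(r,q)||GL(n-2r,q)|)$ over $r = 0, \dots, \lfloor n/2\rfloor$ then completes the proof. Since the proposition merely cites \cite{M} for this formula, an alternative and fully rigorous route is simply to quote Macdonald's computation of conjugacy class sizes in $GL(n,q)$ and specialize to the partitions $(2^r 1^{n-2r})$ and $(1^{n-r})$ on the linear factors $t-1$ (resp. $t-1, t+1$).
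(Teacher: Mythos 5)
Your proof is correct, but note that the paper does not actually prove this proposition at all: it simply quotes the formula from Section 1.11 of \cite{M}. So you have supplied an argument where the authors supply a citation. The argument you give is the natural one, and it is exactly parallel to the proofs the paper \emph{does} write out for the analogous symplectic and orthogonal counts (Lemma \ref{countSp} and Theorem \ref{countSpeven}): classify the classes by the module structure of $V$ over $\mathbb{F}_q[t]/(t^2-1)$, then apply orbit--stabilizer. In odd characteristic the eigenspace decomposition and the centralizer $GL(r,q)\times GL(n-r,q)$ are immediate. In even characteristic your identification of the class invariant as $r=\operatorname{rank}(g-1)$ with $r\le\lfloor n/2\rfloor$ is right, and the centralizer order you need to verify does come out correctly: for a unipotent element of Jordan type $(2^r,1^{n-2r})$ the centralizer in $GL(n,q)$ has order $q^{(n-r)^2+r^2}\prod_{j=1}^{r}(1-q^{-j})\prod_{j=1}^{n-2r}(1-q^{-j})$, and since $(n-r)^2+r^2 = r(2n-3r)+r^2+(n-2r)^2$ this equals $q^{r(2n-3r)}|GL(r,q)|\,|GL(n-2r,q)|$, matching your description of a group with Levi part $GL(r,q)\times GL(n-2r,q)$ (the $GL(r,q)$ acting compatibly on $\operatorname{im}N$ and $V/\ker N$, which $N$ identifies) and unipotent radical of order $q^{r(2n-3r)}$. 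Your fallback of specializing Macdonald's/Morrison's general conjugacy-class-size formula to the types $(2^r1^{n-2r})$ at $t-1$, resp.\ $(1^{n-r}),(1^{r})$ at $t\mp1$, is of course exactly what the citation to \cite{M} amounts to; the only thing your self-contained route buys is independence from that reference, at the cost of the centralizer computation you correctly flag as the fiddly step.
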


To begin we treat the case of even characteristic.

\begin{theorem} \label{GLlimeven}
\begin{enumerate}
\item Let $n$ be even and let $q$ be even and fixed. Then
\begin{eqnarray*}
\lim_{n \rightarrow \infty} \frac{i_{GL}(n,q)}{q^{n^2/2}} & = &
\frac{1}{2} \left[ \prod_{i \geq 1} (1+\sqrt{q}/q^i) +
 \prod_{i \geq 1} (1-\sqrt{q}/q^i) \right] \\
 & = & \prod_{i\ge 1} \frac{(1+q^{5-8i})(1+q^{3-8i})(1-q^{-8i})}
{(1-q^{-2i})}.
\end{eqnarray*}

\item Let $n$ be odd and let $q$ be even and fixed. Then
\begin{eqnarray*}
\lim_{n \rightarrow \infty} \frac{i_{GL}(n,q)}{q^{(n^2-1)/2}} & = &
\frac{\sqrt{q}}{2} \left[ \prod_{i \geq 1} (1+\sqrt{q}/q^i) -
 \prod_{i \geq 1} (1-\sqrt{q}/q^i) \right] \\
& = & \prod_{i\ge 1} \frac{(1+q^{7-8i})(1+q^{1-8i})(1-q^{-8i})}
{(1-q^{-2i})}.
\end{eqnarray*}
\end{enumerate}
\end{theorem}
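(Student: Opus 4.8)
The plan is to combine the exact formula of Proposition~\ref{GLinv} (for $q$ even) with the generating function of Theorem~\ref{sumprodGLeven}, to extract the $n$-th coefficient with the Darboux Lemma~\ref{poles}, and finally to repackage the resulting sum and difference of infinite products as single infinite products via the Jacobi triple product sieving of Proposition~\ref{sieveJTP}. The first step is to write the normalized count as a coefficient of a fixed power series. Let $g(u) = \prod_{i \geq 1}(1+u/q^i)/\prod_{i \geq 1}(1-u^2/q^i)$ be the right-hand side of Theorem~\ref{sumprodGLeven}. Since $|GL(n,q)| = q^{n^2}(1/q;1/q)_n$ and $n^2 - \binom{n}{2} = (n^2+n)/2$, Proposition~\ref{GLinv} and Theorem~\ref{sumprodGLeven} give $i_{GL}(n,q) = q^{(n^2+n)/2}(1/q;1/q)_n\,[u^n]g(u)$. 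Dividing by $q^{n^2/2}$ and absorbing the factor $q^{n/2}$ into $u$ gives
\[ \frac{i_{GL}(n,q)}{q^{n^2/2}} = (1/q;1/q)_n\,[u^n]\,h(u), \qquad h(u) := g(\sqrt{q}\,u) = \frac{\prod_{i \geq 1}(1+u\sqrt{q}/q^i)}{(1-u^2)\prod_{i \geq 1}(1-u^2/q^i)}, \]
the substitution $u \mapsto \sqrt{q}\,u$ being legitimate because the condition $u^2 < q$ of Theorem~\ref{sumprodGLeven} becomes $|u| < 1$.

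The second step is to apply Lemma~\ref{poles} to $h$. Its only singularities in $|u| \leq 1$ are simple poles at $u = \pm1$ coming from the factor $1-u^2$: the numerator is entire and nonzero at $\pm1$, and $\prod_{i \geq 1}(1-u^2/q^i)$ vanishes only for $|u| \geq \sqrt{q} > 1$, so $r = 1$. The two residual functions are $g_1(u) = \prod_{i \geq 1}(1+u\sqrt{q}/q^i)/\bigl((1+u)\prod_{i \geq 1}(1-u^2/q^i)\bigr)$ at $u = 1$ and $g_2(u) = \prod_{i \geq 1}(1+u\sqrt{q}/q^i)/\bigl((1-u)\prod_{i \geq 1}(1-u^2/q^i)\bigr)$ at $u = -1$, with values $g_1(1) = \prod_{i \geq 1}(1+\sqrt{q}/q^i)/(2(1/q;1/q)_\infty)$ and $g_2(-1) = \prod_{i \geq 1}(1-\sqrt{q}/q^i)/(2(1/q;1/q)_\infty)$. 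Hence
\[ [u^n]h(u) = \frac{1}{2(1/q;1/q)_\infty}\left[\prod_{i \geq 1}(1+\sqrt{q}/q^i) + (-1)^n\prod_{i \geq 1}(1-\sqrt{q}/q^i)\right] + o(1). \]
Since $(1/q;1/q)_n \to (1/q;1/q)_\infty$ as $n \to \infty$, multiplying gives $\lim_n i_{GL}(n,q)/q^{n^2/2} = \frac{1}{2}[\prod(1+\sqrt{q}/q^i) + \prod(1-\sqrt{q}/q^i)]$ for $n$ even; for $n$ odd the extra factor $\sqrt{q}$ coming from $q^{(n^2-1)/2} = q^{n^2/2}/\sqrt{q}$ together with $(-1)^n = -1$ gives $\frac{\sqrt{q}}{2}[\prod(1+\sqrt{q}/q^i) - \prod(1-\sqrt{q}/q^i)]$. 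This is the first equality in each of (1) and (2).

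The third step is to convert these to single products using Proposition~\ref{sieveJTP} with $R = q^{-2}$ and $X = q^{-1/2}$. Splitting the index set $\{i \geq 1\}$ into odd and even $i$ identifies the product of the two $X$-dependent factors of $F(\pm q^{-1/2}, q^{-2})$ with $\prod_{i \geq 1}(1 \pm \sqrt{q}/q^i)$, so $F(\pm q^{-1/2}, q^{-2}) = (q^{-2};q^{-2})_\infty\prod_{i \geq 1}(1 \pm \sqrt{q}/q^i)$. By Proposition~\ref{sieveJTP}, $\frac{1}{2}(F(q^{-1/2},q^{-2}) + F(-q^{-1/2},q^{-2})) = F(q^{-3},q^{-8})$ and $\frac{1}{2}(F(q^{-1/2},q^{-2}) - F(-q^{-1/2},q^{-2})) = q^{-1/2}F(q^{-7},q^{-8})$. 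Expanding $F(q^{-3},q^{-8}) = \prod_{i \geq 1}(1-q^{-8i})(1+q^{5-8i})(1+q^{3-8i})$ and $F(q^{-7},q^{-8}) = \prod_{i \geq 1}(1-q^{-8i})(1+q^{1-8i})(1+q^{7-8i})$ and dividing by $(q^{-2};q^{-2})_\infty = \prod_{i \geq 1}(1-q^{-2i})$ turns the bracketed sum and difference of the previous step into exactly the claimed infinite products, the leftover $q^{-1/2}$ in the odd case being cancelled by the $\sqrt{q}$ prefactor.

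I expect the only genuine difficulty to be bookkeeping: tracking the half-integral powers of $q$ under the rescaling $u \mapsto \sqrt{q}\,u$, matching the shifted $q$-Pochhammer symbols against the factors $1+q^{5-8i}$, $1+q^{3-8i}$, $1+q^{7-8i}$, $1+q^{1-8i}$ of the target formulae, and verifying that the $o(1)$ in Lemma~\ref{poles} together with $(1/q;1/q)_n - (1/q;1/q)_\infty = O(q^{-n})$ yields honest convergence of the limit. No tool beyond the lemmas and propositions already established should be needed.
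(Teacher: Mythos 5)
Your proposal is correct and follows essentially the same route as the paper's proof: rescale $u \mapsto \sqrt{q}\,u$ in Theorem \ref{sumprodGLeven}, identify the $n$-th coefficient with $i_{GL}(n,q)/q^{n^2/2}$ via Proposition \ref{GLinv}, apply Lemma \ref{poles} at the simple poles $u=\pm 1$, and convert the resulting sum and difference of products into single products using Proposition \ref{sieveJTP} with $X=Q^{1/2}$, $R=Q^2$. All the bookkeeping you flag (the powers $q^{(n^2+n)/2}$, the identification $F(Q^{1/2},Q^2)=(Q^2;Q^2)_\infty\prod_{i\ge1}(1+\sqrt{q}/q^i)$, and the cancellation of $\sqrt{q}\cdot Q^{1/2}$ in the odd case) checks out.
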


\begin{proof} Replacing $u$ by $u \sqrt{q}$ in Theorem \ref{sumprodGLeven} gives that
\begin{equation} \label{a}
\sum_{n \geq 0} u^n q^{n/2} q^{{n \choose 2}} \sum_{r=0}^{\lfloor n/2 \rfloor}
\frac{1}{q^{r(2n-3r)} |GL(r,q)| |GL(n-2r,q)|}
\end{equation} is equal to
\begin{equation} \label{b}
\frac{\prod_i (1+u \sqrt{q}/q^i)}{(1-u^2) \prod_i (1-u^2/q^i)}.
\end{equation}

First consider the $n \rightarrow \infty$ limit of the coefficient of $u^n$ in \eqref{a}. This is equal to
\begin{eqnarray*}
& & \lim_{n \rightarrow \infty} \frac{q^{n/2}}{(q^n-1) \cdots (q-1)}
\sum_{r=0}^{\lfloor n/2 \rfloor} \frac{|GL(n,q)|}
{q^{r(2n-3r)} |GL(r,q)| |GL(n-2r,q)|} \\
& = & \lim_{n \rightarrow \infty} \frac{q^{n/2}}{(q^n-1) \cdots (q-1)} i_{GL}(n,q) \\
& = & \frac{1}{\prod_i (1-1/q^i)} \lim_{n \rightarrow \infty} \frac{i_{GL}(n,q)}{q^{n^2/2}}.
\end{eqnarray*}  The first equality is from Proposition \ref{GLinv}.

Now consider \eqref{b}. Except for poles at $u=\pm 1$, it is
analytic in a disc of radius greater than 1.
Hence Lemma \ref{poles} gives that for $n$ even, the
$n \rightarrow \infty$ limit of the coefficient of $u^n$ in
\eqref{b} is
\begin{equation}
\frac{1}{2} \left[ \frac{\prod_i (1+\sqrt{q}/q^i)}{\prod_i (1-1/q^i)} +
 \frac{\prod_i (1-\sqrt{q}/q^i)}{\prod_i (1-1/q^i)} \right].
\end{equation}
It follows that for $n$ even,
\[
\lim_{n \rightarrow \infty}
\frac{i_{GL}(n,q)}{q^{n^2/2}} = \frac{1}{2}
\left[ \prod_{i \geq 1} (1+\sqrt{q}/q^i) +
 \prod_{i \geq 1} (1-\sqrt{q}/q^i) \right]. \] Letting $Q=1/q$, this is equal to

\begin{eqnarray*}
& & \frac{1}{2}
\left( (-Q^{1/2};Q)_\infty +(Q^{1/2};Q)_\infty\right)\\
& = & \frac{1}{2}
\left( (-Q^{1/2};Q^2)_\infty(-Q^{3/2};Q^2)_\infty
+(Q^{1/2};Q^2)_\infty(Q^{3/2};Q^2)_\infty\right)\\
& = &\frac{1}{2}\frac{1}{(Q^2;Q^2)_\infty}
\left( F(Q^{1/2},Q^2) +F(-Q^{1/2},Q^2)\right)\\
& = & \frac{1}{(Q^2;Q^2)_\infty} F(Q^3,Q^8),
\end{eqnarray*} where $F$ is defined in Proposition \ref{sieveJTP} and where in the last step we have applied Proposition~\ref{sieveJTP}
with $X=Q^{1/2}$ and $R=Q^2.$

Similarly, if $n$ is odd, Lemma \ref{poles} gives that the $n \rightarrow \infty$ limit of the coefficient of $u^n$ in
\eqref{b} is
\begin{equation}  \frac{1}{2}
\left[
\frac{\prod_i (1+\sqrt{q}/q^i)}{\prod_i (1-1/q^i)} -
 \frac{\prod_i (1-\sqrt{q}/q^i)}{\prod_i (1-1/q^i)}.
 \right]
\end{equation} It follows that for $n$ odd,
\[ \lim_{n \rightarrow \infty} \frac{i_{GL}(n,q)}{q^{n^2/2}} =
\frac{1}{2} \left[ \prod_{i \geq 1} (1+\sqrt{q}/q^i) -
 \prod_{i \geq 1} (1-\sqrt{q}/q^i) \right] .\] Letting $Q=1/q$ and arguing as in the $n$ even
 case gives that this is equal to
 \[ \frac{1}{2} \frac{1}{(Q^2;Q^2)_\infty}
\left( F(Q^{1/2},Q^2) -F(-Q^{1/2},Q^2)\right) = \frac{1}{\sqrt{q}} \frac{1}{(Q^2;Q^2)_\infty} F(Q^7,Q^8).
 \] This proves the theorem. \end{proof}

{\it Remarks:}

\begin{enumerate}

\item The expression in part 1 of Theorem \ref{GLlimeven} is equal to $1.6793..$ when $q=2$, and
tends to $1$ as $q \rightarrow \infty$. The expression in part 2 of Theorem \ref{GLlimeven} is equal
to $2.1912..$ when $q=2$, and tends to $1$ as $q \rightarrow \infty$.

\item In parts 1 and 2 of Theorem \ref{GLlimeven}, one can rewrite the $(1-q^{-8i})/(1-q^{-2i})$
as $(1+q^{-4i})(1+q^{-2i})$. This makes it clear that the limits in the theorem decrease
monotonically to $1$ as $q \rightarrow \infty$. Our reason for not doing this in the statement
of Theorem \ref{GLlimeven} is to emphasize the role in the numerator played by the base $8$.

\end{enumerate}

Next we treat odd characteristic. Our main result is the following theorem.

\begin{theorem} \label{GLlimodd}
\begin{enumerate}
\item Let $n$ be even and let $q$ be odd and fixed. Then
\begin{eqnarray*}
\lim_{n \rightarrow \infty}
\frac{i_{GL}(n,q)}{q^{n^2/2}} & = & \frac{1}{2} \left[ \prod_{i \geq 1} (1+\sqrt{q}/q^i)^2 +
 \prod_{i \geq 1} (1-\sqrt{q}/q^i)^2 \right] \\
& = & \prod_{i\ge 1} \frac{(1+q^{2-4i})^2(1-q^{-4i})}{(1-q^{-i})}.
\end{eqnarray*}

\item Let $n$ be odd and let $q$ be odd and fixed. Then
\begin{eqnarray*}
\lim_{n \rightarrow \infty}
\frac{i_{GL}(n,q)}{q^{(n^2-1)/2}} & = & \frac{\sqrt{q}}{2} \left[ \prod_{i \geq 1} (1+\sqrt{q}/q^i)^2 -
 \prod_{i \geq 1} (1-\sqrt{q}/q^i)^2 \right] \\
& = & 2 \prod_{i\ge 1} \frac{(1+q^{-4i})(1-q^{-8i})}{(1-q^{-i})}.
\end{eqnarray*}
\end{enumerate}
\end{theorem}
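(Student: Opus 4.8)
The plan is to follow the proof of Theorem~\ref{GLlimeven} almost verbatim, with Theorem~\ref{sumprodGLodd} in place of Theorem~\ref{sumprodGLeven} and the odd-$q$ case of Proposition~\ref{GLinv} in place of the even-$q$ case. First I would substitute $u\mapsto u\sqrt{q}$ in Theorem~\ref{sumprodGLodd}. Since this sends $\prod_{i\ge1}(1-u^2/q^i)$ to $(1-u^2)\prod_{i\ge1}(1-u^2/q^i)$, it yields
\[
\sum_{n\ge0} u^n q^{n/2} q^{\binom n2}\sum_{r=0}^n \frac{1}{|GL(r,q)|\,|GL(n-r,q)|}
= \frac{\prod_{i\ge1}(1+u\sqrt q/q^i)^2}{(1-u^2)\prod_{i\ge1}(1-u^2/q^i)}.
\]
Using $|GL(n,q)|=q^{n^2}\prod_{i=1}^n(1-q^{-i})$ and the odd-$q$ formula of Proposition~\ref{GLinv}, the coefficient of $u^n$ on the left is exactly $i_{GL}(n,q)\big/\big(q^{n^2/2}\prod_{i=1}^n(1-q^{-i})\big)$, so its limit as $n\to\infty$ equals $\frac{1}{\prod_{i\ge1}(1-q^{-i})}\lim_{n\to\infty} i_{GL}(n,q)/q^{n^2/2}$.

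Second, I would apply Darboux's Lemma~\ref{poles} to the right-hand side. For $q>1$ the factor $\prod_{i\ge1}(1-u^2/q^i)$ is zero-free for $|u|\le 1$, so on $|u|=1$ the only singularities are simple poles at $u=1$ and $u=-1$ coming from $1-u^2$; writing the right-hand side as $g_1(u)/(1-u)+g_{-1}(u)/(1+u)$ near these poles and evaluating $g_{\pm1}(\pm1)$ shows that the $n\to\infty$ limit of the coefficient of $u^n$ is $\frac{1}{2\prod_i(1-q^{-i})}\big(\prod_i(1+\sqrt q/q^i)^2+(-1)^n\prod_i(1-\sqrt q/q^i)^2\big)$. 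Comparing with the previous paragraph gives $\lim_n i_{GL}(n,q)/q^{n^2/2}=\frac{1}{2}\big(\prod_i(1+\sqrt q/q^i)^2\pm\prod_i(1-\sqrt q/q^i)^2\big)$, sign $+$ for $n$ even and $-$ for $n$ odd; for $n$ odd one divides by $q^{(n^2-1)/2}=q^{n^2/2}/\sqrt q$ instead, which produces the displayed factor $\sqrt q/2$.

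Third, I would convert to single products. With $Q=1/q$ one has $\prod_{i\ge1}(1+\sqrt q/q^i)=(-Q^{1/2};Q)_\infty$, and the one new ingredient beyond Theorem~\ref{GLlimeven} is the observation that $F(R^{1/2},R)=(R;R)_\infty(-R^{1/2};R)_\infty^2$, immediate from the product form of $F$ in Proposition~\ref{sieveJTP}; hence $(-Q^{1/2};Q)_\infty^2=F(Q^{1/2},Q)/(Q;Q)_\infty$ and $(Q^{1/2};Q)_\infty^2=F(-Q^{1/2},Q)/(Q;Q)_\infty$. Applying Proposition~\ref{sieveJTP} with $X=Q^{1/2}$, $R=Q$ gives $\frac{1}{2}(F(Q^{1/2},Q)+F(-Q^{1/2},Q))=F(Q^2,Q^4)$ and $\frac{1}{2}(F(Q^{1/2},Q)-F(-Q^{1/2},Q))=Q^{1/2}F(Q^4,Q^4)$, so the $n$-even limit collapses to $F(Q^2,Q^4)/(Q;Q)_\infty=(-Q^2;Q^4)_\infty^2(Q^4;Q^4)_\infty/(Q;Q)_\infty$ and the $n$-odd limit to $\sqrt q\,Q^{1/2}F(Q^4,Q^4)/(Q;Q)_\infty=2(-Q^4;Q^4)_\infty^2(Q^4;Q^4)_\infty/(Q;Q)_\infty$ (using $(-1;Q^4)_\infty=2(-Q^4;Q^4)_\infty$ and $\sqrt q\,Q^{1/2}=1$). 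Translating back to $q$ gives the even case exactly as stated, and the elementary identity $(-Q^4;Q^4)_\infty(Q^4;Q^4)_\infty=(Q^8;Q^8)_\infty$ rewrites the odd case as $2\prod_{i\ge1}(1+q^{-4i})(1-q^{-8i})/(1-q^{-i})$.

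I expect the only genuinely delicate point to be this last paragraph: recognizing that the squared Pochhammer $(-R^{1/2};R)_\infty^2$ is itself a Jacobi triple product, so that Proposition~\ref{sieveJTP} acts on the squared products exactly as it did on the single products in Theorem~\ref{GLlimeven}. The pole analysis and the identification of the Taylor coefficient with $i_{GL}(n,q)/q^{n^2/2}$ are a line-by-line transcription of the even-characteristic argument.
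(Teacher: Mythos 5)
Your proposal is correct and follows the paper's own argument essentially line by line: substitute $u\mapsto u\sqrt q$ in Theorem \ref{sumprodGLodd}, identify the $u^n$ coefficient with $i_{GL}(n,q)/(q^{n^2/2}\prod_{i=1}^n(1-q^{-i}))$ via Proposition \ref{GLinv}, apply Lemma \ref{poles} at the poles $u=\pm1$, and then use Proposition \ref{sieveJTP} with $X=Q^{1/2}$, $R=Q$. Your final paragraph merely makes explicit the conversion of $F(Q^2,Q^4)$ and $F(Q^4,Q^4)$ (including $(-1;Q^4)_\infty=2(-Q^4;Q^4)_\infty$ and $(-Q^4;Q^4)_\infty(Q^4;Q^4)_\infty=(Q^8;Q^8)_\infty$) into the stated products, a step the paper leaves to the reader, and your handling of the normalization $q^{(n^2-1)/2}$ in the odd case is correct.
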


\begin{proof} Replacing $u$ by $u \sqrt{q}$ in Theorem \ref{sumprodGLodd} gives the equation

\begin{equation} \label{big2}
\sum_{n \geq 0} u^n q^{n/2} q^{{n \choose 2}}
\sum_{r=0}^{n}\frac{1}{|GL(r,q)| |GL(n-r,q)|} =
\frac{\prod_i (1+u \sqrt{q}/q^i)^2}{(1-u^2) \prod_i (1-u^2/q^i)}.
\end{equation}

Consider the $n \rightarrow \infty$ limit of the coefficient of $u^n$
in the left hand side of \eqref{big2}. This is equal to
\begin{eqnarray*}
& & \lim_{n \rightarrow \infty}
\frac{q^{n/2}}{(q^n-1) \cdots (q-1)} \sum_{r=0}^{n} \frac{|GL(n,q)|}
{|GL(r,q)| |GL(n-r,q)|} \\
& = & \lim_{n \rightarrow \infty} \frac{q^{n/2}}{(q^n-1) \cdots (q-1)} i_{GL}(n,q) \\
& = & \frac{1}{\prod_i (1-1/q^i)} \lim_{n \rightarrow \infty} \frac{i_{GL}(n,q)}{q^{n^2/2}}.
\end{eqnarray*}  The first equality is from Proposition \ref{GLinv}.

Now consider the right hand side of \eqref{big2}.
Except for poles at $u=\pm 1$, it is analytic in a disc of radius greater than 1.
Hence Lemma \ref{poles} gives that for $n$ even, the $n \rightarrow \infty$ limit of the coefficient of $u^n$ in the
right hand side of \eqref{big2} is
\[  \frac{1}{2} \left[ \frac{\prod_i (1+\sqrt{q}/q^i)^2}{\prod_i (1-1/q^i)} +
 \frac{\prod_i (1-\sqrt{q}/q^i)^2}{\prod_i (1-1/q^i)} \right].
\] It follows that for $n$ even,
\[ \lim_{n \rightarrow \infty} \frac{i_{GL}(n,q)}{q^{n^2/2}} =
\frac{1}{2} \left[ \prod_{i \geq 1} (1+\sqrt{q}/q^i)^2 +
 \prod_{i \geq 1} (1-\sqrt{q}/q^i)^2 \right].\] Letting $Q=1/q$ and $F$
 be as in Proposition \ref{sieveJTP}, this is equal to
 \[  \frac{1}{2} \frac{1}{(Q;Q)_\infty}\left( F(Q^{1/2},Q)+F(-Q^{1/2},Q)\right)
 = \frac{1}{(Q;Q)_\infty} F(Q^2,Q^4), \] where we have used Proposition~\ref{sieveJTP}.

Similarly, if $n$ is odd, Lemma \ref{poles} gives that the $n \rightarrow \infty$ limit of the coefficient of $u^n$ in the
right hand side of \eqref{big2} is
\[
\frac{1}{2} \left[
\frac{\prod_i (1+\sqrt{q}/q^i)^2}{\prod_i (1-1/q^i)} -
\frac{\prod_i (1-\sqrt{q}/q^i)^2}{\prod_i (1-1/q^i)}
 \right]. \]
It follows that for $n$ odd,
\[ \lim_{n \rightarrow \infty} \frac{i_{GL}(n,q)}{q^{n^2/2}} =
\frac{1}{2} \left[ \prod_{i \geq 1} (1+\sqrt{q}/q^i)^2 -
 \prod_{i \geq 1} (1-\sqrt{q}/q^i)^2 \right] .\] Letting $Q=1/q$, this is equal to
\[ \frac{1}{2} \frac{1}{(Q;Q)_{\infty}} \left( F(Q^{1/2},Q)-F(-Q^{1/2},Q)\right)=
\frac{1}{\sqrt{q}} \frac{1}{(Q;Q)_\infty} F(Q^4,Q^4).
 \]

This proves the theorem.
\end{proof}

{\it Remark:} The expression in part 1 of Theorem \ref{GLlimodd} is equal to $2.1825..$ when
$q=3$, and tends to $1$ as $q \rightarrow \infty$. The expression in part 2 of Theorem
\ref{GLlimodd} is equal to $3.6147..$ when $q=3$, and tends to $2$ as $q \rightarrow \infty$.

Next we study asymptotics of the number of involutions in the finite unitary groups $U(n,q)$.
Recall that the (general) unitary group is defined to be the subgroup of $GL(n,q^2)$
stabilizing a non degenerate Hermitian form (any two such forms are equivalent and so
the group is unique up to conjugacy -- typically we assume the standard basis is an
orthonormal basis with respect to the Hermitian form).
The results will be an easy consequence of the general linear case.

Throughout we let $i_U(n,q)$ denote the number of involutions in $U(n,q)$.
Recall that in any characteristic
\begin{equation}
\label{unitcard}
|U(n,q)| = q^{n(n-1)/2} \prod_{i=1}^n (q^i-(-1)^i) = (-1)^n |GL(n,-q)|
\end{equation}

The following elementary fact was Proposition 4.4 of \cite{FV} in even characteristic
and was Proposition 4.9 of \cite{FV} in odd characteristic.

\begin{prop} \label{Uinv} The number of involutions in $U(n,q)$ is
$$
i_U(n,q)=
\begin{cases}
\sum_{r=0}^{\lfloor n/2 \rfloor}
\frac{|U(n,q)|}{q^{r(2n-3r)} |U(r,q)| |U(n-2r,q)|}, {\text{ for $q$ even,}}\\
\sum_{r=0}^{n} \frac{|U(n,q)|}
{|U(r,q)| |U(n-r,q)|},{\text{ for $q$ odd.}}
\end{cases}
$$
\end{prop}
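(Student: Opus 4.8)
The plan is to mimic Proposition~\ref{GLinv}: classify the involutions of $U(n,q)$ into conjugacy classes indexed by an integer $r$, identify the centralizer of a representative of each class, and then write $i_U(n,q)=\sum_r |U(n,q)|/|C_{U(n,q)}(g_r)|$. The two parities of $q$ behave quite differently, so I would treat them separately; throughout, $V=\mathbb{F}_{q^2}^n$ carries the nondegenerate Hermitian form $h$ fixed by $U(n,q)$, and I write $\perp$ for orthogonality with respect to $h$.

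For $q$ odd, an involution $g$ is semisimple with eigenvalues in $\{\pm 1\}$, so $V=V_+\oplus V_-$ is the sum of its $(\pm 1)$-eigenspaces. If $v\in V_+$ and $w\in V_-$ then $h(v,w)=h(gv,gw)=h(v,-w)=-h(v,w)$, so $h(v,w)=0$; hence $V=V_+\perp V_-$ and both summands are nondegenerate. Conversely every orthogonal decomposition of $V$ into two nondegenerate subspaces arises this way. Since any two nondegenerate Hermitian spaces of the same dimension over $\mathbb{F}_{q^2}/\mathbb{F}_q$ are isometric (there is a single isometry type in each dimension, in contrast with quadratic forms), Witt's theorem shows that the $U(n,q)$-class of $g$ is determined by $r=\dim V_-$, with $0\le r\le n$, and the centralizer of $g$ is $U(V_+)\times U(V_-)\cong U(n-r,q)\times U(r,q)$. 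Summing $|U(n,q)|/(|U(r,q)||U(n-r,q)|)$ over $0\le r\le n$ gives the odd-$q$ formula.

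For $q$ even, $g^2=1$ is equivalent to $f:=g-1$ being nilpotent with $f^2=0$. Since $g$ is unitary and $g^{-1}=g$, the adjoint of $g$ with respect to $h$ is $g$ itself, so $f$ is self-adjoint; consequently $\ker f=(\mathrm{im}\,f)^\perp$, and $f^2=0$ forces $\mathrm{im}\,f\subseteq\ker f=(\mathrm{im}\,f)^\perp$. Thus $W:=\mathrm{im}\,f$ is totally isotropic, so $r:=\mathrm{rank}\,f=\dim W$ satisfies $r\le\lfloor n/2\rfloor$, and $g$ is unipotent of Jordan type $(2^r1^{n-2r})$. It then remains to establish two facts, exactly parallel to the $GL$ computation: that for each $r$ with $0\le r\le\lfloor n/2\rfloor$ there is a unique $U(n,q)$-conjugacy class of such elements, and that the centralizer of a representative has order $q^{r(2n-3r)}|U(r,q)||U(n-2r,q)|$. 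Granting this, summing over $r$ yields the even-$q$ formula. Both points can be obtained directly: $U(n,q)$ is transitive on totally isotropic subspaces $W$ of dimension $r$ (Witt), the stabilizing parabolic $P$ is transitive on the admissible maps $f$ with $\mathrm{im}\,f=W$ via the action of its Levi, and the centralizer then unwinds as an extension of $U(r,q)\times U(n-2r,q)$ by a $q$-group of order $q^{r(2n-3r)}$; alternatively one may quote Wall's classification of conjugacy classes and centralizer orders in the finite classical groups, or the even-characteristic computation in \cite{FV}.

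I expect the even-characteristic step to be the main obstacle: one must both know that the unipotent class of type $(2^r1^{n-2r})$ does not split in $U(n,q)$ (such splitting does occur in $Sp$ and $O$ in characteristic two, which is exactly the source of difficulty in later sections) and pin down the order of the unipotent radical part of the centralizer. A convenient shortcut, if one is willing to invoke Ennola duality, is that the number of unipotent elements of a given Jordan type in $U(n,q)$ is obtained from the corresponding count in $GL(n,q)$ by the substitution $q\mapsto-q$ (Wall); combined with Proposition~\ref{GLinv} and the identity $|GL(m,-q)|=(-1)^m|U(m,q)|$ from \eqref{unitcard}, a short sign check---using $r(2n-3r)\equiv r\pmod 2$---recovers the even-$q$ formula, and the same formal substitution applied to the semisimple count of Proposition~\ref{GLinv} recovers the odd-$q$ formula.
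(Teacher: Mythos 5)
Your proposal is correct in outline, but be aware that the paper does not actually prove this proposition: it states it and cites Propositions 4.4 and 4.9 of \cite{FV}, so any argument you supply is by definition a different route. Your odd-$q$ half is complete and is exactly parallel to the paper's own proof of Lemma~\ref{countSp} for $Sp(2n,q)$: eigenspace decomposition, uniqueness of the isometry type of a nondegenerate Hermitian space in each dimension, Witt extension, and the product centralizer $U(r,q)\times U(n-r,q)$. Your even-$q$ half correctly isolates the two facts that actually need proving --- that the unipotent class of Jordan type $(2^r1^{n-2r})$ does not split in $U(n,q)$ (in contrast with $Sp$ and $O$ in characteristic $2$, where the alternating/non-alternating dichotomy exploited in Theorem~\ref{countSpeven} does cause splitting), and that the centralizer has order $q^{r(2n-3r)}|U(r,q)||U(n-2r,q)|$ --- but it only sketches them, deferring either to a parabolic computation that is not carried out (one would have to classify, under the congruence $h\mapsto Ah\bar{A}^{\top}$, the corner-block matrices arising from the isometry condition and verify that rank is a complete invariant in characteristic $2$) or to external sources (Wall, Ennola duality, or \cite{FV} itself). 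Since the paper likewise outsources the entire statement to \cite{FV}, that is a fair stopping point; and your $q\mapsto-q$ shortcut --- the sign check $r(2n-3r)\equiv r\pmod 2$ combined with $|U(m,q)|=(-1)^m|GL(m,-q)|$ from \eqref{unitcard} --- is a correct and efficient way to pass from Proposition~\ref{GLinv} to the unitary formula, in the same spirit as the paper's own use of the $q\mapsto-q$ substitution in the proofs of Theorems~\ref{Uqeven} and~\ref{Uqodd}.
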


Theorem \ref{Uqeven} treats the asymptotics of involutions in $U(n,q)$ with $q$ even.

\begin{theorem} \label{Uqeven}
\begin{enumerate}
\item Let $n$ be even and let $q$ be even and fixed. Then
\[ \lim_{n \rightarrow \infty} \frac{i_U(n,q)}{q^{n^2/2}} = \prod_{k \geq 1}
\frac{(1-q^{3-8k})(1-q^{5-8k})(1-q^{-8k})}{(1-q^{-2k})} .\]

\item Let $n$ be odd and let $q$ be even and fixed. Then
\[ \lim_{n \rightarrow \infty} \frac{i_U(n,q)}{q^{(n^2-1)/2}} = \prod_{k \geq 1}
 \frac{(1-q^{1-8k})(1-q^{7-8k})(1-q^{-8k})}{(1-q^{-2k})}.\]
\end{enumerate}
\end{theorem}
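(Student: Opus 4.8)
The plan is to obtain a ``sum$=$product'' identity for the unitary double sum of Proposition~\ref{Uinv} by substituting $q\mapsto -q$ in Theorem~\ref{sumprodGLeven}, and then to rerun the Darboux computation from the proof of Theorem~\ref{GLlimeven}. The key inputs are the relation $|U(j,q)|=(-1)^j|GL(j,-q)|$ from~\eqref{unitcard} and the congruence $r(2n-3r)\equiv r\pmod 2$, which gives $q^{r(2n-3r)}=(-1)^r(-q)^{r(2n-3r)}$. With these, all the signs in the $r$-sum of Proposition~\ref{Uinv} cancel in pairs, so that
\[
\sum_{r=0}^{\lfloor n/2\rfloor}\frac{1}{q^{r(2n-3r)}\,|U(r,q)|\,|U(n-2r,q)|}=(-1)^n\sum_{r=0}^{\lfloor n/2\rfloor}\frac{1}{(-q)^{r(2n-3r)}\,|GL(r,-q)|\,|GL(n-2r,-q)|}.
\]
Since both sides of Theorem~\ref{sumprodGLeven} are analytic in $q$ on $\{|q|>1\}$, we may replace $q$ by $-q$ there; combined with $q^{\binom n2}=(-1)^{\binom n2}(-q)^{\binom n2}$ and $\binom n2+n=\binom{n+1}2$, this yields the unitary identity
\[
\sum_{n\ge0}(-1)^{\binom{n+1}2}u^nq^{\binom n2}\sum_{r=0}^{\lfloor n/2\rfloor}\frac{1}{q^{r(2n-3r)}\,|U(r,q)|\,|U(n-2r,q)|}=\mathcal R(u),
\]
where $\mathcal R(u)$ is the product on the right-hand side of Theorem~\ref{sumprodGLeven} evaluated at $-q$.

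The next step is to dispose of the twist $(-1)^{\binom{n+1}2}$. Writing $i=\sqrt{-1}$ and using $(-1)^{\binom{n+1}2}=\tfrac12\bigl((1+i)i^n+(1-i)(-i)^n\bigr)$, the left-hand side above equals $\tfrac{1+i}2A(iu)+\tfrac{1-i}2A(-iu)$, where by Proposition~\ref{Uinv} $A(u)=\sum_{n\ge0}q^{\binom n2}\frac{i_U(n,q)}{|U(n,q)|}u^n$. Evaluating this at $u$ and at $-u$ and solving the resulting $2\times2$ linear system expresses $A(v)$ as a fixed $\mathbb{C}$-linear combination of $\mathcal R(iv)$ and $\mathcal R(-iv)$. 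I would then substitute $v\mapsto v\sqrt q$; since $q^{\binom n2+n/2}/|U(n,q)|=q^{-n^2/2}\big/\prod_{k=1}^n(1-(-1)^kq^{-k})$ by~\eqref{unitcard}, the left-hand side becomes $\sum_{n\ge0}\frac{i_U(n,q)}{q^{n^2/2}}\frac{v^n}{\prod_{k=1}^n(1-(-1)^kq^{-k})}$. On the right-hand side one has $(\pm iv\sqrt q)^2=-v^2q$, so the leading factor in the denominator of each $\mathcal R(\pm iv\sqrt q)$ becomes $1-v^2$, while all remaining singularities lie on $|v|\ge\sqrt q>1$; hence the right-hand side has the form $\Psi(v)/(1-v^2)$ with $\Psi$ analytic on $|v|<\sqrt q$ and, being $(1-v^2)$ times a power series with real coefficients, real on the real axis.

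Applying the Darboux lemma (Lemma~\ref{poles}) at the simple poles $v=\pm1$ gives $[v^n]\bigl(\Psi(v)/(1-v^2)\bigr)=\tfrac12\bigl(\Psi(1)+(-1)^n\Psi(-1)\bigr)+o(1)$, and since $\prod_{k=1}^n(1-(-1)^kq^{-k})\to P:=\prod_{k\ge1}(1-(-1)^kq^{-k})$, we get $\lim_{n\ \text{even}}i_U(n,q)/q^{n^2/2}=\tfrac P2(\Psi(1)+\Psi(-1))$ and $\lim_{n\ \text{odd}}i_U(n,q)/q^{n^2/2}=\tfrac P2(\Psi(1)-\Psi(-1))$. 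Computing $\Psi(\pm1)$ is then routine: the factor $\prod_{k\ge2}\bigl(1+q/(-q)^k\bigr)$ occurring in the denominator of $\mathcal R$ turns out to equal $P$ exactly, so the $P$'s cancel and the two limits become the real part, respectively minus the imaginary part, of $\prod_{k\ge1}\bigl(1+i(-1)^kq^{1/2-k}\bigr)$. Writing $Q=1/q$, this product is $(iQ^{1/2};Q^2)_\infty(-iQ^{3/2};Q^2)_\infty$, which is precisely the limit in Theorem~\ref{GLlimeven} with $q$ replaced by $-q$; converting to the displayed base-$8$ products is then the same manoeuvre as in the proof of that theorem, namely Proposition~\ref{sieveJTP} with $X=iQ^{1/2}$, $R=Q^2$, which sends $\tfrac12\bigl(F(iQ^{1/2},Q^2)\pm F(-iQ^{1/2},Q^2)\bigr)$ to $F(-Q^3,Q^8)$ and to $iQ^{1/2}F(-Q^7,Q^8)$, and dividing by $q^{(n^2-1)/2}$ in the odd case contributes the extra factor $q^{1/2}$.

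I expect the sign bookkeeping to be the main obstacle. Three unrelated sources of signs interact here: the $(-1)^{\binom{n+1}2}$ coming from $(-q)^{\binom n2}$, the $(-1)^j$ in $|U(j,q)|=(-1)^j|GL(j,-q)|$, and the fourth roots of unity introduced by the substitutions $v\mapsto\pm iv$. One must check that, after the $(-1)^{\binom{n+1}2}$ has been absorbed into the $\tfrac{1\pm i}2$ combination, the dominant poles of the resulting series sit exactly at $v=\pm1$ and that $\Psi$ really is real on the real axis, so that the even- and odd-$n$ limits genuinely exist rather than merely oscillating boundedly. Once these points are settled, the rest is a routine rerun of the general linear argument.
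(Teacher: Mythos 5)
Your proposal is correct, and its core idea---that by Proposition~\ref{Uinv} and \eqref{unitcard} the unitary involution count is the general linear count with $q$ replaced by $-q$---is exactly the paper's. The execution, however, diverges: the paper's proof is a two-line reduction, noting that for the relevant parities the normalizations $q^{n^2/2}$ and $q^{(n^2-1)/2}$ are unchanged under $q\mapsto -q$ and then quoting Theorem~\ref{GLlimeven} at $-q$ (implicitly relying on the identities of Section~\ref{Identities} and the Darboux analysis being valid for all $|q|>1$), whereas you rebuild the asymptotics from scratch: you substitute $-q$ into Theorem~\ref{sumprodGLeven}, which introduces the twist $(-1)^{\binom{n+1}{2}}$, remove it by a fourth-roots-of-unity filter and a $2\times 2$ linear system, and then rerun Lemma~\ref{poles} and Proposition~\ref{sieveJTP}. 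I checked the delicate points: the shifted denominator product $\prod_{k\ge 2}\bigl(1+(-1)^k q^{1-k}\bigr)$ does equal $P=\prod_{k\ge 1}\bigl(1-(-1)^k q^{-k}\bigr)$, so the $P$'s cancel; with the solved coefficients $\tfrac{1-i}{2}$ and $\tfrac{1+i}{2}$ the even and odd limits come out as $\mathrm{Re}\,Z$ and $-\mathrm{Im}\,Z$ for $Z=\prod_{k\ge 1}\bigl(1+i(-1)^k q^{1/2-k}\bigr)=(iQ^{1/2};Q^2)_\infty(-iQ^{3/2};Q^2)_\infty$; and Proposition~\ref{sieveJTP} with $X=iQ^{1/2}$, $R=Q^2$ yields $F(-Q^3,Q^8)$ and $iQ^{1/2}F(-Q^7,Q^8)$, which after dividing by $(Q^2;Q^2)_\infty$ (and multiplying by $q^{1/2}$ in the odd case) are precisely the stated base-$8$ products, so the sign bookkeeping you flagged does close up. What your longer route buys is an explicit justification of the continuation to negative $q$ that the paper's citation of Theorem~\ref{GLlimeven} leaves implicit; what it costs is the untwisting machinery, which the paper avoids by working with the finite-$n$ ratio identity directly.
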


\begin{proof} For part 1, it follows from Propositions \ref{GLinv} and \ref{Uinv} that
for $n$ and $q$ even, $\frac{i_U(n,q)}{q^{n^2/2}}$ is obtained by replacing $q$ by $-q$
in $\frac{i_{GL}(n,q)}{q^{n^2/2}}$. The result now follows from part 1 of Theorem
\ref{GLlimeven}.

For part 2, it follows from Propositions \ref{GLinv} and \ref{Uinv} that
for $n$ odd and $q$ even, $\frac{i_U(n,q)}{q^{(n^2-1)/2}}$ is obtained by replacing $q$ by $-q$
in $\frac{i_{GL}(n,q)}{q^{(n^2-1)/2}}$. The result now follows from part 2 of Theorem
\ref{GLlimeven}. \end{proof}

Next we treat the case of odd characteristic unitary groups.

\begin{theorem} \label{Uqodd}
\begin{enumerate}
\item Let $n$ be even and let $q$ be odd and fixed. Then
\[ \lim_{n \rightarrow \infty} \frac{i_U(n,q)}{q^{n^2/2}} =
\prod_{k \geq 1} \frac {(1+q^{2-4k})^2(1-q^{-4k})}{(1-(-1/q)^{k})}.\]

\item Let $n$ be odd and let $q$ be odd and fixed. Then
\[ \lim_{n \rightarrow \infty} \frac{i_U(n,q)}{q^{(n^2-1)/2}} =
2 \prod_{k \geq 1} \frac {(1+q^{-4k})^2(1-q^{-4k})}{(1-(-1/q)^{k})}.\]
\end{enumerate}
\end{theorem}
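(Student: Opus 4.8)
\emph{Proof plan.} The plan is to mimic the proof of Theorem~\ref{Uqeven}, feeding in the odd-characteristic results of Section~\ref{GL} in place of the even-characteristic ones. The first step is to pin down the exact relation between $i_U(n,q)$ and $i_{GL}$. By the odd-characteristic case of Proposition~\ref{Uinv}, $i_U(n,q)=\sum_{r=0}^n |U(n,q)|/\bigl(|U(r,q)|\,|U(n-r,q)|\bigr)$; by \eqref{unitcard}, $|U(j,q)|=(-1)^j|GL(j,-q)|$ for all $j$, and since $(-1)^n/\bigl((-1)^r(-1)^{n-r}\bigr)=1$ the signs cancel, giving
\[
i_U(n,q)=\sum_{r=0}^n \frac{|GL(n,-q)|}{|GL(r,-q)|\,|GL(n-r,-q)|}=i_{GL}(n,-q),
\]
the last equality being the odd-characteristic case of Proposition~\ref{GLinv}. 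Thus $i_U(n,q)$ is the formal substitution $q\mapsto -q$ into the odd-characteristic formula for $i_{GL}(n,q)$. Since $n^2/2$ (for $n$ even) and $(n^2-1)/2$ (for $n$ odd) are even integers, $q^{n^2/2}=(-q)^{n^2/2}$ and $q^{(n^2-1)/2}=(-q)^{(n^2-1)/2}$, so $i_U(n,q)/q^{n^2/2}$ (resp.\ $i_U(n,q)/q^{(n^2-1)/2}$) is obtained by replacing $q$ by $-q$ in $i_{GL}(n,q)/q^{n^2/2}$ (resp.\ $i_{GL}(n,q)/q^{(n^2-1)/2}$).

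The second step is to substitute into Theorem~\ref{GLlimodd}. For $n$ even the $GL$ limit equals $\prod_{i\ge1}(1+q^{2-4i})^2(1-q^{-4i})/(1-q^{-i})$, and for $n$ odd it equals $2\prod_{i\ge1}(1+q^{-4i})(1-q^{-8i})/(1-q^{-i})$. All the exponents occurring in the numerators ($2-4i$, $-4i$, $-8i$) are even, so those factors are unchanged by $q\mapsto -q$, while $1-q^{-i}$ becomes $1-(-1/q)^i$. This yields $\prod_{k\ge1}(1+q^{2-4k})^2(1-q^{-4k})/(1-(-1/q)^k)$ for $n$ even, which is part~1, and $2\prod_{k\ge1}(1+q^{-4k})(1-q^{-8k})/(1-(-1/q)^k)$ for $n$ odd; factoring $1-q^{-8k}=(1-q^{-4k})(1+q^{-4k})$ rewrites the latter as $2\prod_{k\ge1}(1+q^{-4k})^2(1-q^{-4k})/(1-(-1/q)^k)$, which is part~2.

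The one genuinely delicate point — and the main obstacle — is justifying that $\lim_{n\to\infty}$ commutes with the substitution $q\mapsto -q$, since Theorem~\ref{GLlimodd} only asserts convergence at a fixed positive prime power. I would handle this by rerunning the argument of Theorem~\ref{GLlimodd} on the unitary side directly. Using $|GL(j,x)|=x^{j^2}(1/x;1/x)_j$ one rewrites $i_{GL}(n,x)/x^{n^2/2}$ (for $n$ even) as $(1/x;1/x)_n\sum_{r=0}^n x^{-(n-2r)^2/2}/\bigl((1/x;1/x)_r(1/x;1/x)_{n-r}\bigr)$, which for any $x$ with $|x|>1$ — in particular $x=-q$ — converges as $n\to\infty$, by dominated convergence against the convergent theta series $\sum_{j\in\mathbb{Z}}|x|^{-2j^2}$, to $\bigl(\sum_{j\in\mathbb{Z}}x^{-2j^2}\bigr)/(1/x;1/x)_\infty$; the odd case is identical with $x^{-(n-2r)^2/2}$ replaced by $x^{-((n-2r)^2-1)/2}$. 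Alternatively one may start from Theorem~\ref{sumprodGLodd} with $q$ replaced by $-q$ (valid as a formal identity in $u$), substitute $u\mapsto u\sqrt q$ exactly as in the proof of Theorem~\ref{GLlimodd} to get a function analytic in a disc of radius $>1$ apart from simple poles at $u=\pm1$, and apply Lemma~\ref{poles}. Evaluating the resulting theta sum at $x=-q$ via the Jacobi triple product (Proposition~\ref{sieveJTP}), and noting that $(1/(-q);1/(-q))_\infty=\prod_{j\ge1}(1-(-1/q)^j)$, then reproduces the two products above; this last bit is routine, so the limit–substitution interchange is the only step needing real care.
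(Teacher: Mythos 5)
Your proposal is correct and follows essentially the same route as the paper: both deduce the unitary limits by observing, via Propositions~\ref{GLinv} and \ref{Uinv} and the evenness of the exponents $n^2/2$ and $(n^2-1)/2$, that $i_U(n,q)/q^{\lfloor n^2/2\rfloor}$ is the $q\mapsto -q$ substitution of $i_{GL}(n,q)/q^{\lfloor n^2/2\rfloor}$, and then invoke Theorem~\ref{GLlimodd} together with the routine product manipulations (including $1-q^{-8k}=(1-q^{-4k})(1+q^{-4k})$). Your extra discussion justifying that the asymptotic limit remains valid at the negative base $-q$ (by rerunning the Darboux/theta-series argument) addresses a point the paper treats as immediate, and your verification of it is sound.
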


\begin{proof} For part 1, it follows from Propositions \ref{GLinv} and \ref{Uinv}
that for $n$ even and $q$ odd, $\frac{i_U(n,q)}{q^{n^2/2}}$ is obtained by replacing $q$ by $-q$
in $\frac{i_{GL}(n,q)}{q^{n^2/2}}$. The result now follows from part 1 of Theorem
\ref{GLlimodd}.

For part 2, it follows from Propositions \ref{GLinv} and \ref{Uinv} that
for $n$ odd and $q$ odd, $\frac{i_U(n,q)}{q^{(n^2-1)/2}}$ is obtained by replacing $q$ by $-q$
in $\frac{i_{GL}(n,q)}{q^{(n^2-1)/2}}$. The result now follows from part 2 of Theorem
\ref{GLlimodd}.
\end{proof}

\section{Symplectic groups} \label{Sp}

This section studies the asymptotics of the number of involutions
in the finite symplectic groups. We begin with the case of odd
characteristic. Lemma \ref{countSp} gives a formula for the number of involutions
(see also the proof of Theorem 3.1 of \cite{V}).

\begin{lemma} \label{countSp} Suppose that $q$ is odd. Then the number of involutions in $Sp(2n,q)$ is equal to
\[  \sum_{r=0}^n \frac{|Sp(2n,q)|}{|Sp(2r,q)| |Sp(2n-2r,q)|} .\]
\end{lemma}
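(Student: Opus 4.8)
The plan is to count involutions in $Sp(2n,q)$ for $q$ odd by exploiting the fact that an element $g$ with $g^2 = 1$ is diagonalizable over the field (since $x^2-1 = (x-1)(x+1)$ has distinct roots when $q$ is odd), so the underlying space $V = \mathbb{F}_q^{2n}$ decomposes as $V = V_+ \oplus V_-$, where $V_{\pm}$ is the $\pm 1$-eigenspace of $g$. First I would observe that because $g$ is an isometry of the symplectic form $\langle\,,\rangle$ and the eigenvalues $+1$ and $-1$ are distinct, the eigenspaces $V_+$ and $V_-$ are orthogonal to each other: for $v \in V_+$, $w \in V_-$ we have $\langle v, w\rangle = \langle gv, gw\rangle = \langle v, -w\rangle = -\langle v,w\rangle$, forcing $\langle v,w\rangle = 0$. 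Hence the restriction of the form to each $V_{\pm}$ is nondegenerate, so each $V_{\pm}$ is a symplectic space; in particular $\dim V_- = 2r$ is even, say, and $\dim V_+ = 2n-2r$.

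Next I would argue that the map sending an involution $g$ to the pair $(V_+, V_-)$ (equivalently, to $V_-$, an arbitrary nondegenerate symplectic subspace of dimension $2r$) is a bijection between involutions with $\dim V_- = 2r$ and such subspaces: given the decomposition, $g$ is recovered as $\mathrm{id}$ on $V_+$ and $-\mathrm{id}$ on $V_-$, and conversely any such $g$ is an involution lying in $Sp(2n,q)$. Therefore the number of involutions in $Sp(2n,q)$ equals $\sum_{r=0}^{n} N_r$, where $N_r$ is the number of nondegenerate symplectic subspaces of dimension $2r$ in a $2n$-dimensional symplectic space. The standard way to count $N_r$ is via the orbit–stabilizer theorem: $Sp(2n,q)$ acts transitively on nondegenerate symplectic subspaces of a given dimension (by Witt's theorem), and the stabilizer of such a subspace $W$ is $Sp(W) \times Sp(W^{\perp}) \cong Sp(2r,q) \times Sp(2n-2r,q)$, whence $N_r = |Sp(2n,q)| / (|Sp(2r,q)|\,|Sp(2n-2r,q)|)$. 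Summing over $r$ yields the claimed formula.

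The routine points to verify are that the $r=0$ term is well defined and equals $1$ (the identity involution, $V_- = 0$), which is automatic from $|Sp(0,q)| = 1$ in the convention \eqref{spcard}, and that transitivity on nondegenerate subspaces of fixed dimension indeed follows from Witt extension. The main obstacle — such as it is — is the verification that the eigenspace decomposition of an arbitrary symplectic involution is orthogonal and nondegenerate on each piece; everything downstream is a clean orbit-counting argument. This requires $q$ odd in an essential way (so that $-1 \neq 1$ and $g$ is semisimple), which is exactly the hypothesis, and the even-characteristic case is deferred to the later, subtler analysis via Theorem~\ref{Guralsum}.
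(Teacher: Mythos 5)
Your proof is correct and follows essentially the same route as the paper: decompose $V$ into the $\pm 1$-eigenspaces of $g$, note these are nondegenerate symplectic subspaces, and count via transitivity of $Sp(2n,q)$ on nondegenerate subspaces of fixed dimension with stabilizer $Sp(2r,q)\times Sp(2n-2r,q)$. The paper phrases this as ``conjugacy class size equals index of the centralizer'' rather than your ``orbit--stabilizer count of eigenspaces,'' but these are the same computation.
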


\begin{proof} Let $g \in Sp(2n,q)$  be an involution.   Let $V$ denote the natural $2n$-dimensional
module.   Then $V = V_{-1} \perp V_1$ where $V_a$ is the eigenspace of $g$ with eigenvalue $a$.
Note that each $V_1$ is a nondegenerate space of even dimension.
Set $2r = \dim V_1$.    Since any two nondegenerate spaces of the same dimension are in the same $Sp(2n,q)$
orbit, we see the conjugacy classes of involutions are determined by $r$.   Moreover,
the centralizer of $g$ is obviously isomorphic to $Sp(2r,q) \times Sp(2n-2r,q)$ and so the size of the
conjugacy class of $g$ is
 \[ \frac{|Sp(2n,q)|}{|Sp(2r,q)| |Sp(2n-2r,q)|} .
\]
Since $r$ can take on any value between $0$ and $n$, this proves the theorem.
\end{proof}

The following theorem is our main result.

\begin{theorem} \label{Spoddlim} Let $i_{Sp}(2n,q)$ be the number of involutions in $Sp(2n,q)$.
\begin{enumerate}

\item Let $n$ be even and $q$ be odd and fixed. Then
\begin{eqnarray*}
\lim_{n \rightarrow \infty} \frac{i_{Sp}(2n,q)}{q^{n^2}} & = &
\frac{1}{2}
\left[ \prod_{i \geq 1} (1+1/q^{2i-1})^2 + \prod_{i \geq 1} (1-1/q^{2i-1})^2 \right] \\
& = &
\prod_{i\ge 1} \frac{(1+q^{4-8i})^2(1-q^{-8i})}{(1-q^{-2i})}.
\end{eqnarray*}

\item Let $n$ be odd and $q$ be odd and fixed. Then
\begin{eqnarray*}
\lim_{n \rightarrow \infty} \frac{i_{Sp}(2n,q)}{q^{n^2-1}} & = &
\frac{q}{2} \left[ \prod_{i \geq 1}
(1+1/q^{2i-1})^2 - \prod_{i \geq 1} (1-1/q^{2i-1})^2 \right] \\
& = & 2 \prod_{i\ge 1} \frac{(1+q^{-8i})(1-q^{-16i})}{(1-q^{-2i})}.
\end{eqnarray*}

\end{enumerate}
\end{theorem}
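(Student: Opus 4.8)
The plan is to mirror the argument used for Theorem \ref{GLlimeven} and Theorem \ref{GLlimodd}, but now fed by the ``sum=product'' identity of Theorem \ref{prodSp}. First I would record the analogue of Proposition \ref{GLinv} for the symplectic group: by Lemma \ref{countSp}, $i_{Sp}(2n,q) = \sum_{r=0}^n |Sp(2n,q)| / (|Sp(2r,q)||Sp(2n-2r,q)|)$, so dividing through by $|Sp(2n,q)|$ turns the inner sum of Theorem \ref{prodSp} into $i_{Sp}(2n,q)/|Sp(2n,q)|$. Using \eqref{spcard}, $|Sp(2n,q)| = q^{2n^2+n}(1/q^2;1/q^2)_n$, so the coefficient of $u^n q^{n^2}$ in the left side of Theorem \ref{prodSp} is $i_{Sp}(2n,q)/(q^{n^2+n}(1/q^2;1/q^2)_n)$. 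Hence the coefficient of $u^n$ in $\sum_n u^n \, i_{Sp}(2n,q)/(q^{n^2+n}(1/q^2;1/q^2)_n)$ equals the coefficient of $u^n$ in the right side of Theorem \ref{prodSp}.

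Next I would perform the substitution that converts the $q^{n^2+n}$ weight into the target normalization $q^{n^2}$ (for $n$ even) or $q^{n^2-1}$ (for $n$ odd). Replacing $u$ by $uq$ in Theorem \ref{prodSp} introduces a factor $q^n$ into each term, so the left side becomes $\sum_n u^n q^{n^2+n} \sum_r 1/(|Sp(2r,q)||Sp(2n-2r,q)|) = \sum_n u^n \, i_{Sp}(2n,q)/((1/q^2;1/q^2)_n \, q^{n^2})$, while the right side becomes
\[
\frac{\prod_{i \geq 1}(1+uq/q^{2i})^2}{\prod_{i\geq 1}(1-u^2q^2/q^{2i})} = \frac{\prod_{i\geq 1}(1+u/q^{2i-1})^2}{(1-u^2)\prod_{i\geq 1}(1-u^2/q^{2i})},
\]
after splitting off the $i=1$ factor $1-u^2$ from the denominator. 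As $(1/q^2;1/q^2)_n \to \prod_{i\geq 1}(1-1/q^{2i})$ as $n\to\infty$, the limit of the coefficient of $u^n$ on the left is $\frac{1}{\prod_i(1-1/q^{2i})}\lim_n i_{Sp}(2n,q)/q^{n^2}$.

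Now the right side has only simple poles at $u = \pm 1$ on its circle of convergence (the other potential poles are strictly outside $|u|=1$), so Lemma \ref{poles} applies verbatim. Computing the residue-type contributions $g_j(w_j)$ at $w_1 = 1$ and $w_2 = -1$ and extracting the $u^n$ coefficient asymptotics gives, for $n$ even,
\[
\lim_{n\to\infty}\frac{i_{Sp}(2n,q)}{q^{n^2}} = \frac{1}{2}\left[\prod_{i\geq 1}(1+1/q^{2i-1})^2 + \prod_{i\geq 1}(1-1/q^{2i-1})^2\right],
\]
and for $n$ odd,
\[
\lim_{n\to\infty}\frac{i_{Sp}(2n,q)}{q^{n^2-1}} = \frac{q}{2}\left[\prod_{i\geq 1}(1+1/q^{2i-1})^2 - \prod_{i\geq 1}(1-1/q^{2i-1})^2\right],
\]
where the extra factor of $q$ in the odd case comes from dividing by $q^{n^2-1}$ rather than $q^{n^2}$ together with the $(1-u^2)^{-1}$ factor forcing the odd-index coefficients. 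The remaining task is to rewrite each of these sum/difference-of-two-products expressions as a single infinite product. Setting $Q = 1/q$, the even-case bracket is $\tfrac12\big((-Q;Q^2)_\infty^2 + (Q;Q^2)_\infty^2\big)$; I would express this via the Jacobi triple product bookkeeping of Proposition \ref{sieveJTP} (after matching $(-Q;Q^2)_\infty$ to a quotient $F(\cdot,\cdot)/(\cdot;\cdot)_\infty$), extract the even part, and read off $\prod_{i\geq 1}(1+q^{4-8i})^2(1-q^{-8i})/(1-q^{-2i})$; the odd case is handled identically using the odd part, yielding $2\prod_{i\geq 1}(1+q^{-8i})(1-q^{-16i})/(1-q^{-2i})$. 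The main obstacle is this last combinatorial repackaging step: getting the exponents in the base-$8$ product exactly right requires care with the squared theta-like factor $(-Q;Q^2)_\infty^2$, since Proposition \ref{sieveJTP} as stated handles a single $F(X,R)$, so I would first need to identify $(-Q;Q^2)_\infty^2$ with (a constant multiple of) some $F(X,R)$ via $(-Q;Q^2)_\infty(-Q;Q^2)_\infty$ and the substitution in the $GL$-odd proof, then apply the even/odd splitting. Everything else is a routine transcription of the $GL$ arguments with $q$ replaced by $q^2$ throughout.
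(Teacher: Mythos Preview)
Your proposal is correct and follows essentially the same route as the paper: substitute $u\mapsto uq$ in Theorem~\ref{prodSp}, read off the left-hand coefficient via Lemma~\ref{countSp} and \eqref{spcard}, apply Lemma~\ref{poles} at the poles $u=\pm 1$, and then convert the sum/difference of products into a single product via Proposition~\ref{sieveJTP}. Your one concern about the squared factor is not an obstacle: with $X=Q$ and $R=Q^2$ one has $-R/X=-Q=-X$, so $F(Q,Q^2)=(Q^2;Q^2)_\infty(-Q;Q^2)_\infty^2$ already produces the square, and Proposition~\ref{sieveJTP} applies directly to give $F(Q^4,Q^8)$ (even case) and $QF(Q^8,Q^8)$ (odd case).
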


\begin{proof} Replacing $u$ by $uq$ in Theorem \ref{prodSp} gives
\begin{equation} \label{big3}
\sum_{n \geq 0} u^n q^{n^2+n} \sum_{r=0}^n
\frac{1}{|Sp(2r,q)| |Sp(2n-2r,q)|} = \frac{\prod_i (1+u/q^{2i-1})^2}{(1-u^2) \prod_i (1-u^2/q^{2i})}.
\end{equation}

Consider the $n \rightarrow \infty$ limit of the coefficient
of $u^n$ in the left hand side of \eqref{big3}. This is equal to
\begin{eqnarray*}
& & \lim_{n \rightarrow \infty}
\frac{1}{q^{n^2} (1-1/q^2) \cdots (1-1/q^{2n})} \sum_{r=0}^n
\frac{|Sp(2n,q)|}{|Sp(2r,q)| |Sp(2n-2r,q)|} \\
& = & \lim_{n \rightarrow \infty} \frac{1}{q^{n^2} (1-1/q^2) \cdots
(1-1/q^{2n})} i_{Sp}(2n,q) \\
& = & \frac{1}{\prod_i (1-1/q^{2i})} \lim_{n \rightarrow \infty}
\frac{i_{Sp}(2n,q)}{q^{n^2}}.
\end{eqnarray*} The first equality was Lemma \ref{countSp}.

Now consider the right hand side of \eqref{big3}. Except for poles at $u = \pm 1$, it is analytic in a disc of radius
greater than 1. Hence Lemma \ref{poles} gives that for $n$ even, the $n \rightarrow \infty$ limit of the coefficient of
$u^n$ in the right hand side of \eqref{big3} is
\[
\frac{1}{2} \left[ \frac{\prod_i (1+1/q^{2i-1})^2}{\prod_i (1-1/q^{2i})} +
\frac{\prod_i (1-1/q^{2i-1})^2}{\prod_i (1-1/q^{2i})} \right] .
\]
It follows that for $n$ even,
\[ \lim_{n \rightarrow \infty} \frac{i_{Sp}(2n,q)}{q^{n^2}} =
\frac{1}{2}
\left[ \prod_{i \geq 1} (1+1/q^{2i-1})^2 + \prod_{i \geq 1} (1-1/q^{2i-1})^2 \right].\] Letting $Q=1/q$ and using
Proposition \ref{sieveJTP} gives that this is equal to
\[ \frac{1}{2}\frac{1}{(Q^2;Q^2)_\infty}
\left( F(Q,Q^2) +F(-Q,Q^2)\right)\\
= \frac{1}{(Q^2;Q^2)_\infty} F(Q^4,Q^8).
\]

Similarly, if $n$ is odd, Lemma \ref{poles} gives that the $n \rightarrow \infty$ limit of the coefficient of $u^n$
in the right hand side of \eqref{big3} is
\[ \frac{1}{2} \left[ \frac{\prod_i (1+1/q^{2i-1})^2}{\prod_i (1-1/q^{2i})} -
\frac{\prod_i (1-1/q^{2i-1})^2}{\prod_i (1-1/q^{2i})} \right] .\] It follows that for $n$ odd,
\[ \lim_{n \rightarrow \infty} \frac{i_{Sp}(2n,q)}{q^{n^2}} = \frac{1}{2} \left[ \prod_{i \geq 1}
(1+1/q^{2i-1})^2 - \prod_{i \geq 1} (1-1/q^{2i-1})^2 \right]. \] Letting $Q=1/q$ and using Proposition
\ref{sieveJTP} gives that this is equal to
\[ \frac{1}{2}\frac{1}{(Q^2;Q^2)_\infty}
\left( F(Q,Q^2) - F(-Q,Q^2)\right) = \frac{1}{(Q^2;Q^2)_\infty} QF(Q^8,Q^8).\]

This proves the theorem. \end{proof}

{\it Remark:} The expression in part $1$ of Theorem \ref{Spoddlim} is equal to $1.1689..$ when $q=3$, and tends
to $1$ as $q \rightarrow \infty$. The expression in part $2$ of Theorem \ref{Spoddlim} is equal to $2.2819..$
when $q=3$, and tends to $2$ as $q \rightarrow \infty$.

Next we consider the case that $q$ is even.

To begin we use elementary means to derive a formula for the number of involutions in $Sp(2n,q)$ when $q$ is even.
For related results, see \cite{AS}, \cite{D1}, \cite{D2}, \cite{LS}.

\begin{theorem} \label{countSpeven} When $q$ is even, the number of involutions in $Sp(2n,q)$ is equal to
\[ \sum_{r=0 \atop r \ even}^n \frac{|Sp(2n,q)|}{A_r}
+\sum_{r=1 \atop r \ even}^n \frac{|Sp(2n,q)|}{B_r}+
\sum_{r=1 \atop r \ odd}^n \frac{|Sp(2n,q)|}{C_r} \]
where
$$
\begin{aligned}
A_r= &q^{r(r+1)/2+r(2n-2r)}|Sp(r,q)|\ |Sp(2n-2r,q)|,\\
B_r=& q^{r(r+1)/2+r(2n-2r)}q^{r-1}|Sp(r-2,q)|\ |Sp(2n-2r,q)|,\\
C_r=& q^{r(r+1)/2+r(2n-2r)}|Sp(r-1,q)|\ |Sp(2n-2r,q)|.
\end{aligned}
$$ \end{theorem}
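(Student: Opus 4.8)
The plan is to enumerate the elements $g \in Sp(2n,q)$ with $g^2 = 1$ by sorting them into conjugacy classes, attaching to each class a combinatorial datum, and computing the corresponding centralizer order; the three families of data will produce the three sums with denominators $A_r$, $B_r$, $C_r$ (the $r=0$ term of the first sum being the identity).

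First I would translate the problem into linear algebra over $\mathbb{F}_q$ with $q$ even. Such a $g$ has the form $g = 1 + N$ with $N^2 = 0$; expanding $\langle gv, gw\rangle = \langle v, w\rangle$ and using $N^2 = 0$ shows that $g \in Sp(2n,q)$ if and only if $N$ is self-adjoint for the symplectic form, i.e. $\langle Nv, w\rangle = \langle v, Nw\rangle$ for all $v,w$. Put $W = \mathrm{im}(N)$ and $r = \dim W$. Self-adjointness gives $\langle Nv, Nw\rangle = \langle v, N^2 w\rangle = 0$, so $W$ is totally isotropic, $\ker N = W^{\perp} \supseteq W$, and hence $r \le n$. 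The operator $N$ determines a nondegenerate symmetric bilinear form $\gamma$ on $W$ via $\gamma(Nv, Nw) = \langle v, Nw\rangle$ (well defined and symmetric precisely because $W$ is totally isotropic), together with a nondegenerate symplectic form on $W^{\perp}/W \cong \mathbb{F}_q^{2n-2r}$.

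Second I would show that the $Sp(2n,q)$-orbit of $N$ is determined by $r$ together with the isometry type of $\gamma$, and that conversely every admissible datum is realized (choose a totally isotropic $r$-space $W$, a complementary isotropic space, and prescribe $\gamma$; transitivity then follows from Witt-type extension arguments). Over $\mathbb{F}_q$ with $q$ even a nondegenerate symmetric bilinear form is unique (and non-alternating) in odd dimension, and in even dimension $\ge 2$ is either alternating or of a single non-alternating type. This yields exactly three families of classes: $r$ even with $\gamma$ alternating ($0 \le r \le n$); $r$ even $\ge 2$ with $\gamma$ non-alternating; and $r$ odd with $\gamma$ (necessarily) non-alternating.

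Third, and this is the main obstacle, I would compute $|C_{Sp(2n,q)}(N)|$ in each case. The centralizer stabilizes the flag $W \subseteq W^{\perp} \subseteq V$; its unipotent radical is built from the symmetric bilinear forms on $W$ and the homomorphisms $W^{\perp}/W \to W$ (the symplectic condition forcing the ``lower'' Hom-pieces to be adjoint to these), of total order $q^{r(r+1)/2 + r(2n-2r)}$, while the reductive-type complement has order $|\mathrm{Isom}(\gamma)| \cdot |Sp(W^{\perp}/W)|$ with $Sp(W^{\perp}/W) \cong Sp(2n-2r,q)$. The delicate point is $|\mathrm{Isom}(\gamma)|$ in even characteristic: it equals $|Sp(r,q)|$ when $\gamma$ is alternating; when $\gamma$ is non-alternating one analyzes the canonical subspace $V_0 = \{v \in W : \gamma(v,v) = 0\}$ and the radical of $\gamma|_{V_0}$, obtaining $|\mathrm{Isom}(\gamma)| = q^{r-1}|Sp(r-2,q)|$ for $r$ even and $|\mathrm{Isom}(\gamma)| = |Sp(r-1,q)|$ for $r$ odd (the isometry group of a non-alternating symmetric form is not a classical group but a symplectic group of smaller rank, extended by a unipotent piece in the even-dimensional case). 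Assembling these pieces gives $|C_{Sp(2n,q)}(N)| = A_r$, $B_r$, $C_r$ in the three respective cases, so the class sizes are $|Sp(2n,q)|/A_r$, $|Sp(2n,q)|/B_r$, $|Sp(2n,q)|/C_r$; summing over the three families of classes completes the proof.
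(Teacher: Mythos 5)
Your proposal is correct and follows essentially the same route as the paper: classify involutions via the flag $0<(g-1)V<V^g<V$ and the rank and type (alternating vs.\ non-alternating) of the induced symmetric form in characteristic $2$, then compute the centralizer as the unipotent radical of order $q^{r(r+1)/2+r(2n-2r)}$ extended by $\mathrm{Isom}(\gamma)\times Sp(2n-2r,q)$, with $|\mathrm{Isom}(\gamma)|$ equal to $|Sp(r,q)|$, $q^{r-1}|Sp(r-2,q)|$, or $|Sp(r-1,q)|$ exactly as in the paper's analysis of the isotropic hyperplane and its radical. The only difference is cosmetic (you phrase it via the form $\gamma$ on $W=(g-1)V$ rather than the matrix $h$ acted on by congruence), so nothing further is needed.
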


The following lemma will be helpful for proving Theorem \ref{countSpeven}.

  \begin{lemma} \label{Spinv} Suppose $q$ is even, and let $g \in Sp(2n,q)$ be an involution.
  Let $V$ be the natural module of dimension $2n$ for $Sp(2n,q)$. Then
  \begin{enumerate}
  \item  $W=(g-1)V$ is totally singular of dimension $r$, where $r \le n$ is the rank of $g-1$.
  \item   $W^{\perp} = V^g$ is the fixed space of $g$ on $V$.
  \end{enumerate}
  \end{lemma}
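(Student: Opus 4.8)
The plan is to exploit the two features special to characteristic $2$: an involution $g$ satisfies $(g-1)^2 = g^2 - 2g + 1 = 0$, and $g$ preserves the nondegenerate alternating form $\langle\cdot,\cdot\rangle$ on $V$, with $g = g^{-1}$ so that $g$ is its own adjoint. Write $h = g-1$, so that $W = hV$; the first identity already gives $W \subseteq \ker h = V^g$.

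First I would show $W$ is totally singular. For $v,w \in V$,
\[
\langle hv, hw\rangle = \langle gv,gw\rangle - \langle gv,w\rangle - \langle v,gw\rangle + \langle v,w\rangle ,
\]
and using $\langle gv,gw\rangle = \langle v,w\rangle$ together with $\langle v,gw\rangle = \langle g^{-1}v,w\rangle = \langle gv,w\rangle$, the right side equals $2\langle v,w\rangle - 2\langle gv,w\rangle = 0$. Hence $\langle W,W\rangle = 0$, i.e. $W \subseteq W^\perp$, and therefore $\dim W \le \dim W^\perp = 2n - \dim W$, forcing $r = \dim W = \dim hV \le n$; this is part (1).

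For part (2) I would compute $W^\perp$ directly: $x \in W^\perp$ iff $\langle x, hv\rangle = 0$ for all $v$, i.e. $\langle x, gv\rangle = \langle x,v\rangle$ for all $v$. Since $\langle x, gv\rangle = \langle gx, v\rangle$, this says $\langle gx - x, v\rangle = 0$ for all $v \in V$, and nondegeneracy gives $gx = x$. Thus $W^\perp = V^g$, as claimed. As a consistency check, $\dim W^\perp = 2n - r = \dim\ker h = \dim V^g$, and the inclusion $W \subseteq V^g = W^\perp$ matches $W$ being totally singular.

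I do not anticipate a genuine obstacle; the only points requiring care are the characteristic-$2$ cancellations (in particular $(g-1)^2 = 0$) and recalling that $g$ preserving the form means $g$ coincides with its own transpose-inverse, so that $g$ may be moved freely across $\langle\cdot,\cdot\rangle$. This lemma then feeds into the proof of Theorem \ref{countSpeven}: it identifies the invariants $W$ and $V^g$ attached to an involution, after which one parametrizes the conjugacy classes by $r = \operatorname{rank}(g-1)$ and computes the corresponding centralizer orders.
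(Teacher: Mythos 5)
Your argument is correct, but it takes a genuinely different route from the paper. You prove both parts by direct computation: in characteristic $2$ one has $(g-1)^2=0$, so $W=(g-1)V\subseteq\ker(g-1)=V^g$; since $g$ preserves the alternating form and $g=g^{-1}$, $g$ is self-adjoint, so $\langle (g-1)v,(g-1)w\rangle=2\langle v,w\rangle-2\langle gv,w\rangle=0$, giving that $W$ is totally isotropic (hence $r\le n$), and the same adjointness identity $\langle x,(g-1)v\rangle=\langle (g-1)x,v\rangle$ together with nondegeneracy identifies $W^\perp$ with $\ker(g-1)=V^g$. The paper instead argues by induction on the dimension: if the fixed space $V^g$ contains a nontrivial nondegenerate subspace $U$, then $V=U\perp U^{\perp}$ is a $g$-stable orthogonal decomposition and one inducts; otherwise $V^g$ is totally isotropic, and since $\dim V^g\ge n$ (again because $W\le V^g$) it is a maximal isotropic subspace of dimension $n$, forcing $r=n$ and $V^g=W^\perp$ by a dimension count. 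Your computational route is self-contained and uniform --- no case split, no induction, and both conclusions fall out of self-adjointness plus nondegeneracy --- whereas the paper's reduction highlights the geometric mechanism (splitting off nondegenerate pieces of the fixed space) that it reuses in its other conjugacy-class arguments. Either proof fully supports the way Lemma \ref{Spinv} is used in the proof of Theorem \ref{countSpeven}.
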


  \begin{proof}   If $U$ is a nontrivial nondegenerate subspace of $V^g$, then
  clearly,  $V= U \oplus U^{\perp}$ is a $g$-stable decomposition whence
  the result follows by induction.

  So we may assume that $V^g$ is totally singular.  Since $\dim V^g \ge n$,
  it follows that $V^g$ is $n$-dimensional and is a maximal isotropic space.
  Since $g^2=1$,  $W \le V^g$ and since $\dim W + \dim V^g=2n$, we see
  in this case that $r=n$ and  $V^g=W^{\perp}$ and the result holds.
  \end{proof}

Now we prove Theorem \ref{countSpeven}.

\begin{proof} (Of Theorem \ref{countSpeven}) Let $G=Sp(2n,q)$, and let $g \in G$
be an involution. With respect to the flag $0 < (g-1)V <  V^g < V$,
and an appropriate basis, $g$ can be written as:

  $$
  \begin{pmatrix}     I   &  0  &  h  \\
                                  0 &  I    & 0    \\
                                  0 & 0 &   I
                                  \end{pmatrix}
                                  $$ where $h$ is a symmetric matrix of rank $r$.
  Let $P$ be the parabolic subgroup stabilizing the flag. Then $P$
  acts on the set of all such elements
  with any symmetric element in the upper corner by congruence
  (via $GL(r,q))$.

  We use the fact that two symmetric $m \times m$ matrices over
  a perfect field of characteristic $2$ are congruent if and only if they
  have the same rank and are both skew or are both nonskew (in particular
  if the rank is odd, then they are all congruent).

  Thus, we see that the conjugacy classes of involutions in $G$ are
  determined by the rank $r$ and if $r > 0$ is even on whether $h$ is skew
  or not. Clearly, two involutions written as above are conjugate in $G$
  if and only if they are conjugate in $P$ if and only if the right
  hand corner terms are congruent. In particular, there are $n+ 1 + n/2$ classes if $n$ is even
  and $n+1 + (n-1)/2$ classes if $n$ is odd.

  Now we can write down the centralizers of such elements.  Clearly
  any element of $C_G(g)$ preserves the flag and so $C_G(g)$ is contained
  in $P$.

  Note that $P=LQ$ where $Q$ is the unipotent radical with
  $$|Q|= q^{r(r+1)/2 + r(2n-2r)}.$$

  Clearly  $Q$ centralizes $g$.   Note that $L = GL(r,q) \times Sp(2n-2r, q)$
  and that $Sp(2n-2r,q)$ also centralizes $g$.  So we just need to compute
  the centralizer of $g$ in $K:=GL(r,q)$.   Now $A \in K$ acts on $g$ by sending
 $h$ to $AhA^{\top}$.   Thus,  $C_K(g) \cong Sp(r,q)$ if $r$ is even and
 $h$ is an alternating form.

 If  $h$ is not alternating, by the remarks above, we can take $h = I$ and so
 $C_K(g) = \{ A \in K | AA^{\top}=I\}$.  Of course, if $q$ were odd, this would
 just be an orthogonal group.  However, with $q$ even, this is not the case.
 We now compute the order of $C_K(g)$.

Let $U$ be an $r$-dimensional space with a nondegenerate symmetric
bilinear pairing $(,)$ on $U$  that is not alternating (so $(u,u)=1$
for some $u \in U$).   Let $U_1$ be the set of all $u$ with $(u,u)=0$.
Since we are in  a field of characteristic $2$, this is a hyperplane in $U$.
Note that the form restricted to $U_1$ is alternating.   Set $U_0 = U_1^{\perp}$.
Note that $U_0$ is $1$-dimensional (because our form is nondegenerate).

If $U_0 \cap U_1=0$, then $r$ is odd and $L$ preserves this decomposition whence
$C_K(g) \cong Sp(r-1,q)$  (the action is determined precisely by the action on $U_1$ since
it must be trivial on $U_0$ and preserve the splitting).

If $U_0 < U_1$, then $U_1/U_0$ is a nonsingular space of dimension
$r-2$ with the corresponding
alternating form and so $r$ is even.

In  this case any element of $C_K(g)$ is trivial on $U_0$.    Let $v \in U$
be a vector with $v$ outside of $U_0$ with $(v,v)=1$.   Then there exists
$x \in C_K(g)$ with $xv = v + u$ for any $u \in U_1$.   Moreover,  $C_K(g)$ induces
$Sp(r-2,q)$ on $U_0/U_1$ and it is an easy exercise to see that

$$
|C_K(g)| = |Sp(r-2,q)| q^{r-1}.
$$
 Summarizing, we can write down the orders of centralizers for each class
 representative:

 For each  even $r$ with $0 \le r \le n$, we have a class $C(r)$ with the centralizer having
 order
 $$A_r =
 q^{r(r+1)/2 + r(2n-2r)}|Sp(r,q)||Sp(2n-2r,q)|.
 $$

 For each $r$ with $1 \le r \le n$, we have a class $D(r)$ with centralizer having order
 for $r$ even:

 $$B_r=
 q^{r(r+1)/2 + r(2n-2r)} q^{r-1}|Sp(r-2,q)||Sp(2n-2r,q)|.
 $$

 and for $r$ odd:
 $$
 C_r=q^{r(r+1)/2 + r(2n-2r)} |Sp(r-1,q)||Sp(2n-2r,q)|.
 $$

Adding these contributions completes the proof.
\end{proof}

This leads to the following result.

\begin{theorem} \label{Spevenlim} Let $i_{Sp}(2n,q)$ be the number of involutions in $Sp(2n,q)$. Let $q$ be even and fixed. Then
\[ \lim_{n \rightarrow \infty} \frac{i_{Sp}(2n,q)}{q^{n^2+n}} = \prod_{i \geq 1} (1+1/q^{2i}) .\]
\end{theorem}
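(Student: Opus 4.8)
The plan is to combine the exact count of involutions in Theorem \ref{countSpeven} with the ``sum=product'' identity of Theorem \ref{Guralsum}, and then extract the asymptotics using the Darboux lemma (Lemma \ref{poles}), exactly as in the earlier arguments for $GL$ and for $Sp$ in odd characteristic. First I would note that Theorem \ref{countSpeven} expresses $i_{Sp}(2n,q)$ as $|Sp(2n,q)|$ times precisely the parenthesized inner sum appearing on the left-hand side of Theorem \ref{Guralsum}; that is, that inner sum equals $i_{Sp}(2n,q)/|Sp(2n,q)|$. Using \eqref{spcard} to write $|Sp(2n,q)| = q^{2n^2+n}(1/q^2;1/q^2)_n$, the coefficient of $u^n$ on the left-hand side of Theorem \ref{Guralsum} is $q^{n^2}\, i_{Sp}(2n,q)/|Sp(2n,q)| = i_{Sp}(2n,q)\big/\big(q^{n^2+n}(1/q^2;1/q^2)_n\big)$. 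Equating this with the coefficient of $u^n$ on the right-hand side of Theorem \ref{Guralsum} gives
\[
\frac{i_{Sp}(2n,q)}{q^{n^2+n}} = (1/q^2;1/q^2)_n \cdot [u^n]\left( \frac{1}{1-u}\, \frac{\prod_{i \geq 1}(1+u/q^{2i})}{\prod_{i \geq 1}(1-u^2/q^{2i})} \right).
\]

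Next I would analyze $f(u) = \dfrac{1}{1-u}\, \dfrac{\prod_{i \geq 1}(1+u/q^{2i})}{\prod_{i \geq 1}(1-u^2/q^{2i})}$ as a function of a complex variable. Since $q > 1$, both infinite products converge locally uniformly to entire functions; the denominator product vanishes only at $u = \pm q^i$ for $i \geq 1$, all of modulus $\geq q > 1$, and the numerator product is nonzero for $|u| \leq 1$. Hence $f$ is analytic in $|u| < 1$, and on $|u| = 1$ its only singularity is the simple pole at $u = 1$ from the factor $1/(1-u)$ (in particular there is no pole at $u = -1$, where the denominator product is nonzero). Writing $f(u) = g(u)/(1-u)$ with $g(u) = \prod_{i \geq 1}(1+u/q^{2i})\big/\prod_{i \geq 1}(1-u^2/q^{2i})$ analytic near $u = 1$, Lemma \ref{poles} (with $r = 1$) yields
\[
[u^n]\, f(u) = g(1) + o(1) = \frac{\prod_{i \geq 1}(1+1/q^{2i})}{\prod_{i \geq 1}(1-1/q^{2i})} + o(1).
\]

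Finally, since $(1/q^2;1/q^2)_n \to (1/q^2;1/q^2)_\infty = \prod_{i \geq 1}(1-1/q^{2i})$ as $n \to \infty$, I would multiply the two limits to obtain
\[
\lim_{n \rightarrow \infty} \frac{i_{Sp}(2n,q)}{q^{n^2+n}} = \prod_{i \geq 1}(1-1/q^{2i}) \cdot \frac{\prod_{i \geq 1}(1+1/q^{2i})}{\prod_{i \geq 1}(1-1/q^{2i})} = \prod_{i \geq 1}(1+1/q^{2i}),
\]
which is the claimed formula. The only place that requires genuine care -- and it is mild, given that Theorems \ref{countSpeven} and \ref{Guralsum} are already in hand -- is the bookkeeping matching the inner sum of Theorem \ref{countSpeven} term-by-term with that of Theorem \ref{Guralsum} and the careful verification of the pole structure of $f$ on $|u| = 1$; once these are settled, the Darboux extraction is entirely routine.
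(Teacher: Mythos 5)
Your proposal is correct and follows the paper's own argument essentially verbatim: combine Theorem \ref{countSpeven} with Theorem \ref{Guralsum} to get the generating function identity for $i_{Sp}(2n,q)/|Sp(2n,q)|$, then apply Lemma \ref{poles} to the single simple pole at $u=1$ and divide out the $(1/q^2;1/q^2)_n \to (1/q^2;1/q^2)_\infty$ factor. Your extra care about the absence of a pole at $u=-1$ is a sound (if brief) elaboration of the paper's remark that the right-hand side is analytic in a disc of radius greater than $1$ apart from the pole at $u=1$.
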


\begin{proof} Combining Theorem \ref{countSpeven} and Theorem \ref{Guralsum} gives that
\begin{equation} \label{spev}
 \sum_{n \geq 0} u^n q^{n^2} \frac{i_{Sp}(2n,q)}{|Sp(2n,q)|} = \frac{1}{1-u} \frac{\prod_{i \geq 1} (1+u/q^{2i})}
{\prod_{i \geq 1} (1-u^2/q^{2i})}. \end{equation}

Consider the $n \rightarrow \infty$ limit of the coefficient of $u^n$ in the left hand side of \eqref{spev}.
This is equal to
\begin{eqnarray*}
& & \lim_{n \rightarrow \infty} \frac{i_{Sp}(2n,q)}{q^{n^2+n} \prod_{i=1}^n (1-1/q^{2i})} \\
& = & \frac{1}{\prod_{i \geq 1} (1-1/q^{2i})} \lim_{n \rightarrow \infty} \frac{i_{Sp}(2n,q)}{q^{n^2+n}}.
\end{eqnarray*}

Now consider the right hand side of \eqref{spev}. Except for a pole at $u=1$, it is analytic in a disc
of radius greater than $1$. Hence Lemma \ref{poles} gives that the $n \rightarrow \infty$ of the
coefficient of $u^n$ in the right hand side of \eqref{spev} is
\[ \frac{\prod_{i \geq 1} (1+1/q^{2i})}{\prod_{i \geq 1} (1-1/q^{2i})}. \]

The theorem follows.
\end{proof}

{\it Remark:} The expression in Theorem \ref{Spevenlim} is equal to $1.3559..$ when $q=2$, and tends
to $1$ as $q \rightarrow \infty$.

\section{Orthogonal groups} \label{O}

This section studies the asymptotics of the number of involutions in orthogonal groups over a finite field
of size $q$. Recall
that the orthogonal group is the stabilizer of a nondegenerate quadratic form on a $d$-dimensional
space.
If $d=2n$ is even,
there are two types of nondegenerate quadratic forms and we denote their stabilizers by
$O^{\pm}(2n,q)$.   Note that $O^+(2n,q)$ is the stabilizer of a nondegenerate form that contains
totally singular subspaces of dimension $n$  (in the other case, the totally singular subspaces
have dimension at most $n-1$).  If $d$ is  odd, we only consider $q$ odd (as the orthogonal group
in characteristic $2$ in odd dimension is isomorphic to the symplectic group).

We assume
first that the characteristic is odd.

Lemma \ref{countO} gives an exact expression for the number of involutions (see also
\cite{V}). We note that the two terms in part 2 of Lemma \ref{countO} are equal.

\begin{lemma} \label{countO} Suppose that the characteristic is odd.
\begin{enumerate}
\item The number of involutions in $O^+(2n,q)$ is equal to
\[ \sum_{r=0}^{2n} \frac{|O^+(2n,q)|}{|O^+(r,q)||O^+(2n-r,q)|} + \sum_{r=1}^{2n-1} \frac{|O^+(2n,q)|}{|O^-(r,q)||O^-(2n-r,q)|} .\]

\item The number of involutions in $O^-(2n,q)$ is equal to
\[ \sum_{r=0}^{2n-1} \frac{|O^-(2n,q)|}{|O^+(r,q)||O^-(2n-r,q)|} + \sum_{r=1}^{2n} \frac{|O^-(2n,q)|}{|O^-(r,q)||O^+(2n-r,q)|} .\]

\item The number of involutions in $O^+(2n+1,q)$ (or in $O^-(2n+1,q)$) is equal to
\[ \sum_{r=0}^{2n+1} \frac{|O^+(2n+1,q)|}{|O^+(r,q)||O^+(2n+1-r,q)|} + \sum_{r=1}^{2n} \frac{|O^+(2n+1,q)|}{|O^-(r,q)||O^-(2n+1-r,q)|} .\]

\end{enumerate}
\end{lemma}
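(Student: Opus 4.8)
The plan is to imitate the proof of Lemma~\ref{countSp}. Let $V$ be the natural module carrying a nondegenerate quadratic form, let $g$ be an involution (we allow $g = \pm 1$, as the $r=0$ term in Lemma~\ref{countSp} does), and write $V = V_1 \perp V_{-1}$ with $V_{\pm 1}$ the $\pm 1$-eigenspace of $g$. Since the characteristic is odd this decomposition makes sense and is orthogonal, and each $V_{\pm 1}$ is nondegenerate. By Witt's extension theorem, two involutions $g,g'$ are conjugate in $O(V)$ if and only if $V_1(g)\cong V_1(g')$ as quadratic spaces (the orthogonal complements are then automatically isometric, and the isometry extends to $V$); moreover $C_{O(V)}(g) = O(V_1)\times O(V_{-1})$, so the class of $g$ has size $|O(V)|/\bigl(|O(V_1)|\,|O(V_{-1})|\bigr)$. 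Thus the count of involutions is a sum of such quotients, one per admissible isometry class of $V_1$.

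The next step is the compatibility analysis. Writing $r = \dim V_1$, the allowed pairs $(V_1,V_{-1})$ of isometry classes are exactly those with $V_1 \perp V_{-1} \cong V$. When $r$ is even, in even dimension the Witt type is multiplicative, so for $V$ of type $\epsilon$ the admissible type pairs $(\eta_1,\eta_2)$ are those with $\eta_1\eta_2 = \epsilon$; the boundary values $r=0$ and $r=2n$ force the empty factor to have type $+$, which is recorded by the conventions $|O^+(0,q)|=1$ and $1/|O^-(0,q)|=0$. When $r$ is odd, $V_1$ has odd dimension and there are two isometry classes (distinguished by discriminant), both realizable inside $V$ for $1\le r\le 2n-1$, and they have the \emph{same} centralizer order because $|O^+(r,q)| = |O^-(r,q)|$ in odd dimension by \eqref{Ooddcard}. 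For $V$ of odd dimension $2n+1$ the picture is the mirror image: exactly one of $r$, $2n+1-r$ is odd, the odd factor again contributes a factor $2$ to the class count for each interior $r$, and the even factor may be of either type. In Part~2 the substitution $r\mapsto 2n-r$ interchanges the two mixed type pairs, which is why the two displayed terms are equal; and in Part~3 the fact that $|O^+(2n+1,q)| = |O^-(2n+1,q)|$ explains why a single expression works for both $O^+(2n+1,q)$ and $O^-(2n+1,q)$.

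The final step is to reassemble these contributions into the stated formulas. The key bookkeeping point is that, because $|O^+(m,q)| = |O^-(m,q)|$ for $m$ odd, the doubling at odd $r$ is produced automatically by having two sums in each part of the lemma: one term comes from the $O^+$-sum and one from the $O^-$-sum, and each equals $1/\bigl(|O(r,q)|\,|O(2n-r,q)|\bigr)$ (or the odd-dimensional analogue). For even $r$, the two sums instead pick out the two admissible type pairs (both-$+$ and both-$-$ in Part~1, the two mixed pairs in Part~2, and the two types of the even-dimensional eigenspace in Part~3), while the conventions at dimension $0$ together with the precise ranges of summation ($r$ from $0$ to $2n$ versus $1$ to $2n-1$, and so on) excise exactly the nonexistent boundary terms.

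I expect the main obstacle to be precisely this last bookkeeping: checking that the compact formulas, with fixed $\pm$ superscripts and specified summation ranges, faithfully reproduce the genuine conjugacy-class enumeration — in particular that the two isometry classes of an odd-dimensional eigenspace are each counted once and only once, and that the degenerate endpoints ($r=0$, $r=2n$, $r=2n+1$) are handled by the conventions $|O^+(0,q)|=1$ and $1/|O^-(0,q)|=0$ rather than by separate corrections. The underlying group theory (nondegeneracy of eigenspaces, Witt's theorem, the centralizer description) is routine in odd characteristic; the content is organizing the type combinatorics to land on the exact shape of the right-hand sides.
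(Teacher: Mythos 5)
Your proposal is correct and takes essentially the same route as the paper's proof: the orthogonal eigenspace decomposition $V = V_1 \perp V_{-1}$ with nondegenerate summands, Witt's theorem to identify conjugacy classes of involutions with isometry classes of the fixed space, and the centralizer $O(V_1)\times O(V_{-1})$ giving the class sizes. You merely spell out the type/discriminant bookkeeping (multiplicativity of type in even dimension, the coincidence $|O^+(m,q)|=|O^-(m,q)|$ for odd $m$, and the dimension-$0$ conventions) more explicitly than the paper, which states these points briefly.
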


\begin{proof} Let $g \in O^{\pm}(n,q)$ be
an involution and let $V$ denote the natural orthogonal module.
Then $V = V_1 \perp V_{-1}$ where $V_1$ is the fixed space of
$g$ and $V_{-1}$ is the $-1$ eigenspace of $g$.  In particular, these
eigenspaces are nondegenerate.    Let $r = \dim V_1$.

The conjugacy class of $g$ is determined by the orbit of $V_1$ under
the orthogonal group.   Given a fixed $r$ with $0 < r < n$, there will be two orbits
depending upon the type of $V_1$  (note that the type of $V$ and $V_1$ determines
the type of $V_{-1}$).  If $r=0$ or $n$, then $g$ is a scalar.

Clearly the centralizer of $g$ is $O(V_1) \times O(V_{-1})$ and so
the size of the conjugacy class of $g$ is
$$
\frac{|O^{\pm}(n,q)|}{|O(V_1)||O(V_{-1})|},
$$
whence the result.

\end{proof}

This leads to the following result.

\begin{theorem} \label{mainO1}
Let $i_O^{\pm}(2n,q)$ denote the number of involutions in $O^{\pm}(2n,q)$. Let $q$
be odd and fixed. Then
\[ \lim_{n \rightarrow \infty} \frac{i_O^{\pm}(2n,q)}{q^{n^2}} = \prod_{i \geq 1} (1+1/q^{2i-1})^2 .
\]
\end{theorem}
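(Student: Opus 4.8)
The plan is to follow the template established by the proofs of Theorems \ref{GLlimeven}, \ref{GLlimodd}, and \ref{Spoddlim}: convert the exact count of involutions (Lemma \ref{countO}, parts 1 and 2) into a generating function via the ``sum=product'' identities of Section \ref{Identities} (here Theorems \ref{prodO1} and \ref{prodO2}), and then extract asymptotics using the Darboux-type Lemma \ref{poles}. First I would observe that by Lemma \ref{countO}, $i_O^{\pm}(2n,q)/|O^{\pm}(2n,q)|$ is exactly the bracketed double-sum appearing on the left-hand side of Theorem \ref{prodO1} (for the $+$ type) or Theorem \ref{prodO2} (for the $-$ type). Hence $\sum_{n \ge 0} u^n q^{n^2} \, i_O^{\pm}(2n,q)/|O^{\pm}(2n,q)|$ equals the right-hand side of Theorem \ref{prodO1}, respectively \ref{prodO2}. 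Using \eqref{Oevencard}, namely $|O^{\pm}(2n,q)| = \frac{2q^{2n^2}}{q^n \pm 1}(1/q^2;1/q^2)_n$, the coefficient of $u^n$ on the left is $\frac{(q^n \pm 1)}{2 q^{n^2} \prod_{i=1}^n (1-1/q^{2i})} \, i_O^{\pm}(2n,q)$, and since $q^n \pm 1 = q^n(1 \pm q^{-n})$ with $q^{-n} \to 0$, the $n \to \infty$ limit of this coefficient equals $\frac{1}{2 \prod_{i \ge 1}(1-1/q^{2i})} \lim_{n \to \infty} \frac{i_O^{\pm}(2n,q)}{q^{n^2}}$ — provided that limit exists, which the right-hand side analysis will confirm.

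Next I would analyze the right-hand sides of Theorems \ref{prodO1} and \ref{prodO2} as functions of $u$. Both are sums of two terms of the form $\frac{\text{(entire product in }u)}{(1-u^2/q^{2(i-1)}) \text{ product}}$ times constants; the denominator $\prod_{i \ge 1}(1-u^2/q^{2(i-1)})$ has its outermost factor $(1-u^2)$ (the $i=1$ term), giving simple poles at $u = \pm 1$, while the factor $1/(1-uq)$ in the first summand contributes a pole at $u = 1/q$, which lies strictly inside the unit disc and so governs nothing — wait, rather, one must be careful: the radius of convergence is governed by the poles closest to the origin. Here $u = 1/q$ is closer to $0$ than $u = \pm 1$, so it is the dominant singularity. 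However, for the \emph{sum} of the two summands in Theorem \ref{prodO1} (and likewise \ref{prodO2}), one should check whether the pole at $u = 1/q$ actually cancels. Examining the explicit closed form given in the proof of Theorem \ref{prodO1}, namely $\frac{1}{2}\big[\frac{(-u/q;1/q^2)_\infty^2}{(u^2;1/q^2)_\infty} + (1+uq)\frac{(-u;1/q^2)_\infty^2}{(q^2u^2;1/q^2)_\infty}\big]$, one sees the $(1+uq)$ is a numerator factor, not a denominator $1/(1-uq)$, so there is \emph{no} pole at $u=1/q$ after combining; the only singularities on $|u| \le 1$ are the simple poles at $u = \pm 1$ coming from $(u^2;1/q^2)_\infty$ (the $k=0$ factor $(1-u^2)$) and, for the $-$ type, the same. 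Then Lemma \ref{poles} applies with $r = 1$: the limiting coefficient of $u^n$ is $\frac{1}{2}[\text{residue data at }u=1] + \frac{(-1)^n}{2}[\text{residue data at }u=-1] + o(1)$.

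I would then compute the residue contributions explicitly. Writing the $+$-type closed form as $f(u)$, near $u = 1$ we have $(u^2;1/q^2)_\infty = (1-u^2)(u^2/q^2;1/q^2)_\infty$, so the residue term is $\frac{g_1(1)}{1-u}$ with $g_1(1) = \frac{1}{2}\cdot\frac{(-1/q;1/q^2)_\infty^2}{2(1/q^2;1/q^2)_\infty}$ (using $1-u^2 = (1-u)(1+u)$ and evaluating $1+u$ at $u=1$); similarly at $u=-1$. Adding the $u=1$ and $u=-1$ contributions, the terms $\frac{(-u;1/q^2)_\infty^2}{(q^2u^2;1/q^2)_\infty}(1+uq)$ evaluated at $u = \pm 1$ will combine so that the $uq$ part cancels between the two, leaving $\lim \frac{i_O^+(2n,q)}{q^{n^2}} = \frac{(-1/q;1/q^2)_\infty^2}{ (1/q^2;1/q^2)_\infty} \cdot \frac{1}{\text{(correction)}} $; after dividing out $\frac{1}{2\prod(1-1/q^{2i})}$ as above and simplifying, this should reduce to $\prod_{i \ge 1}(1+1/q^{2i-1})^2 = (-1/q;1/q^2)_\infty^2$. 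The $-$-type computation is parallel, with the sign in front of the first product flipping, but since that product's contribution at $u = \pm 1$ turns out to be the \emph{same} for both parities (the $(-u/q;1/q^2)_\infty$ piece is even-ish up to lower order) the final answer for $O^-$ coincides with $O^+$, which is exactly why the theorem states the same limit for both types. The main obstacle I anticipate is the bookkeeping in this last step: correctly pairing the $u=1$ and $u=-1$ residues, tracking which infinite products are even or odd functions of $u$ modulo lower-order corrections, and verifying the $1/q$-pole genuinely cancels in the combined expression (as opposed to in each summand separately) — a sign error there would wrongly predict radius $1/q$ and a different dominant term. Everything else is routine substitution into Lemma \ref{poles} and the arithmetic of $q$-Pochhammer symbols.
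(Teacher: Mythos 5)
Your overall strategy (combine Lemma \ref{countO} with Theorems \ref{prodO1} and \ref{prodO2}, then apply Lemma \ref{poles}) is the right one, but the singularity analysis at the heart of your argument is wrong, and the error is fatal. You claim that after combining the two summands there is no pole at $u=1/q$, because the closed form in the proof of Theorem \ref{prodO1} shows $(1+uq)$ as a numerator factor. But the denominator of that very summand is $(q^2u^2;1/q^2)_\infty$, whose $k=0$ factor is $1-q^2u^2=(1-qu)(1+qu)$; the numerator $(1+uq)$ cancels only the zero at $u=-1/q$, and a genuine simple pole at $u=1/q$ survives. (Equivalently, in the form in which Theorem \ref{prodO1} is stated, $1/(1-uq)$ multiplies a quotient of products that is finite and nonzero at $u=1/q$.) That pole is strictly closer to the origin than your proposed poles at $u=\pm1$, so it is the dominant singularity and it is what actually determines the asymptotics. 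Consistently with this, your treatment of the left-hand side also goes wrong: the coefficient of $u^n$ there is $\frac{(q^n\pm1)\,i_O^{\pm}(2n,q)}{2q^{n^2}\prod_{i=1}^n(1-1/q^{2i})}$, which grows like $q^n\cdot i_O^{\pm}(2n,q)/q^{n^2}$; writing $q^n\pm1=q^n(1\pm q^{-n})$ does not make the factor $q^n$ disappear, so this coefficient has no finite limit and Lemma \ref{poles} with $r=1$ cannot be applied. The two slips are two faces of the same oversight: the $q^n$ growth of the coefficients is precisely the signature of the pole at $u=1/q$, and computing residues at $u=\pm1$ would only capture subdominant terms.

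The repair is the substitution the paper makes before doing any asymptotics: replace $u$ by $u/q$ in Theorems \ref{prodO1} and \ref{prodO2}. This moves the dominant pole from $u=1/q$ to $u=1$, pushes the zeros of $\prod_i(1-u^2/q^{2(i-1)})$ out to $u=\pm q$ (so the second summand becomes analytic on a disc of radius greater than $1$ and contributes nothing in the limit), and simultaneously inserts the factor $q^{-n}$ that makes the left-hand coefficient converge to $\frac{1}{2\prod_i(1-1/q^{2i})}\lim_{n\to\infty} i_O^{\pm}(2n,q)/q^{n^2}$. With a single dominant simple pole at $u=1$, Lemma \ref{poles} yields the residue $\frac{1}{2}\prod_i(1+1/q^{2i-1})^2/\prod_i(1-1/q^{2i})$, and the theorem follows; the $O^-$ case then comes for free, since the summand that changes sign between Theorems \ref{prodO1} and \ref{prodO2} is exactly the one that contributes nothing.
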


\begin{proof} We prove the assertion for $i_O^+(2n,q)$ as the proof for $i_O^-(2n,q)$
is almost identical
(using part 2 of Lemma \ref{countO} instead of part 1 of Lemma \ref{countO},
and Theorem
\ref{prodO2} instead of Theorem \ref{prodO1}).

Replacing $u$ by $u/q$ in Theorem \ref{prodO1}, and applying part 1 of Lemma \ref{countO} gives
that \begin{equation}
\label{bigeq}
\frac{1}{2(1-u)} \frac{\prod_i (1+u/q^{2i-1})^2}{\prod_i (1-u^2/q^{2i})} + \frac{1}{2} \frac{\prod_i (1+u/q^{2i})^2}
{\prod_i (1-u^2/q^{2i})} \end{equation} is equal to
\begin{equation} \label{bigeq2}
\sum_{n \geq 0} \frac{u^n i_O^+(2n,q)}{2(1-1/q^n)(1-1/q^2) \cdots (1-1/q^{2(n-1)}) q^{n^2}}.
\end{equation}

Taking the $n \rightarrow \infty$ limit of the coefficient of $u^n$ in
\eqref{bigeq2} gives
\[
\frac{1}{2 \prod_i (1-1/q^{2i})} \lim_{n \rightarrow \infty}
\frac{i_O^+(2n,q)}{q^{n^2}} .
\]
Consider the $n \rightarrow \infty$ limit of the coefficient of $u^n$ in the
first infinite product in
\eqref{bigeq}; by Lemma \ref{poles}, it is equal to
\[ \frac{1}{2} \frac{\prod_i (1+1/q^{2i-1})^2}{\prod_i (1-1/q^{2i})} .
\]
Since the second term of
\eqref{bigeq} is analytic in a disc of radius bigger than $1$, it follows that
the $n \rightarrow \infty$ limit of the coefficient of $u^n$ in
the second infinite product of
\eqref{bigeq} is equal to $0$.

Summarizing, we have shown that
\[ \frac{1}{2 \prod_i (1-1/q^{2i})} \lim_{n \rightarrow \infty}
\frac{i_O^+(2n,q)}{q^{n^2}} =
 \frac{1}{2} \frac{\prod_i (1+1/q^{2i-1})^2}{\prod_i (1-1/q^{2i})},
\]
which implies the theorem.
\end{proof}

{\it Remark:} The expression in Theorem \ref{mainO1} is equal to $1.9296..$
when $q=3$, and tends to $1$ as $q \rightarrow \infty$.

We next treat the case of odd dimensional orthogonal groups in odd characteristic.

\begin{theorem} \label{mainO2} Let $i_O(2n+1,q)$ be the number of involutions in $O(2n+1,q)$. Let $q$
be odd and fixed. Then
\[ \lim_{n \rightarrow \infty} \frac{i_O(2n+1,q)}{q^{n^2+n}} = 2 \prod_{i \geq 1} (1+1/q^{2i})^2 .\]
\end{theorem}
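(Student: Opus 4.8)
The plan is to follow the same template used for all the previous asymptotic theorems in this section, now combining Lemma~\ref{countO}(3) with Theorem~\ref{identlast}. First I would record that by part 3 of Lemma~\ref{countO}, the number of involutions in $O(2n+1,q)$ equals
\[
|O^+(2n+1,q)| \left[ \sum_{r=0}^{2n+1} \frac{1}{|O^+(r,q)||O^+(2n+1-r,q)|} + \sum_{r=1}^{2n} \frac{1}{|O^-(r,q)||O^-(2n+1-r,q)|} \right],
\]
so that, dividing by $|O^+(2n+1,q)|$ and using \eqref{Ooddcard} in the form $|O^+(2n+1,q)| = 2q^{n^2}\prod_{i=1}^n(q^{2i}-1) = 2q^{n^2+n}(1/q^2;1/q^2)_n$, the bracketed double sum is exactly $i_O(2n+1,q)/(2q^{n^2+n}(1/q^2;1/q^2)_n)$. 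Hence Theorem~\ref{identlast} yields the clean generating-function identity
\[
\sum_{n\ge 0} \frac{u^n \, i_O(2n+1,q)}{2 q^{n^2+n} (1/q^2;1/q^2)_n} = \frac{1}{1-u} \frac{\prod_{i\ge1}(1+u/q^{2i})^2}{\prod_{i\ge1}(1-u^2/q^{2i})}.
\]

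Next I would extract asymptotics in the usual way. Taking the $n\to\infty$ limit of the coefficient of $u^n$ on the left-hand side gives $\frac{1}{2\prod_{i\ge1}(1-1/q^{2i})}\lim_{n\to\infty} \frac{i_O(2n+1,q)}{q^{n^2+n}}$, since $(1/q^2;1/q^2)_n \to \prod_{i\ge1}(1-1/q^{2i})$. On the right-hand side, the factor $\prod_{i\ge1}(1+u/q^{2i})^2 / \prod_{i\ge1}(1-u^2/q^{2i})$ is analytic in a disc of radius greater than $1$ (the closest singularities of the product in the denominator are at $u=\pm q$), so the only relevant singularity is the simple pole at $u=1$ coming from $1/(1-u)$. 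Applying Lemma~\ref{poles} with the single pole $w=1$ and residue factor $g(u) = \prod_{i\ge1}(1+u/q^{2i})^2/\prod_{i\ge1}(1-u^2/q^{2i})$ evaluated at $u=1$, the $n\to\infty$ limit of the coefficient of $u^n$ on the right is $\frac{\prod_{i\ge1}(1+1/q^{2i})^2}{\prod_{i\ge1}(1-1/q^{2i})}$. Equating the two limits and cancelling $\frac{1}{2\prod_{i\ge1}(1-1/q^{2i})}$ gives
\[
\lim_{n\to\infty} \frac{i_O(2n+1,q)}{q^{n^2+n}} = 2\prod_{i\ge1}(1+1/q^{2i})^2,
\]
which is the claimed formula.

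I do not expect a serious obstacle here: the hard analytic content is already packaged in Theorem~\ref{identlast} (whose proof via the four parity cases and Lemma~\ref{ourmainlemma} is the real work) and in the Darboux lemma. The only points requiring a little care are: checking that the index conventions in Lemma~\ref{countO}(3) and Theorem~\ref{identlast} match exactly (they are stated with the same summation ranges $\sum_{r=0}^{2n+1}$ and $\sum_{r=1}^{2n}$, so this is immediate), confirming via \eqref{Oevencard} that the degenerate boundary terms $|O^\pm(0,q)|$, $|O^\pm(1,q)|$ are handled consistently, and verifying that the radius of analyticity of the surviving infinite product genuinely exceeds $1$ so that Lemma~\ref{poles} applies with error $o(1)$. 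None of these is more than a routine check, so the proof is essentially a direct combination of the two quoted results together with Darboux's lemma, exactly parallel to the proof of Theorem~\ref{Spevenlim}.
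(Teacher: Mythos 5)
Your proposal is correct and follows essentially the same route as the paper's proof: it combines part 3 of Lemma \ref{countO} with Theorem \ref{identlast} to obtain the generating-function identity (the paper's equation \eqref{last}, after dividing through by the group order), and then applies Lemma \ref{poles} to the simple pole at $u=1$, the remaining infinite product being analytic on a disc of radius $q>1$. The only blemish is a typo in your restatement of \eqref{Ooddcard} — the order is $2q^{2n^2+n}(1/q^2;1/q^2)_n$, not $2q^{n^2+n}(1/q^2;1/q^2)_n$ — but the generating-function identity you actually display and use is the correct one, so nothing downstream is affected.
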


\begin{proof} It follows from Theorem \ref{identlast} and part 3 of Lemma \ref{countO} that
\begin{equation} \label{last}
\sum_{n \geq 0} u^n q^{n^2} \frac{i_O(2n+1,q)}{|O(2n+1,q)|} =
\frac{1}{1-u} \frac{\prod_i (1+u/q^{2i})^2}{\prod_i (1-u^2/q^{2i})}.
\end{equation}

The $n \rightarrow \infty$ limit of the coefficient of $u^n$ on the left hand side of \eqref{last} is equal to
\[ \frac{1}{2 \prod_i (1-1/q^{2i})} \lim_{n \rightarrow \infty} \frac{i_O(2n+1,q)}{q^{n^2+n}}.\]
By Lemma \ref{poles}, the $n \rightarrow \infty$ limit
of the coefficient of $u^n$ on the right hand side of \eqref{last} is equal to
\[ \frac{\prod_i (1+1/q^{2i})^2}{\prod_i (1-1/q^{2i})} .\]

Comparing these two expressions proves the theorem.
\end{proof}

{\it Remark:} The expression in Theorem \ref{mainO2} is equal to $2.5382..$ when $q=3$, and tends to $2$
as $q \rightarrow \infty$.

Next we give results for even characteristic orthogonal groups.
Note that it is not necessary to treat odd dimensional
orthogonal groups in even characteristic, as these are isomorphic to symplectic groups.

To begin, we derive a formula for the number of involutions in even characteristic orthogonal groups.
Let $G=O^{\epsilon}(2n,q)$ with $q$ a power of $2$.    Let $\nu$ denote
 the quadratic form preserved by $G$ on the natural module $V$ of dimension $2n$.
 Recall that $\nu$ defines by  a nondegenerate $G$-invariant alternating form on $V$
 defined by
 $$
 (v,w): = \nu(v+w) + \nu(v) + \nu(w).
 $$

If $g \in G$ is an involution, then $g \in X = Sp(2n,q)$ with respect to the alternating
form defined above.

So by Lemma \ref{Spinv}, $W:=(g-1)V$ is totally singular with respect to the alternating form
of rank $r$.

We next note:

\begin{lemma}  If $x, g \in G$ are involutions and are conjugate in $X$, then
they are conjugate in $G$.
\end{lemma}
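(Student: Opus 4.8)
The plan is to show that the obstruction to transferring $X$-conjugacy to $G$-conjugacy is controlled by a cohomological/counting argument comparing centralizers in $X = Sp(2n,q)$ and in $G = O^{\epsilon}(2n,q)$. Suppose $x, g \in G$ are involutions with $hxh^{-1} = g$ for some $h \in X$. Then conjugating $g$ by an element of $G$ is allowed, so we may reduce to the case $x = g$ and must show: every element of $X$ conjugating $g$ to an element of $G$ that is again $G$-conjugate to $g$ is accounted for, i.e.\ the $X$-class of $g$ meets $G$ in a single $G$-class. Equivalently, writing $C_X(g)$ and $C_G(g) = C_X(g) \cap G$ for the centralizers, the relevant count is $|C_X(g)| \cdot (\text{number of } G\text{-classes in } g^X \cap G) = |g^X \cap G| \cdot |C_G(g)|$ — a standard orbit-counting identity — so it suffices to verify $|g^X \cap G| \cdot |C_G(g)| = |C_X(g)|$, i.e.\ that $g^X \cap G = g^G$.

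First I would pin down the geometry. By Lemma~\ref{Spinv} applied to $g$ as an element of $X$ with respect to the invariant alternating form $(\,,\,)$, the space $W := (g-1)V$ is totally singular of some rank $r \le n$ with $W^\perp = V^g$. The extra structure coming from $G$ is the quadratic form $\nu$: since $g$ preserves $\nu$, for $v \in V$ one has $\nu(gv) = \nu(v)$, and expanding $\nu((g-1)v) = \nu(gv) + \nu(v) + (gv,v) = (gv,v)$ shows that the function $v \mapsto \nu((g-1)w)$ on $W$ is determined by a quadratic refinement of the alternating pairing between $V^g/W$ and $W$; this is exactly the data distinguishing the finitely many $G$-classes of involutions of rank $r$. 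The key point is then that $X$ acts transitively on pairs (totally singular $r$-space $W$, compatible fixed space), but the stabilizer of such a configuration in $X$ still acts transitively — via $GL(W) \cong GL(r,q)$ on the relevant symmetric/quadratic data — on the $\nu$-refinements that occur for a fixed $G$-class. In other words, once one is inside the parabolic $P_X$ of $X$ stabilizing the flag $0 < W < W^\perp < V$, two involutions of $G$ with the same rank $r$ and the same "type" of refinement are already conjugate in $P_X \cap G \le G$.

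The cleanest route is the matrix model used in the proof of Theorem~\ref{countSpeven}: with respect to an appropriate basis adapted to $0 < W < W^\perp < V$, an involution $g \in X$ has the block form $\begin{pmatrix} I & 0 & h \\ 0 & I & 0 \\ 0 & 0 & I \end{pmatrix}$ with $h$ symmetric of rank $r$, and $P_X = L_X Q_X$ acts on the corner $h$ by congruence $h \mapsto A h A^{\top}$ through $L_X \supseteq GL(r,q)$. The constraint of lying in $G = O^\epsilon(2n,q)$ cuts $P_X$ down to $P_G := P_X \cap G$, but — and this is the crux — the $GL(r,q)$-congruence action is already available inside $L_G$: the Levi of $P_G$ contains the full $GL(W) = GL(r,q)$ (it acts on the totally singular summand $W$ and on its dual, with nothing to preserve on a totally singular space), so the congruence orbits of corner matrices $h$ are the same whether computed in $P_X$ or in $P_G$. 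Since Theorem~\ref{countSpeven}'s analysis shows $X$-conjugacy of two such involutions is equivalent to congruence of their corners (same rank, and same skew/non-skew type when $r$ is even), and that equivalence is realized inside $P_G \le G$, we conclude $g^X \cap G = g^G$.

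The main obstacle I anticipate is the bookkeeping at the Levi level: one must check carefully that $L_G$, the Levi of the $G$-parabolic stabilizing $0 < W < W^\perp < V$, really contains all of $GL(W) \cong GL(r,q)$ and not merely an orthogonal or symplectic subgroup of it — this is true precisely because $W$ is totally singular for $\nu$ (so there is no form on $W$ to be preserved, only the pairing between $W$ and a complement to $W$ in $V^g$, which $GL(W)$ can move freely with a compensating dual action). Once that structural fact is in hand, and one invokes the classification of symmetric forms over a perfect field of characteristic $2$ up to congruence exactly as in Theorem~\ref{countSpeven}, the equality $g^X \cap G = g^G$ — hence the lemma — follows. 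A subtle point to double-check is the even-rank case, where the skew/non-skew dichotomy of $h$ is an honest invariant and one must confirm that this invariant is unchanged when $h$ is reinterpreted via $\nu$ rather than just $(\,,\,)$; this amounts to noting that the $GL(r,q)$-orbit structure on the corner is intrinsic to the rank and the alternating-versus-not alternating distinction, both of which are visible already in $X$.
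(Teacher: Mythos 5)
Your strategy---move both involutions onto a common flag $0<W<W^{\perp}<V$ and realize the full $GL(r,q)$-congruence action on the corner matrix $h$ inside $G$---founders on exactly the claim you flag as the crux: that the Levi of the stabilizer in $G$ of this flag contains all of $GL(W)\cong GL(r,q)$ ``because $W$ is totally singular for $\nu$.'' That premise is false for the non-alternating classes. Writing $w=(g-1)v$ and using that $g$ preserves $\nu$, one gets $\nu(w)=\nu(gv+v)=\nu(gv)+\nu(v)+(gv,v)=(w,v)$, which is nonzero in general: already for an orthogonal transvection $v\mapsto v+\nu(u)^{-1}(v,u)u$ one has $W=\langle u\rangle$ with $\nu(u)\neq 0$. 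So $W=(g-1)V$ is totally isotropic for the alternating form but \emph{not} totally singular for the quadratic form whenever $h$ is non-alternating; the restriction $\nu|_{W}$ is then a nonzero Frobenius-semilinear functional on $W$, every element of the stabilizer of $W$ in $G$ must preserve it, and the image of that stabilizer in $GL(W)$ is a \emph{proper} subgroup of $GL(r,q)$ (trivial when $r=1$). Moreover, membership of the block matrix in $G$ imposes an extra compatibility between $h$ and $\nu|_{W}$ (namely $\nu|_{W}\circ h$ equals the diagonal quadratic functional of $h$), so ``the corners are congruent under $GL(r,q)$'' does not by itself produce a conjugating element of $G$: you would still have to show that the subgroup of $GL(W)$ preserving $\nu|_{W}$, together with the unipotent radical of the stabilizer, acts transitively on the compatible corners within each congruence class. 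That is a genuine additional argument, not bookkeeping; a symptom that the interaction with $\nu$ is not vacuous is that the alternating class of rank $n$ meets $O^{+}(2n,q)$ but misses $O^{-}(2n,q)$ entirely.

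The paper sidesteps all of this by induction on $\dim V$: for each kind of involution (transvection, non-alternating of rank at least $2$, alternating) it exhibits a $g$-invariant nondegenerate $2$- or $4$-dimensional subspace of either orthogonal type on which the action is determined by the $X$-class, and peels it off; the only delicate case is the alternating class of rank $n$, where every invariant $4$-dimensional block is forced to have type $+$ (since such elements do not lie in $O^{-}(4,q)\cong SL(2,q^{2}).2$). If you want to salvage the parabolic route, you would need (i) Witt's extension theorem for the quadratic form to move $(x-1)V$ onto $(g-1)V$ together with the functional $\nu|_{W}$, and (ii) the transitivity statement above for the stabilizer of the pair $(W,\nu|_{W})$; neither is supplied, and (ii) is where the real content of the lemma lives.
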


\begin{proof}
If $2n=2$ or $4$, this is clear.   So assume that $2n \ge 6$.

If $g$ is a transvection in $X$, then we see that $g$ is trivial on nondegenerate $2$-spaces
of either type (and so is $x$), whence the result follows by induction.

Suppose next that $g$ corresponds to a symmetric element in $X$ of rank at least $2$.
Then $g$ leaves invariant a nondegenerate $4$-space where it has $2$ Jordan blocks
of size $2$  with each nondegenerate.  Working in $Sp(4,q)$, we see that $g$ preserves
nondegenerate $2$-spaces of either type (where it acts nontrivially) and so the result
follows by induction.

Suppose that $g$ corresponds to a skew element in $X$.
We see from the section on even characteristic symplectic groups that $V$ is an orthogonal direct sum of $4$-dimensional
subspaces and one space where $g$ acts trivially.   If the last space is actually there,
we see by reducing to the case that $2n=6$, that $g$ acts trivially on $2$-dimensional
invariant subspaces of both types, whence the result follows by induction.  So the remaining
case is that all Jordan blocks have size $2$.   It follows that on each $4$-dimensional block
above, the type must be $+$ (such elements do not live in $O^{-}(4,q)$, which is isomorphic to
$SL(2,q^2).2$, the semidirect product of $SL(2,q^2)$ with a group of order 2 whose generator
induces the field automorphism on $SL(2,q^2)$ given by the $q$th power map on the field of
size $q^2$), whence the type of $G$ must be $+$ as well and the result is clear in this case.
\end{proof}

It also follows
from the argument above that each class in $X$ occurs in $G$ unless $g$ corresponds to
an alternating element in $X$ of rank $n$ (and so $n$ is even).  In the latter case,
$g$ can only be in $O^+(2n,q)$. In the proof of Theorem \ref{countSpeven}, the conjugacy
 classes of involutions in $Sp$ were called $C(r)$
and $D(r)$.  Let $C^{\epsilon}(r)$ and $D^{\epsilon}(r)$ denote the intersections with
$G$ (of type $\epsilon$).  By the previous lemma, this is either empty or a conjugacy
class in $G$.

Now we have to compute centralizers.

As noted if $g$ is in $C(r)$ with $r$ even, then $g$ is conjugate to an element of $G$ unless
$r =n$ and $\epsilon = -$.

Then  $g$ may be written as:
 $$
  \begin{pmatrix}     I   &  0  &  h  \\
                                  0 &  I    & 0    \\
                                  0 & 0 &   I
                                  \end{pmatrix},
                                  $$ where $h$ is skew of rank $r$.

We  see that  $g \in G_0: = SO^{\epsilon}(2n,q)$
and
$$
|C_{G}(g)|= |Sp(r,q)| |O^{\epsilon}(2n-2r,q)|q^{r(2n-2r)}q^{r(r-1)/2}.
$$

Note that if $r = n$, then the $O$ part of the centralizer is not there and $\epsilon = +$.
Note also that if $r < n$, the $G$ and $G_0$ classes are the same (because $g$ commutes
with a transvection in $G$)  while if $r=n$,
there is a single $G$-class that splits into $2$ $G_0$ classes (corresponding to the classes
of maximal totally isotropic subspaces).

We can compare  $|C^{\epsilon}(r)|$ and $|C(r)|$.   We have
$$
|C^{\epsilon}(r)|  =  [G:C_G(g)] =  \frac{|G| |C_X(g)|}{|X| |C_G(g)|}  |C(r)| = \frac{q^{n-r}  + \epsilon 1}{q^n + \epsilon 1}  |C(r)|.
$$

The above formula again shows that $|C^-(n)|=0$.

Next we consider the classes $D(r)$ with $1 \le r \le n$.   Note that for any such
element we can write $V$ as an orthogonal sum of nondegenerate
$2$-dimensional invariant spaces.   This implies that the $G$-classes and
$G_0$ classes coincide (since any such element commutes with a transvection
which is in $G$ but not in $G_0$).

Consider $X$ acting on the orthogonal module of dimension $2n+1$ (this is
an indecomposable $X$-module with a $1$-dimensional socle).   The stabilizers
of the complements to the $1$-dimensional socle are precisely the orthogonal
groups contained in $X$.  Of course, the number of complements is $q^{2n}$.
If $g \in D(r)$, then the number of $g$-invariant complements in $q^{2n-r}$.
Thus, $g$ lives in a total of $q^{2n-r}$ different orthogonal subgroups of $X$.

\begin{lemma}  Suppose that    $g \in D(r) \subset X$.
Then $g$  is in $q^{2n-r}/2$ orthogonal subgroups of $X$ of each type.
\end{lemma}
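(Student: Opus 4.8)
The plan is to translate the statement into a count of $g$-invariant quadratic forms and then reduce it to a two-dimensional computation. As recalled just above, the orthogonal subgroups of $X=Sp(2n,q)$ containing $g$ correspond bijectively, compatibly with type, to the $g$-invariant quadratic forms $Q$ on the natural module $V$ that polarize to the fixed alternating form $(\cdot,\cdot)$, and there are $q^{2n-r}$ of them in all. Since $g\in D(r)$, the structural description used in the proof of Theorem~\ref{countSpeven} lets us write $V=L_1\perp\cdots\perp L_n$ as an orthogonal (for $(\cdot,\cdot)$) sum of nondegenerate $2$-spaces on which $g$ acts as a transvection for $1\le i\le r$ and trivially for $r<i\le n$. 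Because $Q(v+w)=Q(v)+Q(w)$ whenever $v\perp w$, a polarizing form $Q$ on $V$ is the same data as a tuple $(Q_i)$ of polarizing forms on the $L_i$; its type is $\prod_i\mathrm{type}(Q_i)$ (additivity of the Arf invariant over orthogonal sums); and $Q$ is $g$-invariant iff every $Q_i$ is $g|_{L_i}$-invariant. Hence the counts we want factor as products over $i$.

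Next I would settle the base case on a single $L=\langle e,f\rangle$ with $(e,f)=1$. Every polarizing form on $L$ is $Q=\alpha x^2+\beta y^2+xy$ in the coordinates $v=xe+yf$, and (all symplectic transvections of $L$ being conjugate in $Sp(2,q)$ for $q$ even) we may take $g_L$ with $g_L(e)=e$ and $g_L(f)=f+e$. Substituting $g_L(v)=(x+y)e+yf$ gives $(Q\circ g_L)(v)=\alpha x^2+(\alpha+\beta+1)y^2+xy$, so $Q\circ g_L=Q$ forces $\alpha=1$: there are exactly $q$ invariant forms, parametrized by $\beta\in k$. The Arf invariant of $x^2+\beta y^2+xy$ is the class of $\beta$ in $k/\wp(k)$, where $\wp(t)=t^2+t$; since $\wp$ is additive with kernel $\mathbb{F}_2$, its image has index $2$, so precisely $q/2$ of these forms are of $+$ type and $q/2$ of $-$ type. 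For $i>r$, where $g$ acts trivially on $L_i$, all $q^2$ polarizing forms on $L_i$ are $g$-invariant.

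Finally I would assemble the pieces. On $U:=L_1\perp\cdots\perp L_r$, a $g$-invariant polarizing form of prescribed type $\epsilon$ is obtained by choosing one of the $q/2$ invariant forms on each $L_i$ ($i\le r$) with the product of their types equal to $\epsilon$; there are $2^{r-1}$ admissible sign patterns, so the number of such forms is $2^{r-1}(q/2)^r=q^r/2$, the same for $\epsilon=+$ and $\epsilon=-$. Writing $V=U\perp U'$ with $U'=L_{r+1}\perp\cdots\perp L_n$ of dimension $2(n-r)$, a $g$-invariant polarizing form on $V$ is a $g$-invariant one on $U$ together with an arbitrary one on $U'$ (there are $q^{2(n-r)}$ of the latter), and the type multiplies; since the two counts on $U$ agree, the number of $g$-invariant polarizing forms on $V$ of $+$ type is $(q^r/2)\cdot q^{2(n-r)}=q^{2n-r}/2$, and likewise for $-$ type (consistently summing to $q^{2n-r}$). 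Equivalently, $g$ lies in $q^{2n-r}/2$ orthogonal subgroups of $X$ of each type, as claimed. The main obstacle is the two-dimensional computation itself — correctly pinning down the $g_L$-invariant forms and their Arf invariants — together with checking that the orthogonal decomposition $V=\bigoplus_i L_i$ with $g$ of the stated shape really is available for every element of $D(r)$, which is exactly the structural fact recorded above; everything after that is routine multiplicativity of counts.
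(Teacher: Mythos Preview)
Your argument is correct and follows essentially the same route as the paper: decompose $V$ as an orthogonal sum of nondegenerate $2$-spaces on which $g$ acts as a transvection together with a trivial-action summand, factor a polarizing quadratic form accordingly, and conclude that the two types occur equally often among the $q^{2n-r}$ $g$-invariant forms. The paper compresses the last step to a single ``clearly, there are the same number of quadratic forms of each type,'' whereas you supply the missing justification via the explicit $2$-dimensional Arf computation and the balancing trick (equal counts on the active part $U$ make the type distribution on $U'$ irrelevant); this is a welcome elaboration rather than a different method.
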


\begin{proof}  We can decompose $V = V_0 \perp V_1 \perp \ldots \perp V_r$
where each $V_i,  i > 0$ is a nondegenerate $2$-dimensional space on which
$g$ acts nontrivially and $g$ acts trivially on $V_0$.  Any quadratic form giving
rise to the given alternating form is determined by its restrictions to each $V_i$.
Clearly, there are the same number of quadratic forms of each type.
\end{proof}

Now we can compute $|D^{\epsilon}(r) |$.
Let $g \in D^{\epsilon}(r)$.

Let $\Omega$ be the set of orthogonal subgroups of $X$ of type $\epsilon$.
Since any two orthogonal subgroups of the same type are conjugate in $X$ and
their own normalizers (indeed, they are almost always maximal), we see
that
$$
|\Omega|= [X:O^{\epsilon}(2n,q)] =  q^n(q^n + \epsilon 1)/2.
$$

Let $\Omega(g)$ denote those elements of $\Omega$ containing $g$
(so this is the fixed point set of $g$ acting on $\Omega$).  By the previous lemma
$|\Omega(g)|=q^{2n-r}/2$.
An elementary formula (essentially coming from Burnside's Lemma) shows
that

$$
|\Omega(g)||D(r)| = |\Omega||D^{\epsilon}(r)|.
$$

Thus,
$$
|D^{\epsilon}(r)| =   \frac{ (1/2)q^{2n-r} } {|\Omega|}   |D(r)|  = \frac{ q^{2n-r} } {q^n(q^n + \epsilon 1)}   |D(r)| ,
$$
and so
$$
|D^{\epsilon}(r) | = \frac{q^{n-r} }{q^n + \epsilon 1} |D(r)|.
$$

Of course $|D(r)|$ was computed in the proof of Theorem \ref{countSpeven}.

Summarizing, we have proved the following theorem.

\begin{theorem} \label{countOeven} Let $q$ be even.
\begin{enumerate}
\item The number of involutions in $O^+(2n,q)$ is equal to

\[  \sum_{r=0 \atop r \ even}^n \frac{|O^+(2n,q)|}{A_r} + \sum_{r=1 \atop r \ even}^n \frac{|O^+(2n,q)|}{B_r} + \sum_{r=1 \atop r \ odd}^n \frac{|O^+(2n,q)|}{C_r}  \] where

\[ A_r = q^{r(r-1)/2+r(2n-2r)} |Sp(r,q)| |O^+(2n-2r,q)| \]

\[ B_r = 2q^{r(r+1)/2+(r-1)(2n-2r)-1} |Sp(r-2,q)| |Sp(2n-2r,q)| \]

\[ C_r = 2q^{r(r-1)/2+(r-1)(2n-2r)} |Sp(r-1,q)| |Sp(2n-2r,q)|. \]

\item The number of involutions in $O^-(2n,q)$ is equal to

\[ \sum_{r=0 \atop r \ even}^{n-1} \frac{|O^-(2n,q)|}{A_r} + \sum_{r=1 \atop r \ even}^n \frac{|O^-(2n,q)|}{B_r} + \sum_{r=1 \atop r \ odd}^n \frac{|O^-(2n,q)|}{C_r} \] where

\[ A_r = q^{r(r-1)/2+r(2n-2r)} |Sp(r,q)| |O^-(2n-2r,q)| \]

\[ B_r = 2q^{r(r+1)/2+(r-1)(2n-2r)-1} |Sp(r-2,q)| |Sp(2n-2r,q)| \]

\[ C_r = 2q^{r(r-1)/2+(r-1)(2n-2r)} |Sp(r-1,q)| |Sp(2n-2r,q)|. \]

\end{enumerate} Note that in the $A_r$ sum in part 2, $r$ only ranges from $0$ to $n-1$, and that the values of $B_r$
and $C_r$ are the same in parts 1 and 2 of the theorem.
\end{theorem}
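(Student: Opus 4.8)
The plan is to convert the structural analysis carried out just above into an explicit count, via the orbit--stabilizer formula: the number of elements in the $G$-conjugacy class of an involution $g$ is $|G|/|C_G(g)|$, so the total number of involutions in $G = O^{\epsilon}(2n,q)$ is $\sum_g |O^{\epsilon}(2n,q)|/|C_G(g)|$, the sum running over one representative from each class. We have already shown that these classes are exactly the nonempty sets $C^{\epsilon}(r)$ (for even $r$ with $0 \le r \le n$) and $D^{\epsilon}(r) = D(r)$ (for $1 \le r \le n$), that $C^{-}(n)$ is empty, and that every other intersection of an $X = Sp(2n,q)$-class of involutions with $G$ is a single $G$-class. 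So it remains only to put each $|C_G(g)|$ into the shape $A_r$, $B_r$, or $C_r$.

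First I would handle the classes $C^{\epsilon}(r)$ with $r$ even. Here the preceding discussion already supplies $|C_G(g)| = |Sp(r,q)|\,|O^{\epsilon}(2n-2r,q)|\,q^{r(2n-2r)}\,q^{r(r-1)/2}$, which is literally the stated $A_r$; when $\epsilon = -$ the putative term at $r = n$ contributes nothing because $1/|O^{-}(0,q)| = 0$ by \eqref{Oevencard}, which is precisely why the $A_r$-sum in part 2 runs only to $n-1$. Next, for the classes $D^{\epsilon}(r)$ I would chain together the three facts already in hand: $|D^{\epsilon}(r)| = q^{n-r}|D(r)|/(q^n + \epsilon 1)$, $|D(r)| = |X|/|C_X(g)|$, and the elementary identity $|O^{\epsilon}(2n,q)|(q^n + \epsilon 1)/|Sp(2n,q)| = 2q^{-n}$ (immediate from \eqref{spcard} and \eqref{Oevencard}). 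Together these give $|C_G(g)| = 2\,|C_X(g)|/q^{2n-r}$. Now substitute the $Sp$-centralizer orders from Theorem \ref{countSpeven}: $|C_X(g)| = q^{r(r+1)/2 + r(2n-2r)}\,q^{r-1}\,|Sp(r-2,q)|\,|Sp(2n-2r,q)|$ for $r$ even and $|C_X(g)| = q^{r(r+1)/2 + r(2n-2r)}\,|Sp(r-1,q)|\,|Sp(2n-2r,q)|$ for $r$ odd. After collecting powers of $q$ one obtains exactly $B_r$ for even $r$ and $C_r$ for odd $r$.

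Assembling: the $C^{\epsilon}(r)$ classes produce the $A_r$-sum, the even-$r$ classes $D^{\epsilon}(r)$ the $B_r$-sum, and the odd-$r$ classes $D^{\epsilon}(r)$ the $C_r$-sum, over the ranges indicated (with the lone $r = n$ correction for type $-$). The only real work is the exponent bookkeeping in the last step --- for instance verifying that $r(r+1)/2 + r(2n-2r) + 2r - 1 - 2n$ equals $r(r+1)/2 + (r-1)(2n-2r) - 1$, and the analogous identity $r(r+1)/2 + r(2n-2r) + r - 2n = r(r-1)/2 + (r-1)(2n-2r)$ in the odd case --- together with making sure the subtle class-existence points established in the preceding lemmas (the single $G$-class at $r = n$ that splits into two $SO$-classes, and the vanishing of $C^{-}(n)$) are recorded in the summation ranges rather than mishandled. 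I expect this bookkeeping, not any new idea, to be the main obstacle.
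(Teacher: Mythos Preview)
Your proposal is correct and follows essentially the same route as the paper: you invoke the class analysis and the formulas $|C_G(g)|=A_r$ for $C^{\epsilon}(r)$ and $|D^{\epsilon}(r)|=q^{n-r}|D(r)|/(q^n+\epsilon 1)$ for $D^{\epsilon}(r)$ already established in the preceding discussion, and then do the exponent bookkeeping (which the paper leaves to the reader under ``Summarizing, we have proved\ldots'') to rewrite the class sizes as $|O^{\epsilon}(2n,q)|/B_r$ and $|O^{\epsilon}(2n,q)|/C_r$. Your verification of the two exponent identities and of the $r=n$ vanishing via $1/|O^{-}(0,q)|=0$ is exactly what is needed to close the gap.
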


This leads to the following result.

\begin{theorem} \label{Oevenmain} Let $i_O^{\pm}(2n,q)$ denote the number of involutions in $O^{\pm}(2n,q)$. Let $q$
be even and fixed. Then
\[ \lim_{n \rightarrow \infty} \frac{i_O^{\pm}(2n,q)}{q^{n^2}} = \prod_{i \geq 1} (1+1/q^{2i-1}). \]
\end{theorem}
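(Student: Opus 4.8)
The plan is to mirror the proof of Theorem~\ref{mainO1}, replacing Lemma~\ref{countO} and Theorem~\ref{prodO1} by their even-characteristic counterparts Theorem~\ref{countOeven} and Theorem~\ref{GuralsumO+}. I treat $i_O^+(2n,q)$ in detail; the argument for $i_O^-(2n,q)$ is the same. First I would observe that the quantities $A_r$, $B_r$, $C_r$ and the parity-restricted summation ranges appearing in part~1 of Theorem~\ref{countOeven} are literally those of Theorem~\ref{GuralsumO+}. Hence, dividing the formula of Theorem~\ref{countOeven} by $|O^+(2n,q)|$ and summing against $u^n q^{n^2}$, Theorem~\ref{GuralsumO+} gives, for $|u|<1/q$,
\[ \sum_{n\ge 0} u^n q^{n^2} \frac{i_O^+(2n,q)}{|O^+(2n,q)|} = \frac{1}{2(1-uq)}\frac{\prod_{i\ge 1}(1+u/q^{2(i-1)})}{\prod_{i\ge 1}(1-u^2/q^{2(i-1)})} + \frac{1}{2}\frac{\prod_{i\ge 1}(1+u/q^{2i-1})}{\prod_{i\ge 1}(1-u^2/q^{2(i-1)})}. \]
I would then replace $u$ by $u/q$, as in the proof of Theorem~\ref{mainO1}, turning this into an identity valid for $|u|<1$ whose right-hand side is
\[ \frac{1}{2(1-u)}\frac{\prod_{i\ge 1}(1+u/q^{2i-1})}{\prod_{i\ge 1}(1-u^2/q^{2i})} + \frac{1}{2}\frac{\prod_{i\ge 1}(1+u/q^{2i})}{\prod_{i\ge 1}(1-u^2/q^{2i})}. \]

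Next I would use \eqref{Oevencard}, namely $|O^+(2n,q)| = \tfrac{2q^{2n^2}}{q^n+1}(1/q^2;1/q^2)_n$, to rewrite the coefficient of $u^n$ on the left-hand side of the substituted identity as $\dfrac{i_O^+(2n,q)(1+q^{-n})}{2q^{n^2}(1/q^2;1/q^2)_n}$, exactly as in the passage to \eqref{bigeq2}. Since $1+q^{-n}\to 1$ and $(1/q^2;1/q^2)_n\to(1/q^2;1/q^2)_\infty$, the $n\to\infty$ limit of this coefficient is $\dfrac{1}{2(1/q^2;1/q^2)_\infty}\lim_{n\to\infty}\dfrac{i_O^+(2n,q)}{q^{n^2}}$. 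On the right-hand side, the second summand is analytic in the disc $|u|<q$, so by Lemma~\ref{poles} it contributes $o(1)$ to the coefficient of $u^n$; the first summand is analytic in $|u|<1$ and has on $|u|=1$ the single simple pole $u=1$, so writing it as $\tilde g(u)/(1-u)$ with $\tilde g(u)=\tfrac12\,\prod_{i\ge1}(1+u/q^{2i-1})/\prod_{i\ge1}(1-u^2/q^{2i})$ (analytic near $u=1$ because $1-q^{-2i}\ne 0$), Lemma~\ref{poles} gives that the coefficient of $u^n$ tends to $\tilde g(1)=\tfrac12\,\prod_{i\ge1}(1+1/q^{2i-1})/(1/q^2;1/q^2)_\infty$. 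Equating the two limits and cancelling $1/(2(1/q^2;1/q^2)_\infty)$ yields $\lim_{n\to\infty} i_O^+(2n,q)/q^{n^2}=\prod_{i\ge1}(1+1/q^{2i-1})$.

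For $i_O^-(2n,q)$ one repeats the argument verbatim with part~2 of Theorem~\ref{countOeven} and Theorem~\ref{GuralsumO-} in place of their $O^+$ analogues (using the remark that the $A_r$-sum may be extended to $r=n$ since $1/|O^-(0,q)|=0$). The only differences are that $|O^-(2n,q)|$ carries the factor $\tfrac1{q^n-1}$, so the relevant prefactor is $1-q^{-n}$, which still tends to $1$, and that the second summand on the right-hand side appears with a minus sign, which is immaterial since that summand is still analytic in a disc of radius $>1$ and hence contributes nothing to the limiting coefficient. Thus the same limit $\prod_{i\ge1}(1+1/q^{2i-1})$ results. The main obstacle here is not conceptual but bookkeeping: one must verify that the triples $(A_r,B_r,C_r)$ and the parity ranges of Theorem~\ref{countOeven} coincide exactly with those of Theorems~\ref{GuralsumO+}/\ref{GuralsumO-}, and carefully track the $q^n\pm1$ normalization together with the radii of analyticity so that Darboux's Lemma~\ref{poles} is applied to the unique dominant pole at $u=1$.
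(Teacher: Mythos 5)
Your proposal is correct and follows essentially the same route as the paper's own proof: combine Theorem~\ref{countOeven} with Theorems~\ref{GuralsumO+} and \ref{GuralsumO-}, substitute $u\mapsto u/q$, and apply Lemma~\ref{poles} to the sole dominant pole at $u=1$ while the second summand (analytic in a disc of radius $q>1$) contributes nothing, exactly as in the proof of Theorem~\ref{mainO1}. The only cosmetic difference is that you write the coefficient of $u^n$ as $i_O^{\pm}(2n,q)(1\pm q^{-n})/\bigl(2q^{n^2}(1/q^2;1/q^2)_n\bigr)$ rather than the paper's equivalent form, and both give the same limit.
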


\begin{proof} We prove the assertion for $i_O^+(2n,q)$ as the proof for $i_O^-(2n,q)$ is almost identical (using part
2 of Theorem \ref{countOeven} instead of part 1 of Theorem \ref{countOeven}, and Theorem \ref{GuralsumO-} instead of
Theorem \ref{GuralsumO+}).

Replacing $u$ by $u/q$ in Theorem \ref{GuralsumO+} and applying Theorem \ref{countOeven} gives that
\begin{equation} \label{first}
\frac{1}{2(1-u)} \frac{\prod_i (1+u/q^{2i-1})}{\prod_i (1-u^2/q^{2i})} + \frac{1}{2} \frac{\prod_i (1+u/q^{2i})}{\prod_i (1-u^2/q^{2i})}
\end{equation} is equal to
\begin{equation} \label{second}
\sum_{n \geq 0} \frac{u^n i_O^+(2n,q)}{2(1-1/q^n)(1-1/q^2) \cdots (1-1/q^{2(n-1)}) q^{n^2}}.
\end{equation}

Taking the $n \rightarrow \infty$ limit of the coefficient of $u^n$ in \eqref{second} gives
\[ \frac{1}{2 \prod_i (1-1/q^{2i})} \lim_{n \rightarrow \infty} \frac{i_O^+(2n,q)}{q^{n^2}}. \]
Consider the $n \rightarrow \infty$ limit of the coefficient of $u^n$ in the first infinite
product in \eqref{first}. By Lemma \ref{poles}, it is equal to \[ \frac{1}{2} \frac{\prod_i (1+1/q^{2i-1})}
{\prod_i (1-1/q^{2i})}.\] Since the second term of \eqref{first} is analytic in a disc of radius bigger than 1,
it follows that the $n \rightarrow \infty$ limit of the coefficient of $u^n$ in the second infinite
product of \eqref{first} is equal to $0$.

Summarizing, we have shown that
\[ \frac{1}{2 \prod_i (1-1/q^{2i})} \lim_{n \rightarrow \infty} \frac{i_O^+(2n,q)}{q^{n^2}}
= \frac{1}{2} \frac{\prod_i (1+1/q^{2i-1})}{\prod_i (1-1/q^{2i})}, \] which implies the theorem.
\end{proof}

{\it Remark:} The expression in Theorem \ref{Oevenmain} is equal to $1.7583..$ when $q=2$,
and tends to $1$ as $q \rightarrow \infty$.

\section{Acknowledgements} Fulman was partially supported by Simons Foundation Grant 400528.
Guralnick was partially supported by NSF grants DMS-1302886 and DMS-1600056. Stanton was partially
supported by NSF grant DMS-1148634. We are very grateful to Geoff Robinson for helpful
correspondence, and thank Ryan Vinroot for some references. We thank the referee for comments.

\end{document}